\def\carac#1,#2{
\left[
\begin{smallmatrix}
#1 \\ #2
\end{smallmatrix}
\right]
}
\newtheorem{theorem}{Theorem}[section]
\newtheorem{lemma}[theorem]{Lemma}
\newtheorem{proposition}[theorem]{Proposition}
\newtheorem{corollary}[theorem]{Corollary}
\newtheorem{remark}[theorem]{Remark}
\newcommand{\Si}{\mathbb H} 
\newcommand{\C}{\mathbb C} 
\newcommand{\N}{\mathbb N}
\newcommand{\Z}{\mathbb Z}
\newcommand{\Q}{\mathbb Q}
\newcommand{\proj}{\mathbb P}
\newcommand{\F}{\mathbb F}
\newcommand{\End}{\mathop{\mathrm{End}}}
\newcommand{\Norm}{\mathop{\mathrm{N}}\nolimits}
\newcommand{\hsp}{\hspace{5pt}}
\newcommand{\spec}[1]{ \mathrm{Sp} (#1)}
\newcommand{\mspec}{ \mathrm{Spec}}
\newcommand{\xa}{\mathbb{A}}
\newcommand{\xz}{\mathbb{Z}}
\newcommand{\xf}{\mathbb{F}}
\newcommand{\xg}{\mathbb{G}}
\newcommand{\mat}[2]{\mbox{Mat}_{#1}(#2)}
\newcommand{\pol}{\mathscr{L}}
\newcommand{\emm}{\mathcal{M}}
\newcommand{\Sp}{\mathrm{Sp}}
\newcommand{\gtheta}{\mathscr{G}}
\title{A $p$-adic quasi-quadratic
point counting algorithm}
\author{Robert Carls, David Lubicz}
\begin{document}

\maketitle

{\small 
\begin{center}
\begin{tabular}{l@{\hspace{0.5cm}}l}
Robert Carls & David Lubicz \\
\texttt{robert.carls@uni-ulm.de} & 
\texttt{david.lubicz@univ-rennes1.fr}\\
Institute of Pure Mathematics & CELAR \\
University of Ulm & BP 7419 35174 Bruz Cedex\\
D-89069 Ulm, Germany & France \\
\end{tabular}
\end{center}
}

\bigskip
\begin{abstract}
\noindent
In this article we give an algorithm for the computation of the number
of rational points on the Jacobian variety of a generic ordinary
hyperelliptic curve defined over a finite field $\F_q$
of cardinality $q$
with time complexity $O(n^{2+o(1)})$ and space complexity
$O(n^2)$, where $n=\log(q)$.  In the latter complexity estimate the
genus and the characteristic are assumed as fixed.
Our algorithm forms a generalization of both, the AGM algorithm of J.-F.
Mestre and the canonical lifting method of T.  Satoh.  We canonically
lift a certain arithmetic invariant of the Jacobian of the
hyperelliptic curve in terms of theta constants. The
theta null values are computed with respect to a semi-canonical theta
structure of level $2^\nu p$ where $\nu
>0$ is an integer and $p=\mathrm{char}(\F_q)>2$. The
results of this paper suggest a global positive answer to the question
whether there exists a quasi-quadratic time algorithm for the
computation of the number of rational points on a generic ordinary abelian
variety defined over a finite field.
\\
\\
\textbf{Keywords:} point counting algorithm, canonical lift, theta
function, p-adic method, CM construction.
\end{abstract}

\section{Introduction}
\label{intro}

The study of the properties of non-singular projective algebraic
curves over finite fields is a subject of central importance
in algorithmic number theory and cryptography.
It is well established that the Jacobian varieties of such curves
constitute a suitable family of groups to be used
in cryptographic protocols which are based upon the difficulty of solving
the \emph{discrete logarithm problem}. In order to avoid 'weak' Jacobians, i.e.
Jacobian varieties which
give a trivial instance of the general discrete logarithm problem, it
is necessary to precompute the number of rational points on a given
Jacobian.  This issue has prompted a lot of research, focused on the
design of efficient point counting algorithms.

Next we briefly recall how one can count points by
computing the eigenvalues of the absolute Frobenius
endomorphism on a Jacobian variety. We denote by $\F_q$ a
finite field with $q$ elements. Let
$\Sigma$ be the $q$-th power Frobenius morphism acting on the
algebraic closure $\overline{\F}_q$ of $\F_q$.  Let $C$ be a
smooth projective curve of genus $g$ over $\F_q$,
and let $J(C)$ be its Jacobian. For a prime number $\ell$ not dividing $q$
we denote by $T_\ell$ the $\ell$-adic Tate module of $J(C)$. The latter is a free
$\Z_\ell$-module of rank $2g$. Here $\Z_\ell$ stands for the
$\ell$-adic integers. Let $\End(J(C))$ be the ring of
endomorphisms of $J(C)$ and put
$\End^0(J(C))=\End(J(C))\otimes \Q$.
There exists a canonical injective morphism
$\rho_{\ell}:
\End^0(J(C)) \rightarrow
\End_{\Q_\ell} ( T_\ell \otimes \Q_\ell )$ which is called the
\emph{$\ell$-adic representation} of $\End^0(J(C))$. Let $F$ be the
purely inseparable endomorphism of degree $q^g$ of $J(C)$ given by the
action of $\Sigma$ on geometric point coordinates $(x_1,\ldots, x_n)
\mapsto (x_1^q,\ldots, x_n^q)$. One would like to compute, in an
efficient way, the characteristic polynomial $\chi_F$ of
$\rho_\ell(F)$.  One recovers the number of rational points of the
Jacobian $J(C)$ as $\chi_F(1)$.

Broadly speaking, there exists two classes of point counting
algorithms. On one hand, there are the so-called \emph{$\ell$-adic
  algorithms} initiated by the work of R. Schoof~\cite{Schoof2}. These
algorithms compute the action of the Frobenius morphism on the group
of $\ell$-torsion points for different primes $\ell$, where the latter
$\ell$ are chosen coprime to the characteristic of the finite field.
If the product over all $\ell$ is sufficiently big, then one can
recover $\chi_F$ by the Chinese remainder theorem. Schoof's algorithm
for elliptic curves behaves very well, due to the improvements by O.
Atkin and N. Elkies~\cite{Schoof1}~\cite{elk98}. Cryptographic sizes
still seem to be difficult to reach in genus $2$~\cite{GaSc04} and
higher.  A generalization of the method of R. Schoof, the complexity
of which is polynomial in the genus, has been proposed by B.
Edixhoven~\cite{edix06}.  On the other hand, there are the so-called
\emph{$p$-adic methods}, introduced by the work of T.
Satoh~\cite{MR2001j:11049}. These algorithms rely on the computation
of the action of the Frobenius morphism on $p$-adic canonical lifts of
certain arithmetic invariants, where $p>0$ is the characteristic of
the finite base field. They have in common a bad behavior with respect
to the characteristic $p$.  It is convenient to assess their
complexity in terms of $\log(q)$, where $q$ is the number of
elements of the finite field $\F_q$ and where the characteristic $p$
of the finite field is assumed as fixed.  \newline\indent In the
following we recall existing work about $p$-adic point counting
algorithms. First, a series of algorithmic improvements upon the
algorithm of Satoh~\cite{Gaudry2002, Harley2002, Harley2002a,
  KiPaChPaKiHa2002, LL03, MR1895422} led to a \emph{quasi-quadratic
  time point counting algorithm} for ordinary elliptic curves over
finite fields.  The special case of characteristic $2$ was then
interpreted by J.-F. Mestre in terms of a $2$-adic analogue of Gauss'
\emph{algebraic geometric mean}. He gave a very elegant and simple
quasi-cubic time point counting algorithm for ordinary elliptic curves
over finite fields of characteristic $2$~\cite{Mestre4}. The
previously cited algorithmic improvements upon the algorithm of Satoh
can also be applied to Mestre's algorithm, which results in a
quasi-quadratic time point counting algorithm. Mestre has extended the
scope of his algorithm by showing that the algebraic geometric mean
formulas can be considered as a particular case of the Riemann
duplication formulas for complex analytic theta
functions~\cite{Mestre5}. His ideas led to a quasi-quadratic time
point counting algorithm for ordinary hyperelliptic curves defined
over a finite field of characteristic $2$~\cite{LL07}.  Other $p$-adic
algorithms were found by K. Kedlaya~\cite{Kedlaya2001} and A. Lauder
\cite{MR2003g:11140}. Their algorithms are based on the computation of
the action of a formal Frobenius lift on the Monsky-Washnitzer and the
Dwork cohomology groups, respectively.  \newline\indent The aim of
this paper is to describe an algorithm for the computation of the
number of points of a generic ordinary hyperelliptic curve over a
finite field $\F_q$ with $q$ elements of characteristic $p>2$ which
has quasi-quadratic time and quadratic space complexity in terms of
$\log_p(q)$. We give two versions of our algorithm: a proven version
of the algorithm, for which we are able to prove that it is correct
and that it has quasi-quadratic time and quadratic space complexity,
and a heuristic version of the algorithm, the proof of which relies on
some yet unproven facts.

The reason why we give both algorithms is that, due to
the smaller constant term in the complexity estimate of the
heuristic algorithm,
it performs much faster than the proven algorithm for field sizes
which are actually used in the applications. We have strong computational
evidence that also the heuristic version of the algorithm is correct.
Our method follows the
point counting strategy of J.-F. Mestre, which relies on the
computation of arithmetic invariants of canonical lifts using the
coordinate system provided by the theta null values associated to an
abelian variety with theta structure.  In our case, the theta null
point is computed with respect to a theta structure of level $2^\nu p$
where $\nu >1$ is an integer and $p>2$ is the characteristic of the
residue field.  The results of this paper suggest a global positive
answer to the question whether there exists a quasi-quadratic time and
quadratic space algorithm for the computation of the number of
rational points of a generic ordinary abelian variety over a
finite field.

Both versions of the algorithm consist of the following two main
steps, according to the classical lift and norm paradigm. Let $C$ be
an ordinary hyperelliptic curve over a finite field of characteristic
$p>2$ whose Jacobian is absolutely simple.
\begin{enumerate}
\item First, one computes a certain arithmetic invariant associated to
  the canonical lift of the Jacobian variety of the curve $C$.  The
  arithmetic invariant is given by the theta null point of a Jacobian
  of $C$ with respect to a semi-canonical
  theta structure of level $2^\nu p$ with $\nu > 0$ an integer. The
  lifting is done using a multivariate Henselian lifting algorithm
  applied to certain theta identities of level
  $2^\nu p$ and degree $p^2$.
\item Secondly, the norm of a certain quotient of theta null values
  attached to the canonical lift is
  computed. This value coincides with the product of
  the invertible eigenvalues of the absolute Frobenius endomorphism on
  the reduction.  If one computes the canonical lift and the norm with
  sufficiently high precision, then it is straight forward to recover
  the characteristic polynomial of the Frobenius morphism from the
  latter approximation of the norm.
\end{enumerate}
The only difference between the proven and the heuristic version of
our algorithm lies in the choice of the parameter $\nu$ of Step 1.
In the heuristic version of the algorithm the parameter $\nu$
is chosen to be equal $1$.
In the case that $\nu = 3$
we are able to give a complete proof of correctness of
the algorithm.

This paper is organized as follows.  In Section \ref{thetaidentities}
we present some new theoretical results that form the basis of our algorithm.
\begin{itemize}
\item (Section \ref{sigmasquare})
An important ingredient of our algorithm is given by theta relations
of level $2^\nu p$ and degree $p^2$,
which describe the action of a square of the unique Frobenius lift on the
Serre-Tate formal torus with respect to the coordinates given by the
canonical theta structure~\cite{ca05a}.
We remark, that the equations, which are described in Section
\ref{sigmasquare}, can also be used for CM
construction in arbitrary characteristic generalizing
the results of~\cite{ckl08}.
\item (Section \ref{riemanntwop}) We give equations which, together
  with the relations of Section \ref{sigmasquare}, define the local
  deformation space of an ordinary abelian variety with a $(2^\nu
  p)$-theta structure. Classically, equations in terms of theta
  constants defining the moduli space of abelian varieties are known
  if the level is divisible by $8$ (see~\cite[$\S$6]{MR36:2621}).

\item (Section \ref{extheta2p})
It is well-known that the $4$-theta null point of the Jacobian variety
of a hyperelliptic
curve can be computed using the Thomae formulas. One can extend the
$4$-theta null point to a $(2^\nu p)$-theta null point using the equations
for level $2^{\nu}p$ that are given in Section \ref{riemanntwop}.
\item (Section \ref{gtrace})
We give a transformation formula which relates a certain quotient
of theta null values of the canonical lift
for level $2^{\nu}p$
with the product of the invertible eigenvalues of the absolute
Frobenius morphism acting on the reduction.

\end{itemize}
In Section \ref{algo} we give a point counting algorithm for generic
ordinary hyperelliptic curves over a finite field of characteristic
$p>2$.  In Section \ref{complexity} we provide a detailed complexity
analysis of the latter algorithm. In Section \ref{secfinit}, we prove
that a closed variety defined from the equations of Section
\ref{riemanntwop} has dimension $0$.  In Section \ref{implementation}
we give some examples that have been computed using an experimental
implementation of our algorithm.

\paragraph{Notations and complexity hypothesis.}

We will denote by $\xf_q$ a finite field of characteristic $p>0$
having $q$ elements. Let $\xz_q$ denote the ring of Witt vectors with
values in $\xf_q$ and by $\Q_q$ the field of fractions of $\Z_q$.
There exists a canonical lift $\sigma \in \mathrm{Aut}(\xz_q)$ of the
$p$-th power Frobenius morphism of $\xf_q$.  If $a$ is an element of
$\xz_q$ then we denote by $\bar{a}$ its reduction modulo $p$ in $\xf_q$.
We say that we have computed an element $x \in \xz_q$ to precision $m$,
if we we can write down a bit-string representing its class in the
quotient ring $\xz_q \slash p^m \xz_q$.  In order to assess the
complexity of our algorithm we use the computational model of a
Random Access Machine~\cite{MR1251285}. We assume that the
multiplication of two $n$-bit length integers takes $O(n^\mu)$ bit
operations.  One can take $\mu=1+\epsilon$ (for $n$ large),
$\mu=\log_2 (3)$ and $\mu=2$ using the FFT multiplication algorithm,
the Karatsuba algorithm and a naive multiplication method,
respectively.  Let $x,y \in \xz_q \slash p^m \xz_q$. For the following
we assume the sparse modulus representation which is explained in
\cite[pp.239]{MR2162716}.  Under this assumption one can compute the
product $xy$ to precision $m$ by performing $O(\log (q)^\mu m^\mu)$
bit operations.

\section{Theta relations of level $2p$}
\label{thetaidentities}

In this section we give some original results that form the basis of
our point counting algorithm.  In order to do explicit canonical
lifting it is necessary to find theta identities that describe the
\emph{arithmetic invariants of canonical lifts}.  It is not difficult
to make up a theta relation. A hard problem is to make a 'complete'
set of theta relations that is suitable for canonical lifting.  We
give such a complete set of equations in the following sections.  Also
we give a special theta relation, deduced from the classical
\emph{transformation formula}, which allows one to recover the
eigenvalues of the Frobenius from the arithmetic invariant of the
canonical lift.

\subsection{A local $p^2$-correspondence}
\label{sigmasquare}

Let $R$ be a complete noetherian local ring with
finite residue field $\F_q$ of characteristic $p>0$.
Suppose that we are given an abelian scheme $A$
over $R$ which has ordinary reduction.
Let $\pol$ be an ample symmetric line bundle of degree $1$ on $A$.
Assume that there exists a $\sigma \in \mathrm{Aut}(R)$ lifting the $p$-th power
Frobenius automorphism of $\F_q$.
For $m \geq 1$ we set $Z_m=(\xz / m\xz)^g$ where $g$ is the relative
dimension of $A$ over $R$.
\newline\indent
Now assume that $p>2$ and let $n \geq 1$ an integer with $(n,p)=1$,
i.e. $n$ is coprime to $p$.
Suppose that we are given a symmetric
theta structure $\Theta_{2n}$ of type $Z_{2n}$
for $\pol^{2n}$ and an isomorphism
\begin{eqnarray}
\label{trivp}
Z_{p,R} \stackrel{\sim}{\rightarrow} A[p]^{\mathrm{et}},
\end{eqnarray}
where $A[p]^{\mathrm{et}}$ denotes the maximal \'{e}tale
quotient of $A[p]$.
By~\cite[Th.2.2]{ca05a} there exists a canonical theta structure
$\Theta_p$ of type $Z_p$ for the line bundle $\pol^p$ which is uniquely
determined by the isomorphism (\ref{trivp}).
Let $\Theta_{2np}=\Theta_{2n} \times
\Theta_p$ be the semi-canonical symmetric product theta structure
of type $Z_{2np}$ for $\pol^{2np}$ (see~\cite[$\S$3.2]{ckl08}).
\newline\indent
We denote the theta null point
with respect to the theta structure $\Theta_{2np}$
by $(a_u)_{u \in Z_{2np}}$.
In the following we consider $Z_2$, $Z_{np}$ and
$Z_{2p}$ as embedded compatibly into $Z_{2np}$.
Let $S$ be the set of all $4$-tuples $(x,y,v,w) \in Z_{2np}^4$ such
that the sets $\{ x+y, x-y \}$ and $\{ v+pw, v-pw \}$
are equal and contained in $Z_{np}$.
\begin{theorem}
\label{thetarelations}
There exists an $\omega \in R^*$ such that for all $(x,y,v,w) \in S$
one has
\[
\sum_{z \in Z_2} a_{x+z} a_{y+z}
= \omega \sum_{u \in Z_{2p}}
a_{v+pu} a^{\sigma^2}_{w+u}.
\]
\end{theorem}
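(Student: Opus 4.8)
The statement is a Frobenius-equivariant version of the Riemann relations (addition formulas) for theta functions of level $2np$, so my approach is to reduce it to the classical Riemann relations together with the defining property of the canonical theta structure $\Theta_p$ from \cite[Th.~2.2]{ca05a}. The key observation is that the product theta structure $\Theta_{2np}=\Theta_{2n}\times\Theta_p$ makes the theta functions for $\pol^{2np}$ factor (up to the identifications $Z_{2np}\cong Z_{2n}\times Z_p$ and compatibly $Z_{2np}\supseteq Z_2\times Z_p$, etc.) as products of theta functions for $\pol^{2n}$ and for $\pol^p$. Concretely, I would first set up the isomorphism $H^0(A,\pol^{2np})\cong H^0(A,\pol^{2n})\otimes H^0(A,\pol^p)$ induced by $\Theta_{2np}=\Theta_{2n}\times\Theta_p$, and write $a_u = b_{u_1}\,c_{u_2}$ for the theta null point, where $u\mapsto(u_1,u_2)$ is the decomposition $Z_{2np}\cong Z_{2n}\times Z_p$, $(b_{u_1})$ is the $2n$-theta null point and $(c_{u_2})$ the canonical $p$-theta null point.

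Next I would feed the combinatorial condition defining $S$ through this decomposition. Writing $x=(x_1,x_2)$ etc., the requirement that $\{x+y,x-y\}=\{v+pw,v-pw\}\subseteq Z_{np}$ splits into a condition at $2n$ and a condition at $p$: on the $2n$-part one gets a relation of the classical Riemann type for $\pol^{2n}$ (the index $p$ acts invertibly on $Z_{2n}$ since $(2n,p)=1$, so $pw_1$ is harmless there), and on the $p$-part one gets that $\{x_2+y_2,x_2-y_2\}=\{v_2,v_2\}$, i.e. essentially $w_2$ is killed by multiplication by $p$ on $Z_p$, reducing the $p$-part to a statement about the $p$-theta null point alone. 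The left side $\sum_{z\in Z_2}a_{x+z}a_{y+z}$ then factors as $\big(\sum_{z_1\in Z_2}b_{x_1+z_1}b_{y_1+z_1}\big)\cdot c_{x_2}c_{y_2}$ — here $z\in Z_2$ contributes only to the $2n$-coordinate — and similarly $\sum_{u\in Z_{2p}}a_{v+pu}a^{\sigma^2}_{w+u}$ factors, with the $Z_{2p}$-sum splitting as a $Z_2$-sum on the $2n$-side and a $Z_p$-sum on the $p$-side. Applying the classical Riemann duplication/addition relations (as in \cite[$\S$6]{MR36:2621}) to the $2n$-factor absorbs it into a single constant times the matching $2n$-term, leaving a purely level-$p$ identity.

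The heart of the proof — and the main obstacle — is then the level-$p$ identity: one must show that the canonical $p$-theta null point $(c_u)$ satisfies
\[
c_{x_2}c_{y_2} \;=\; \omega' \sum_{u_2\in Z_p} c_{v_2+pu_2}\,c^{\sigma^2}_{w_2+u_2}
\]
for a single unit $\omega'\in R^*$ independent of $(x_2,y_2,v_2,w_2)$, whenever the indices satisfy the reduced combinatorial condition. This is exactly where the defining property of $\Theta_p$ in \cite[Th.~2.2]{ca05a} — that it is the unique canonical theta structure compatible with the trivialization (\ref{trivp}) of the étale quotient $A[p]^{\mathrm{et}}$ — must be invoked: the canonical theta structure is precisely the one for which the $q$-th power Frobenius (hence, after taking the appropriate power, $\sigma^2$) acts on the $p$-theta coordinates by the formula built into the Serre--Tate/Mumford-type parametrization. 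I would extract from that theorem the explicit action of the Frobenius lift on $(c_u)$ and verify the displayed identity by a direct computation with the Mumford-style theta group relations, checking that the scalar $\omega'$ that appears is independent of the choice of $(x_2,y_2,v_2,w_2)\in$ (the $p$-part of $S$); the independence is forced because any two choices differ by an element of the theta group, which scales both sides by the same factor.

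Finally I would reassemble: $\omega$ is the product of the constant coming from the classical $2n$-Riemann relation and the unit $\omega'$ from the level-$p$ identity, and it does not depend on $(x,y,v,w)\in S$ because neither of its factors does. The remaining work is bookkeeping: tracking the compatible embeddings $Z_2,Z_{np},Z_{2p}\hookrightarrow Z_{2np}$ and checking that the index shifts ($+z$, $+pu$, $+pw$) land in the correct coordinates under the product decomposition — routine but requiring care, since the same symbol $p$ plays the role of an invertible scalar on the $2n$-side and the annihilating scalar on the $p$-side.
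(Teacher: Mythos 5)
Your proposal rests on a structural claim that is not true: that the product theta structure $\Theta_{2np}=\Theta_{2n}\times\Theta_p$ makes the theta null point factor as $a_u=b_{u_1}c_{u_2}$ under $Z_{2np}\cong Z_{2n}\times Z_p$. What the product structure gives is a decomposition of the Heisenberg module $V(Z_{2np})\cong V(Z_{2n})\otimes V(Z_p)$ (equivalently of $H^0(A,\pol^{2np})$ as a representation of the theta group), but the distinguished finite theta function $q_{\pol^{2np}}$ — the evaluation at the origin — is not a pure tensor in this decomposition. If it were, the array $(a_{(u_1,u_2)})_{u_1\in Z_{2n},\,u_2\in Z_p}$ would have rank one, which is incompatible with the quadratic Riemann-type relations the null values actually satisfy and fails already in simple analytic examples (level-$2n$ times level-$p$ theta constants of an elliptic curve do not multiply to level-$2np$ ones). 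Since every subsequent step of your argument — splitting the condition defining $S$, factoring both sides, absorbing the $2n$-part by a classical Riemann relation — is premised on this factorization, the reduction to your ``level-$p$ identity'' does not get off the ground. That reduced identity is itself suspect: in $Z_p$ one has $pu_2=0$, so your right-hand side degenerates to $c_{v_2}\sum_{u_2\in Z_p}c^{\sigma^2}_{w_2+u_2}$, a shape that the canonical level-$p$ null point has no reason to satisfy and that you do not derive from anything.

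The second gap is the source of the twist $\sigma^2$. You hope to extract from \cite[Th.2.2]{ca05a} an ``explicit action of the Frobenius lift'' on the level-$p$ null values, but that theorem only produces the canonical theta structure $\Theta_p$ from the trivialization (\ref{trivp}); it does not give the needed Galois formula. In the paper the mechanism is different: one enlarges the étale trivialization to $Z_{p^3}$, obtains the canonical $\Theta_{p^3}$ and the semi-canonical $\Theta_{2np^3}$, and applies the generalized multiplication formula (Corollary \ref{mummult}) twice — once with $i=j=1$, $m=-n=p$, producing $q_{\pol^{np}}(v+pw)\,q_{\pol^{np}}(v-pw)=\lambda\sum_{u\in Z_{2p}}q_{\pol^{2np}}(v+pu)\,q_{\pol^{2np^3}}(w+u)$, and once with $i=j=p$, $m=-n=1$, producing the $Z_2$-sum side. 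The $\sigma^2$ appears only when the level-$2np^3$ values are identified with $\sigma^2$-twisted level-$2np$ values via the $p^2$-compatibility of $\Theta_{p^3}$ with $\Theta_p$ and the Galois-theoretic results of \cite{ca05b} and \cite{ckl08}; the definition of $S$ then matches the two left-hand sides. So besides the false factorization, your outline is missing the essential ingredients (the level-$p^3$ structure, $p^2$-compatibility, and the multiplication formula of Section \ref{genmult}) that actually produce both the $Z_{2p}$-sum and the Frobenius square.
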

\begin{proof}
Assume that we have chosen an isomorphism
\begin{eqnarray}
\label{trivp3}
Z_{p^3,R} \stackrel{\sim}{\rightarrow}
A[p^3]^{\mathrm{et}}
\end{eqnarray}
which induces the trivialization (\ref{trivp}) if one restricts to $Z_p$.
The choice of the isomorphism (\ref{trivp3}) possibly requires
a local-\'{e}tale extension of the base.
Nevertheless, the resulting
formulas are defined over the original ring $R$.
By~\cite[Th.2.2]{ca05a} there exists a canonical theta structure
$\Theta_{p^3}$ of type $Z_{p^3}$ for the line bundle $\pol^{p^3}$
depending on the trivialization (\ref{trivp3}).
By~\cite[Lem.2.1]{ca05b} the theta structures $\Theta_{p^3}$ and
$\Theta_p$ are $p^2$-compatible in the sense of~\cite[Def.5.5]{ca05b}.
By~\cite[Lem.3.3]{ckl08} there exists a semi-canonical product theta
structure $\Theta_{2np^3}=\Theta_{2n} \times \Theta_{p^3}$
of type $Z_{2np^3}$ for $\pol^{2np^3}$.
We remark that by~\cite[Th.5.1]{ca05a} and~\cite[Lem.3.2]{ckl08}
the canonical theta structure $\Theta_{p^3}$ is symmetric.
Hence by~\cite[Lem.3.4]{ckl08} the theta structures
$\Theta_p$, $\Theta_{2np}$, $\Theta_{p^3}$
and $\Theta_{2np^3}$ form a compatible system.
Because of the symmetry of the theta structure $\Theta_{2n}$ there
exists a theta structure $\Theta_n$ of type
$Z_n$ for $\pol^n$ which is $2$-compatible with $\Theta_{2n}$ (see~\cite[$\S$2,Rem.1]{MR34:4269}). By the same reasoning as above there exists a semi-canonical product theta structure $\Theta_{np}=\Theta_n \times \Theta_p$ which is $2$-compatible with $\Theta_{2np}$.
\newline\indent
Suppose that we are given a rigidification of the line bundle $\pol$.
We set $V(Z_m)=\underline{\mathrm{Hom}}(Z_{m,R}, \mathcal{O}_R)$ for $m \geq 1$. 
Recall that $V(Z_m)$ is the module of finite theta functions
as defined in~\cite[$\S$1]{MR34:4269}.
One can choose theta group equivariant isomorphisms
\[
\mu_i: \pi_* \pol^i \stackrel{\sim}{\rightarrow} V(Z_i),
\]
where $i \in I= \{ np,2np,2np^3 \}$ and where $\pi:A \rightarrow \spec{R}$
denotes the structure morphism.
The isomorphisms $\mu_i$ induce finite theta functions $q_{\pol^i} \in
V(Z_i)$ where $i \in I$ . 
\newline\indent
It follows from Corollary \ref{mummult} taking
$i=j=1$ and $m=-n=p$ that there exists a $\lambda
\in R^*$ such that
\[
q_{\pol^{np}}(v+pw) q_{\pol^{np}}(v-pw) = \lambda
\sum_{u \in Z_{2p}} q_{\pol^{2np}}(v+pu) q_{\pol^{2np^3}}(w+u).
\]
It follows by~\cite[Th.2.4]{ckl08} and~\cite[Th2.3]{ca05b}
in conjunction with
\cite[Lem2.2]{ca05b} and
\cite[Lem.3.5]{ckl08} that
\begin{eqnarray}
\label{key1}
q_{\pol^{np}}(v+pw) q_{\pol^{np}}(v-pw) = \lambda
\sum_{u \in Z_{2p}} q_{\pol^{2np}}(v+pu) q_{\pol^{2np}}(w+u)^{\sigma^2}.
\end{eqnarray}
Corollary \ref{mummult} implies by means
of the choice $i=j=p$ and $m=-n=1$
that there exists a $\lambda \in R^*$ such that
\begin{eqnarray}
\label{key2}
q_{\pol^{np}}(x+y) q_{\pol^{np}}(x-y) = \lambda
\sum_{z \in Z_{2}} q_{\pol^{2np}}(x+z) q_{\pol^{2np}}(y+z).
\end{eqnarray}
By the assumption that $(x,y,v,w) \in S$,
the left hand sides of the equations (\ref{key1}) and
(\ref{key2}) are equal. As a consequence, there exists an $\omega \in R^*$ such
that
\[
\sum_{z \in Z_{2}} q_{\pol^{2np}}(x+z) q_{\pol^{2np}}(y+z) = \omega
\sum_{u \in Z_{2p}} q_{\pol^{2np}}(v+pu) q_{\pol^{2np}}(w+u)^{\sigma^2}.
\]
This completes the proof of the theorem.
\end{proof}
\noindent
%%There is an obvious analogy with the equations of
%%\cite[Th.3.1]{ca05b}.
%%DL what kind of analogy?
In the following we illustrate Theorem \ref{thetarelations} by some examples.
\newline\newline
\bfseries Example $g=1$, $n=1$, $p=3$: \mdseries
\begin{eqnarray*}
a_{1}a_{0}+a_{2}a_{3} & = & \omega (a_{1}a^{\sigma^2}_{0}+a_{2}a^{\sigma^2}_{3}+2a_{1}a^{\sigma^2}_{2}+2a_{2}a^{\sigma^2}_{1}) \\
a_{2}a_{0}+a_{1}a_{3} & = & \omega (2a_{1}a^{\sigma^2}_{1}+a_{2}a^{\sigma^2}_{0}+a_{1}a^{\sigma^2}_{3}+2a_{2}a^{\sigma^2}_{2}) \\
a_{3}a_{3}+a_{0}a_{0} & = & \omega (2a_{0}a^{\sigma^2}_{2}+a_{3}a^{\sigma^2}_{3}+2a_{3}a^{\sigma^2}_{1}+a_{0}a^{\sigma^2}_{0}) \\
a_{0}a_{3}+a_{3}a_{0} & = & \omega (a_{0}a^{\sigma^2}_{3}+2a_{3}a^{\sigma^2}_{2}+a_{3}a^{\sigma^2}_{0}+2a_{0}a^{\sigma^2}_{1})
\end{eqnarray*}
\bfseries Example $g=1$, $n=1$, $p=5$: \mdseries
\begin{eqnarray*}
a_{2}a_{0}+a_{3}a_{5} & = & \omega
(2a_{3}a^{\sigma^2}_{3}+2a_{3}a^{\sigma^2}_{1}+a_{2}a^{\sigma^2}_{0}+2a_{2}a^{\sigma^2}_{4}
+2a_{2}a^{\sigma^2}_{2}+a_{3}a^{\sigma^2}_{5}) \\
a_{2}a_{5}+a_{3}a_{0} & = & \omega
(2a_{2}a^{\sigma^2}_{3}+a_{2}a^{\sigma^2}_{5}+2a_{3}a^{\sigma^2}_{4}+2a_{3}a^{\sigma^2}_{2}
+2a_{2}a^{\sigma^2}_{1}+a_{3}a^{\sigma^2}_{0}) \\
a_{0}a_{5}+a_{5}a_{0} & = & \omega
(a_{0}a^{\sigma^2}_{5}+2a_{5}a^{\sigma^2}_{4}+2a_{0}a^{\sigma^2}_{3}+2a_{5}a^{\sigma^2}_{2}
+a_{5}a^{\sigma^2}_{0}+2a_{0}a^{\sigma^2}_{1}) \\
a_{4}a_{0}+a_{1}a_{5} & = & \omega
(a_{4}a^{\sigma^2}_{0}+2a_{1}a^{\sigma^2}_{1}+a_{1}a^{\sigma^2}_{5}+2a_{1}a^{\sigma^2}_{3}
+2a_{4}a^{\sigma^2}_{4}+2a_{4}a^{\sigma^2}_{2}) \\
a_{5}a_{5}+a_{0}a_{0} & = & \omega
(2a_{0}a^{\sigma^2}_{2}+2a_{5}a^{\sigma^2}_{1}+2a_{0}a^{\sigma^2}_{4}+a_{5}a^{\sigma^2}_{5}
+2a_{5}a^{\sigma^2}_{3}+a_{0}a^{\sigma^2}_{0}) \\
a_{1}a_{0}+a_{4}a_{5} & = & \omega
(a_{1}a^{\sigma^2}_{0}+2a_{4}a^{\sigma^2}_{3}+a_{4}a^{\sigma^2}_{5}+2a_{1}a^{\sigma^2}_{4}
+2a_{1}a^{\sigma^2}_{2}+2a_{4}a^{\sigma^2}_{1})
\end{eqnarray*}
% The following table gives the number of equations and variables
% for $n=1$ in higher dimension.\newline
% \begin{center}
% \begin{tabular}{l|l|l}
% & \# Equations & \# Variables \\
% \hline
% $g=1$, $p=7$ & 8 & 14 \\
% \hline
% $g=1$, $p=11$ & 12 & 22 \\
% \hline
% $g=2$, $p=3$ & 20 & 36 \\
% \hline
% $g=2$, $p=5$ & 52 & 100 \\
% \hline
% $g=3$, $p=3$ & 112 & 216 \\
% \hline
% $g=4$, $p=3$ & 656 & 1296
% \end{tabular}
% \end{center}

\subsubsection{A generalized theta multiplication formula}
\label{genmult}

In the following we give a generalized
multiplication formula in the context
of Mumford's algebraic theta functions.
We only sketch a proof. For more details
we refer to~\cite{ko76} and~\cite{ke89}. 
\newline\indent
Let $A$ be an abelian scheme over a local ring $R$ and let
$\xi$ denote the isogeny $A^2 \rightarrow A^2$ given by the
matrix
\[
\left(
\begin{array}{cc}
1 & m \\
1 & n
\end{array}
\right)
\]
where $m,n \in \xz$.
Let $i,j \geq 1$ and $I= \{ i, j , i+j , i m^2+ j n^2 \}$.
Assume that we are given an ample symmetric line bundle $\pol$
on $A$ and compatible theta structures
$\Theta_i$ for $\pol^i$ of type $K_i$ where $i \in I$.
We set $\emm_{i,j}=p_1^* \pol^i \otimes p_2^* \pol^j$.
\begin{lemma}
\label{isso}
Suppose that $i m + j n = 0$.
Then one has
\[
\xi^* \emm_{i,j} \cong \emm_{i+j,i m^2+ j n^2}.
\]
\end{lemma}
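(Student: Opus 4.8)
The statement is a pullback identity for line bundles under the explicit isogeny $\xi : A^2 \to A^2$ with matrix $\left(\begin{smallmatrix}1 & m \\ 1 & n\end{smallmatrix}\right)$, and the plan is to compute $\xi^*\emm_{i,j}$ directly using the biextension/theorem-of-the-square formalism. First I would reduce to a statement about divisor classes (or, equivalently, about the symmetric bilinear form attached to $\emm_{i,j}$ via the map $\varphi_{\emm_{i,j}} : A^2 \to \widehat{A^2}$) because $\pol$ is symmetric, so each $\pol^k$ has $\varphi_{\pol^k} = k\varphi_\pol$, and $\varphi$ is additive in the obvious sense on products. Writing $x = (x_1,x_2)$ for a point of $A^2$, the line bundle $\emm_{i,j} = p_1^*\pol^i \otimes p_2^*\pol^j$ corresponds to the "quadratic form" $q(x_1,x_2) = i\,Q(x_1) + j\,Q(x_2)$, where $Q$ is the quadratic form attached to $\pol$ (meaning $Q(x) \leftrightarrow \pol^{\otimes ?}$ in the usual theorem-of-the-square sense, i.e. working in $\operatorname{NS}(A)\otimes\Q$ or with the associated symmetric biextension).

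Next I would pull back: $\xi$ sends $(y_1,y_2) \mapsto (y_1 + m y_2,\ y_1 + n y_2)$, so
\[
\xi^* q\,(y_1,y_2) = i\,Q(y_1 + m y_2) + j\,Q(y_1 + n y_2).
\]
Expanding via the bilinearity of the form $B$ polarizing $Q$ (so $Q(a+b) = Q(a) + Q(b) + B(a,b)$ up to the standard normalization), the coefficient of $Q(y_1)$ is $i + j$, the coefficient of $Q(y_2)$ is $i m^2 + j n^2$, and the coefficient of the cross term $B(y_1,y_2)$ is $i m + j n$. The hypothesis $i m + j n = 0$ kills the cross term exactly, leaving $(i+j)Q(y_1) + (i m^2 + j n^2)Q(y_2)$, which is precisely the form attached to $\emm_{i+j,\,i m^2 + j n^2} = p_1^*\pol^{i+j}\otimes p_2^*\pol^{i m^2 + j n^2}$. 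This proves the two line bundles are algebraically equivalent; to upgrade to an actual isomorphism I would invoke the symmetry of $\pol$ together with the fact that both sides are symmetric line bundles (pullback of symmetric is symmetric, since $\xi$ commutes with $[-1]$), and a symmetric line bundle is determined by its class in $\operatorname{NS}$ up to a unique choice — more precisely, two symmetric line bundles that are algebraically equivalent differ by a symmetric element of $\widehat{A^2}(R)$, i.e. a $2$-torsion point, and here the cleanest route is to note that the compatible theta structures $\Theta_i$ ($i \in I$) were assumed to exist precisely so that the relevant rigidified/symmetric line bundles are pinned down; one then checks the isomorphism fibrewise over the residue field (or over $\bar{\F}_q$), where it is classical, and spreads it out over $R$ by rigidity for the abelian scheme $A^2/R$.

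The main obstacle is not the divisor-class computation, which is a one-line expansion, but the passage from "algebraically equivalent" to "isomorphic" over the base ring $R$: one must control the $2$-torsion ambiguity and make sure the isomorphism is the canonical one compatible with the chosen theta structures, so that the subsequent theta-function computation (Corollary \ref{mummult} and the generalized multiplication formula) is actually legitimate. Since the lemma statement only claims $\xi^*\emm_{i,j} \cong \emm_{i+j,\,i m^2 + j n^2}$ as line bundles (not a specific isomorphism), I would handle this by the symmetric-line-bundle rigidity argument above: both sides are symmetric, they agree in $\operatorname{NS}$, hence they differ by translation by a $2$-torsion section $\epsilon \in (A^2)[2](R)$; but the condition $i m + j n = 0$ together with symmetry of all the $\Theta_i$ forces the linear (translation) part to vanish as well, so $\epsilon = 0$ and the isomorphism exists. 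This is exactly where the hypotheses "$\pol$ symmetric" and "$i m + j n = 0$" are used, and it is the only step requiring care beyond bookkeeping.
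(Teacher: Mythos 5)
Your N\'eron--Severi computation is correct, and it is a genuinely different route from the paper's: the paper proves the lemma by restricting $\xi^* \emm_{i,j}$ to the slices $A\times\{b\}$ and $\{0_A\}\times A$, using the Theorem of the Square (this is where $im+jn=0$ enters) and the symmetry of $\pol$ (giving $[m]^*\pol^i\otimes[n]^*\pol^j\cong\pol^{im^2+jn^2}$), and then concludes by the Seesaw Principle, which produces the isomorphism on the nose, with no residual $\mathrm{Pic}^0$ ambiguity to dispose of. Your route only yields equality of the associated polarization morphisms, i.e.\ that the two bundles differ by an element of $\mathrm{Pic}^0(A^2/R)$, which symmetry of both sides reduces to a $2$-torsion point $\epsilon$ of $\widehat{A^2}$.

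The gap is exactly at the step you yourself flag: you never prove $\epsilon=0$, and the two justifications you offer do not work. The theta structures $\Theta_i$ play no role in this lemma (its statement and the paper's proof use only the symmetric bundle $\pol$ and the hypothesis $im+jn=0$), so ``symmetry of all the $\Theta_i$ forces the translation part to vanish'' is an assertion, not an argument; and ``check fibrewise over the residue field, where it is classical, and spread out by rigidity'' both begs the question (the statement over a field is precisely what is being proved, classically by the very seesaw argument) and is unavailable as stated, since in Section \ref{genmult} the base $R$ is merely a local ring, so no Henselian rigidity for torsion sections is at hand, and one would in any case need $2$ invertible to control $\widehat{A^2}[2]$. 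The repair is short and essentially returns you to the paper's computation: restrict the discrepancy bundle $\xi^*\emm_{i,j}\otimes\emm_{i+j,\,im^2+jn^2}^{-1}$, which you already know lies in $\mathrm{Pic}^0(A^2/R)$, to the two axes $A\times\{0_A\}$ and $\{0_A\}\times A$. On the first axis $\xi$ restricts to $x\mapsto(x,x)$, giving $\pol^{i}\otimes\pol^{j}=\pol^{i+j}$ on both sides; on the second it restricts to $x\mapsto(mx,nx)$, giving $[m]^*\pol^i\otimes[n]^*\pol^j=\pol^{im^2+jn^2}$ by symmetry of $\pol$. Since restriction to the two axes identifies $\widehat{A^2}$ with $\widehat{A}\times\widehat{A}$, the discrepancy is trivial and $\epsilon=0$.
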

\begin{proof}
Let $(a,b) \in A^2$. We define
\[
s_1:A \rightarrow A^2,x \mapsto (a,x)
\quad
\mbox{and} \quad
s_2:A \rightarrow A^2,x \mapsto (x,b).
\]
One computes
\begin{eqnarray*}
s_2^* \emm_{i,j} = s_2^* p_1^* \pol^i \otimes s_2^* p_2^* \pol^j
 = (p_1 \circ s_2)^* \pol^i \otimes (p_2 \circ s_2)^* \pol^j = \pol^i
\end{eqnarray*}
where $p_k:A^3 \rightarrow A$ denotes the projection on the $k$-th
factor.
Similarly, we have $s_1^* \emm_{i,j} = \pol^j$.
Also we have
\begin{eqnarray*}
& s_2^* \xi^* \emm_{i,j} & = (p_1 \circ \xi \circ s_2)^* \pol^i
\otimes (p_2 \circ \xi \circ s_2)^* \pol^j   =  T_{[m]b}^* \pol^i \otimes T_{[n]b}^* \pol^j \\
& & \stackrel{(*)}{=}  \big( T_b^* \pol^{im} \otimes \pol^{-i(m-1)} \big) \otimes \big(
T_b^* \pol^{jn} \otimes \pol^{-j(n-1)} \big) \\
& & = T_b^* \pol^{im+jn} \otimes \pol^{-(im+jn)+(i+j)}  = \pol^{i+j}.  
\end{eqnarray*}
The latter equality follows by our assumption $im+jn=0$.
The equality $(*)$ is implied by the \emph{Theorem of the Square}.
Now take $a=0_A$ where $0_A$ denotes the zero section of $A$.
Then one has
\begin{eqnarray*}
\lefteqn{ s_1^* \xi^* \emm_{i,j} 
= (p_1 \circ \xi \circ s_1)^* \pol^i
\otimes (p_2 \circ \xi \circ s_1)^* \pol^j} \\
& & = [m]^* \pol^i
\otimes [n]^* \pol^j = \pol^{im^2+jn^2}.
\end{eqnarray*}
The latter equality comes from the symmetry of the line bundle $\pol$.
The proposition now follows by applying the \emph{Seesaw Principle}.
\end{proof}
\noindent
Assume now that we are given $n,m \in \xz$ such that $i m + j n = 0$.
There exists a product theta
structure $\Theta_{i,j}$ of type $K_{i,j}$ for $\emm_{i,j}$ where
$K_{i,j}=K_i \times K_j$.
On top of Lemma \ref{isso} one can verify that the theta structure
$\Theta_{i+j,i m^2+ j n^2}$ is $\xi$-compatible with
the theta structure $\Theta_{i,j}$
(compare~\cite[$\S$3]{MR34:4269} and~\cite[Lem.3.8]{ckl08}).
Hence we can apply the Isogeny Theorem
(see~\cite[$\S$1,Th.4]{MR34:4269}) in order
to get the following general addition formula.
\begin{proposition}
\label{generaladd}
There exists a $\lambda \in R^*$ such that for all $g \in
V(K_{i,j})$ and $(x,y) \in K_{i+j,i m^2+ j n^2}$ we have
\[
\xi^*(g)(x,y)= \left\{
\begin{array}{l@{, \quad}l}
\lambda g \big( \xi(x,y) \big) & \xi(x,y) \in K_{i,j} \\
0 & \mathrm{else}
\end{array} \right.
\]
\end{proposition}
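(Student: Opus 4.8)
The plan is to obtain the stated formula as a direct application of the Isogeny Theorem \cite[$\S$1,Th.4]{MR34:4269} to the isogeny $\xi$, the only genuine work being the verification that the theta structures at hand are compatible with $\xi$ in the sense required by that theorem. First I would fix the isomorphism $\beta\colon \xi^*\emm_{i,j}\xrightarrow{\sim}\emm_{i+j,i m^2+ j n^2}$ supplied by Lemma \ref{isso}, which is available precisely because $i m + j n = 0$ and $\pol$ is symmetric. Using the product structures $\Theta_{i,j}=\Theta_i\times\Theta_j$ and $\Theta_{i+j,i m^2+ j n^2}=\Theta_{i+j}\times\Theta_{i m^2+ j n^2}$ built from the compatible family $\{\Theta_i\}_{i\in I}$, I would then check — as indicated in the paragraph preceding the statement, by the computation of \cite[$\S$3]{MR34:4269} together with \cite[Lem.3.8]{ckl08} — that $\Theta_{i+j,i m^2+ j n^2}$ is $\xi$-compatible with $\Theta_{i,j}$ via $\beta$. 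Concretely this means that $\xi$ lifts to a homomorphism of the associated theta groups intertwining the two theta structures, so that $\xi$, $\beta$ and these theta structures form an admissible datum for the Isogeny Theorem; the point of working with product theta structures is exactly that the kernel of $\xi$ then sits compatibly inside the Heisenberg data on both sides.

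Granting this, the Isogeny Theorem describes the pullback on finite theta functions induced by $(\xi,\beta)$, that is, the map $V(K_{i,j})\to V(K_{i+j,i m^2+ j n^2})$, $g\mapsto\xi^*(g)$, by a single universal rule valid for all $g$ simultaneously: there is a constant $\lambda$, depending only on $\xi$, $\beta$ and the theta structures and not on $g$, such that for every $(x,y)\in K_{i+j,i m^2+ j n^2}$ the value $\xi^*(g)(x,y)$ equals $\lambda\, g(\xi(x,y))$ when $\xi(x,y)\in K_{i,j}$ and equals $0$ otherwise. The vanishing on the complement is the usual kernel phenomenon: $\xi$ does not carry all of $K_{i+j,i m^2+ j n^2}$ into $K_{i,j}$, and on the part that escapes the pulled-back finite theta function is forced to be $0$. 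This is exactly the assertion of the proposition, so it only remains to see that $\lambda$ is invertible.

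Since $R$ is local, it suffices to check that $\lambda$ does not reduce to $0$ in the residue field $\F_q$. Over $\F_q$ the Isogeny Theorem applied to the reductions $\overline{\emm}_{i,j}$ and $\overline{\emm}_{i+j,i m^2+ j n^2}$, which remain nondegenerate since their degrees are the prescribed powers, returns a nonzero constant; one can see this by evaluating the formula at a point $(x,y)$ with $\xi(x,y)=0\in K_{i,j}$ against a function $g$ normalised so that $g(0)\neq 0$. Hence $\lambda\in R^*$, and the relative form of the Isogeny Theorem used in \cite{ca05b} and \cite{ckl08} in fact yields the constant in $R^*$ directly. I expect the real difficulty to lie entirely in the first step, namely matching the conventions for the theta groups of $\emm_{i,j}$ and $\emm_{i+j,i m^2+ j n^2}$ so as to exhibit the $\xi$-compatibility of the two product theta structures; once that is in place, the invocation of the Isogeny Theorem and the units check are routine.
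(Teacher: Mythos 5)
Your proposal follows exactly the paper's route: the isomorphism of Lemma \ref{isso}, the verification that the product theta structures $\Theta_{i,j}$ and $\Theta_{i+j,i m^2+ j n^2}$ are $\xi$-compatible (with the same references to \cite[$\S$3]{MR34:4269} and \cite[Lem.3.8]{ckl08}), and then a direct application of the Isogeny Theorem \cite[$\S$1,Th.4]{MR34:4269}, which is precisely how the paper (which only sketches this part) obtains the formula. Your extra paragraph checking that $\lambda$ is a unit is a reasonable elaboration of what the relative Isogeny Theorem already provides and does not change the argument.
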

\noindent
Here we denote by $V(K_{i,j})$ the module of finite
theta functions of type $K_{i,j}$.
We define for $x \in K_{i+j}$
\[
G_x = \{ y \in K_{i m^2+ j n^2} | \xi(x,y) \in K_{i,j} \}.
\]
Here $K_{i m^2+ j n^2}$ and $K_{i,j}$ are considered as subgroups
of $A$ and $A^2$ via the theta structures
$\Theta_{i+j,i m^2+ j n^2}$ and $\Theta_{i,j}$, respectively.
As a corollary of Proposition \ref{generaladd} we get the following theorem.
\begin{theorem}[General Multiplication Formula]
\label{generalmult}
There exists a $\lambda \in R^*$ such that for all $x \in K_{i+j}$,
$f_1 \in V(K_i)$ and $f_2 \in V(K_j)$ we have
\[
(f_1 \star f_2)(x)= \lambda  \sum_{ y \in G_x} f_1(x+ m y) f_2(x+
n y) q_{ \pol^{i m^2+ j n^2}}(y).
\]
\end{theorem}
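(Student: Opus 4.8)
The plan is to recognize $f_1\star f_2$ as the image of the external product of $f_1$ and $f_2$ under the composite $\iota^*\circ\xi^*$, where $\xi^*$ is governed by Proposition~\ref{generaladd} and $\iota^*$ is pullback along the zero section in the second factor, and then to read off the stated sum by bookkeeping.

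First I would recall that, by definition, $f_1\star f_2\in V(K_{i+j})$ is the finite theta function attached to the section of $\pol^{i+j}$ obtained by multiplying the sections of $\pol^i$ and $\pol^j$ that correspond to $f_1$ and $f_2$. Geometrically this multiplication is $\Delta^*$ applied to $p_1^*f_1\otimes p_2^*f_2\in V(K_{i,j})$, i.e. to the finite theta function $(a,b)\mapsto f_1(a)f_2(b)$ on $K_{i,j}=K_i\times K_j$, where $\Delta:A\rightarrow A^2$ is the diagonal and $\Delta^*\emm_{i,j}=\pol^i\otimes\pol^j=\pol^{i+j}$. The key point is that $\Delta=\xi\circ\iota$, where $\iota:A\rightarrow A^2$ is the closed immersion $x\mapsto(x,0_A)$, since $\xi(x,0_A)=(x+m\cdot0_A,\,x+n\cdot0_A)=(x,x)$. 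Combining this with Lemma~\ref{isso}, which identifies $\xi^*\emm_{i,j}$ with $\emm_{i+j,im^2+jn^2}$, and with the trivialization $0_A^*\pol^{im^2+jn^2}\cong\mathcal{O}_R$ coming from the rigidification (under which $\iota^*\emm_{i+j,im^2+jn^2}\cong\pol^{i+j}$), one obtains
\[
f_1\star f_2=\iota^*\bigl(\xi^*(p_1^*f_1\otimes p_2^*f_2)\bigr),
\]
all pullbacks being taken compatibly with the product theta structures $\Theta_{i,j}$ and $\Theta_{i+j,im^2+jn^2}$.

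Next I would evaluate the two pullbacks separately. By Proposition~\ref{generaladd} applied to $g=p_1^*f_1\otimes p_2^*f_2$, for $(x,y)\in K_{i+j}\times K_{im^2+jn^2}$ one has $\xi^*(g)(x,y)=\lambda\,g(\xi(x,y))=\lambda\,f_1(x+my)\,f_2(x+ny)$ when $\xi(x,y)\in K_{i,j}$, and $\xi^*(g)(x,y)=0$ otherwise; by the definition of $G_x$, the condition $\xi(x,y)\in K_{i,j}$ is precisely $y\in G_x$. Then I would spell out $\iota^*$ on finite theta functions of type $K_{i+j,im^2+jn^2}$: under the canonical splitting $V(K_{i+j,im^2+jn^2})\cong V(K_{i+j})\otimes_R V(K_{im^2+jn^2})$ attached to the product theta structure, pullback along $\iota$ amounts to evaluating the second-factor theta functions at the zero section, which returns the theta-null values recorded by the finite theta function $q_{\pol^{im^2+jn^2}}$; hence $\iota^*(F)(x)=\sum_{y\in K_{im^2+jn^2}}F(x,y)\,q_{\pol^{im^2+jn^2}}(y)$ for $F\in V(K_{i+j,im^2+jn^2})$. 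Substituting the formula for $\xi^*(g)$ and discarding the vanishing terms gives
\[
(f_1\star f_2)(x)=\lambda\sum_{y\in G_x}f_1(x+my)\,f_2(x+ny)\,q_{\pol^{im^2+jn^2}}(y),
\]
with $\lambda\in R^*$ the unit of Proposition~\ref{generaladd}, any scalar contributed by the two line-bundle identifications being absorbed into $\lambda$; this is the assertion.

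The step I expect to be the main obstacle is the description of $\iota^*$ as contraction against $q_{\pol^{im^2+jn^2}}$: one must check that the zero section is compatible with the product theta structure $\Theta_{i+j,im^2+jn^2}$, so that the induced map on finite theta functions is the naive coordinatewise one, and one must keep track of the unit it introduces. This is exactly the kind of bookkeeping with respect to the theta groups carried out in~\cite{ko76} and~\cite{ke89}; granting it, the remaining points are formal consequences of Lemma~\ref{isso}, Proposition~\ref{generaladd} and the identity $\Delta=\xi\circ\iota$.
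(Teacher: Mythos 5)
Your derivation is correct and follows the same route the paper intends: the paper states the theorem as a direct corollary of Proposition~\ref{generaladd} (with proofs delegated to~\cite{ko76} and~\cite{ke89}), and your argument simply makes that derivation explicit by writing $\Delta=\xi\circ\iota$, applying Proposition~\ref{generaladd} to $p_1^*f_1\otimes p_2^*f_2$, and interpreting restriction along the zero section in the second factor as contraction against $q_{\pol^{im^2+jn^2}}$. The compatibility bookkeeping you flag (product theta structure versus the zero section, units absorbed into $\lambda$) is exactly the content the paper defers to those references, so nothing essential is missing.
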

\noindent
The $\star$-product is defined as in~\cite[$\S$3]{MR34:4269}.
A proof of Theorem \ref{generalmult} in terms of the classical analytic theory
is given in~\cite{ko76}.
In~\cite{ke89} the author sketches
a proof of the general multiplication formula over a field of positive characteristic.
We remark that for $i=j=m=-n=1$ one obtains Mumford's
$2$-multiplication formula~\cite[$\S$3]{MR34:4269}.
\begin{corollary}
\label{mummult}
There exists a $\lambda \in R^*$ such that for all $(a,b) \in K_{i,j}$ we
have
\[
q_{\pol^i}(a) q_{\pol^j}(b) = \lambda \sum_{\xi(x,y)=(a,b)}
q_{\pol^{i+j}}(x) q_{\pol^{i m^2 + j n^2} }(y).
\]
\end{corollary}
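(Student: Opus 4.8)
The plan is to recognize the asserted identity as Mumford's Isogeny Theorem (\cite[$\S$1,Th.4]{MR34:4269}) for the isogeny $\xi$ and the product line bundle $\emm_{i,j}$, applied in the direction of the trace (summation over fibres); it is the trace-side companion of Theorem \ref{generalmult}. Two formal inputs are needed. First, for the product theta structure $\Theta_{i,j}=\Theta_i\times\Theta_j$ on $\emm_{i,j}=p_1^*\pol^i\otimes p_2^*\pol^j$ the canonical finite theta function factors as a tensor product, $q_{\emm_{i,j}}(a,b)=q_{\pol^i}(a)\,q_{\pol^j}(b)$, and likewise $q_{\emm_{i+j,\,im^2+jn^2}}(x,y)=q_{\pol^{i+j}}(x)\,q_{\pol^{im^2+jn^2}}(y)$; this is immediate from the construction of finite theta functions of a product of theta structures. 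Second, by Lemma \ref{isso} (which uses the hypothesis $im+jn=0$) one has $\xi^*\emm_{i,j}\cong\emm_{i+j,\,im^2+jn^2}$, and, as observed just before Proposition \ref{generaladd}, the product theta structures $\Theta_{i,j}$ and $\Theta_{i+j,\,im^2+jn^2}$ are $\xi$-compatible, so the Isogeny Theorem applies to this datum.

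Proposition \ref{generaladd} records the pullback half of that theorem, that is, the explicit form of $\xi^*\colon V(K_{i,j})\to V(K_{i+j,\,im^2+jn^2})$. Taking the adjoint with respect to the natural pairings $\langle u,v\rangle=\sum_z u(z)v(z)$ on the two finite-theta-function modules converts it into the trace map
\[
\xi_*\colon V(K_{i+j,\,im^2+jn^2})\longrightarrow V(K_{i,j}),\qquad (\xi_* h)(a,b)=\sum_{\xi(x,y)=(a,b)} h(x,y),
\]
which, up to the unit occurring in Proposition \ref{generaladd}, realizes the canonical surjection furnished by the Isogeny Theorem. Because the theta structures involved are $\xi$-compatible and symmetric, $\xi_*$ is equivariant for the theta-group actions and commutes with the symmetry involutions; hence it carries the distinguished vector $q_{\emm_{i+j,\,im^2+jn^2}}$ to a scalar multiple $\lambda_0\,q_{\emm_{i,j}}$ of the distinguished vector $q_{\emm_{i,j}}$, with $\lambda_0\in R^*$ (discussed below). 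Unfolding the identity $\xi_*\!\left(q_{\emm_{i+j,\,im^2+jn^2}}\right)=\lambda_0\,q_{\emm_{i,j}}$ with the tensor-product formulas above yields, for every $(a,b)\in K_{i,j}$,
\[
\sum_{\xi(x,y)=(a,b)} q_{\pol^{i+j}}(x)\,q_{\pol^{im^2+jn^2}}(y)=\lambda_0\,q_{\pol^i}(a)\,q_{\pol^j}(b),
\]
and the corollary follows with $\lambda=\lambda_0^{-1}$.

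The one step that is not mere bookkeeping is the claim $\xi_*\!\left(q_{\emm_{i+j,\,im^2+jn^2}}\right)=\lambda_0\,q_{\emm_{i,j}}$ with $\lambda_0$ a \emph{unit} of $R$: the module $V(K_{i,j})$ is not one-dimensional in general, so a dimension count does not suffice, and since $\xi_*$ is, up to a unit, the adjoint of the injection $\xi^*$ it is a priori only a surjection and could conceivably annihilate $q_{\emm_{i+j,\,im^2+jn^2}}$. So one must check both that $q_{\emm_{i,j}}$ is pinned down up to scalar inside $V(K_{i,j})$ by exactly the symmetry and normalization properties that $\xi_*$ preserves, and that the resulting scalar $\lambda_0$ does not reduce to $0$ in $\F_q$ --- the latter either by evaluating $\lambda_0$ as a Gauss-sum-type expression built from the quadratic form of $q_{\emm_{i+j,\,im^2+jn^2}}$ restricted to $\ker\xi$, or, more economically, by quoting the inversion formula for the $\star$-product from \cite[$\S$3]{MR34:4269}, which already carries out that computation. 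Everything else reduces to unwinding definitions.
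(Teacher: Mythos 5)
Your overall skeleton is the right one, and it is essentially what the paper leaves implicit: Corollary \ref{mummult} is the adjoint (trace) form of Proposition \ref{generaladd}, unfolded through the factorizations $q_{\emm_{i,j}}=q_{\pol^i}\otimes q_{\pol^j}$ and $q_{\emm_{i+j,\,im^2+jn^2}}=q_{\pol^{i+j}}\otimes q_{\pol^{im^2+jn^2}}$. The problem is precisely the step you single out, namely $\xi_*\bigl(q_{\emm_{i+j,\,im^2+jn^2}}\bigr)=\lambda_0\,q_{\emm_{i,j}}$ with $\lambda_0\in R^*$: the justification you offer (theta-group equivariance plus the symmetry involution pin $q_{\emm_{i,j}}$ down up to scalar) cannot work. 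The module $V(K_{i,j})$ is irreducible under its Heisenberg group, so no nonzero vector is singled out by equivariance under the operators that $\xi_*$ intertwines, and the symmetry involution only forces evenness, which cuts out a subspace of large dimension. Your fallbacks do not close the gap either: a Gauss-sum evaluation of $\lambda_0$ presupposes the proportionality $\xi_*\bigl(q_{\emm_{i+j,\,im^2+jn^2}}\bigr)\propto q_{\emm_{i,j}}$ that is in question, and ``quoting the inversion formula for the $\star$-product'' amounts to citing the multiplication formula one is trying to prove.

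The missing ingredient is geometric rather than representation-theoretic: via the rigidification, the finite theta function $q$ represents the ``evaluation at the zero section'' functional, i.e.\ $\sum_z g(z)\,q(z)$ equals, up to a unit, the value at $0$ of the section corresponding to $g$. Since $\xi(0)=0$ and the isomorphism of Lemma \ref{isso} restricts to an isomorphism on the fibres over the zero section, pulling back sections commutes with evaluation at $0$ up to a single unit of $R$. Now pair the formula of Proposition \ref{generaladd} against $q_{\emm_{i+j,\,im^2+jn^2}}$ and take $g=\delta_{(a,b)}$: the left-hand side becomes a unit times $q_{\pol^i}(a)\,q_{\pol^j}(b)$ by the above, the right-hand side becomes a unit times $\sum_{\xi(x,y)=(a,b)}q_{\pol^{i+j}}(x)\,q_{\pol^{im^2+jn^2}}(y)$, and the corollary follows with $\lambda\in R^*$ because both constants are units. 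Equivalently, and closer to the paper's presentation, take $f_1=\delta_a$ and $f_2=\delta_b$ in Theorem \ref{generalmult} and evaluate at the zero section, using that the $\star$-product corresponds to multiplication of sections; this yields the same identity. With that replacement for your equivariance argument, the rest of your unwinding is correct.
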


\subsection{Riemann's equations for level $2^\nu p$}
\label{riemanntwop}

We use the notation that has been introduced in
Section \ref{sigmasquare}.
Let $R$ be a noetherian local ring, $\ell>0$ a prime and
$\nu \geq 1$ an integer.
Suppose we are given an abelian scheme $A$ of relative
dimension $g$ over $R$.
Assume that we are given an ample symmetric line bundle $\pol$ of degree
$1$ on $A$ and a symmetric theta structure of type $Z_{2^\nu \ell}$
for the line bundle $\pol^{2^\nu \ell}$ where $Z_{2^\nu \ell}$ is as in Section
\ref{sigmasquare}.
We denote the theta null point
with respect to the theta structure $\Theta_{2^\nu \ell}$
by $(a_u)_{u \in Z_{2^\nu \ell}}$.
By symmetry we have $a_u = a_{-u}$ for all $u \in Z_{2^\nu \ell}$.
\newline\indent
The higher dimensional analogue of \emph{Riemann's equation}
for the case of a level-$2^\nu \ell$ theta structure is given by the
following theorem.
We consider quadruples $(v_i,w_i,x_i,y_i) \in Z_{2^{\nu}\ell}^4$ where $i=1,2$ as
equivalent if there exists a permutation matrix $P \in \mat{4}{\xz}$ such that
\[
(v_1+w_1,v_1-w_1,x_1+y_1,x_1-y_1)=(v_2+w_2,v_2-w_2,x_2+y_2,x_2-y_2)P.
\]
Let $\hat{Z}_2$ be the character group of $Z_{2}$.
\begin{theorem}
\label{riemannrelations}
For equivalent quadruples $(v_1,w_1,x_1,y_1),(v_2,w_2,x_2,y_2) \in
Z_{2^{\nu} \ell}^4$ and for all $\chi \in \hat{Z}_2$ the following equality holds
\begin{eqnarray*}
\lefteqn{ \sum_{t \in Z_2} \chi(t) a_{v_1+t} a_{w_1+t} \sum_{s \in Z_2} \chi(s)a_{x_1+s} a_{y_1+s} } \\
& & =\sum_{t \in Z_2} \chi(t) a_{v_2+t} a_{w_2+t} \sum_{s \in Z_2} \chi(s) a_{x_2+s} a_{y_2+s}.
\end{eqnarray*}
\end{theorem}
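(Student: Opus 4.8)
The plan is to derive this ``Riemann equation'' for level $2^\nu\ell$ from the classical Riemann relations for level $2$ (equivalently, from Mumford's $2$-multiplication formula, as packaged in Corollary~\ref{mummult}), using the standard device of splitting the theta group $Z_{2^\nu\ell}$ as $Z_2 \times Z_{2^{\nu-1}\ell}$ and tracking the $\hat Z_2$-isotypic components. The left side of the asserted identity is visibly a product of two sums of the form $\sum_{t\in Z_2}\chi(t)\,a_{u+t}a_{u'+t}$; the key observation is that such a sum is, up to a unit, a value of the $\star$-product of two finite theta functions of type $Z_{2^{\nu-1}\ell}$ evaluated at a point determined by $u+u'$ and twisted by the character $\chi$. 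Concretely, I would introduce the descent to level $2$: writing each index in $Z_{2^\nu\ell}$ as $(\epsilon, \bar u)$ with $\epsilon\in Z_2$, the inner sum over $t\in Z_2$ with character $\chi$ computes the component of a product theta function, and Mumford's isogeny-theorem machinery (Theorem~\ref{generalmult} with $i=j=m=-n=1$, i.e.\ the duplication case) identifies it with $q_{\pol^{\,?}}$-values attached to the sum $v_1+w_1$ and the difference $v_1-w_1$.

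The main steps, in order, would be: (1) Fix the splitting $Z_{2^\nu\ell} \cong Z_2 \oplus Z_{2^{\nu-1}\ell}$ and the induced decomposition of the theta structure, using that $2$ is invertible in the relevant sense on the odd part and that the level-$2$ part carries the symmetric structure; record that $a_u=a_{-u}$. (2) Apply Corollary~\ref{mummult} (or Theorem~\ref{generalmult}) in the duplication case to rewrite $\sum_{t\in Z_2}\chi(t)a_{v+t}a_{w+t}$ as $\lambda$ times a single finite-theta value indexed by the pair $(v+w,\,v-w)$ together with the character data $\chi$; here $\lambda\in R^*$ is the projective-factor unit supplied by the Isogeny Theorem and is the \emph{same} unit for all four inner sums since the isogeny $\xi$ and the theta structures are fixed. (3) Multiply the two rewritten inner sums on each side; the product is then an expression depending only on the unordered data $\{v_i+w_i, v_i-w_i\}$ and $\{x_i+y_i, x_i-y_i\}$ — in fact on the multiset $(v_i+w_i,v_i-w_i,x_i+y_i,x_i-y_i)$ up to permutation, by the symmetry $a_u=a_{-u}$ which trades $v+w \leftrightarrow -(v+w)$ freely and the commutativity of the product of finite theta functions. (4) Invoke the equivalence hypothesis: since $(v_1+w_1,v_1-w_1,x_1+y_1,x_1-y_1)$ and $(v_2+w_2,v_2-w_2,x_2+y_2,x_2-y_2)$ differ by a permutation matrix $P$, the two products coincide, and the common unit $\lambda^2$ (appearing on both sides) cancels, yielding the stated equality with no residual constant.

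The step I expect to be the genuine obstacle is (2)–(3): making precise that the character-twisted inner sum $\sum_{t\in Z_2}\chi(t)a_{v+t}a_{w+t}$ is an honest value of a $\star$-product (or $q_{\pol^i}$-type function) in the descended theta coordinates, and — crucially — that the projective-factor units are identical across the four inner sums so that only $\lambda^2$ survives and cancels. This requires unwinding the compatibility of the product theta structures $\Theta_2\times\Theta_{2^{\nu-1}\ell}$ with $\Theta_{2^\nu\ell}$ and checking that the $\chi$-isotypic projection interacts correctly with the isogeny $\xi=\left(\begin{smallmatrix}1&1\\1&-1\end{smallmatrix}\right)$ on $A^2$, exactly as in the proof of Theorem~\ref{thetarelations} where the key point was that the two sides of (\ref{key1}) and (\ref{key2}) shared a common left-hand side. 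By contrast, step (4) is purely formal once the permutation-invariance in (3) is in hand. A cleaner alternative, if the bookkeeping in (2) becomes heavy, is to prove the identity first over $\C$ using the classical Riemann theta relations and the functional equation, then descend to arbitrary noetherian local $R$ by the usual specialization argument (the identity is a polynomial relation with $R^*$-coefficients among the $a_u$, and holds on a dense set), mirroring the ``sketch of proof'' strategy already adopted for Theorem~\ref{generalmult}.
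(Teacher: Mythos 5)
Your overall plan — factor each $\chi$-twisted inner sum by a duplication-type formula so that the product of the two sums depends only on the multiset $(v+w,v-w,x+y,x-y)$, then conclude by the permutation hypothesis, the symmetry $a_u=a_{-u}$ and cancellation of the common unit — is the right one, and it is in substance the route the paper takes: the paper itself simply cites Mumford, \emph{Equations defining abelian varieties I}, $\S 3$, and its in-text derivation (Section \ref{riemanntwop1}) proves Theorem \ref{riemannquad} from the character-twisted duplication relations (\ref{ptwo})--(\ref{pthree}) and recovers Theorem \ref{riemannrelations} by evaluating the sections at the origin.

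The genuine gap is in your step (2), which you yourself flag as the obstacle but do not resolve. Corollary \ref{mummult} with $i=j$, $m=-n=1$ only yields the \emph{untwisted} statement: unwinding the fibre of $\xi(x,y)=(x+y,x-y)$ gives $q_{\pol^{2^{\nu-1}\ell}}(v+w)\,q_{\pol^{2^{\nu-1}\ell}}(v-w)=\lambda\sum_{z\in Z_2}a_{v+z}\,a_{w+z}$, i.e.\ exactly the $\chi=1$ component of the inner sum. For a nontrivial $\chi\in\hat{Z}_2$ the sum $\sum_{t\in Z_2}\chi(t)a_{v+t}a_{w+t}$ is a different isotypic component which Corollary \ref{mummult} does not see, and it is not ``a single finite-theta value indexed by $(v+w,v-w)$'': the correct statement (Mumford $\S 3$, quoted as (\ref{pthree}) in Section \ref{riemanntwop1}) factors it as a product of two $\chi$-twisted sums indexed by \emph{halves} of $v+w$ and $v-w$ in $Z_{2^{\nu+1}\ell}$; moreover each half is only defined up to a $2$-torsion translate, which rescales the corresponding factor by a sign $\chi(\cdot)$, so one must choose the same halves for the two equivalent quadruples in order for these signs to cancel. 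As written, your argument therefore proves the identity only for the trivial character. Once Corollary \ref{mummult} is replaced by the twisted relation (\ref{pthree}) (or, as in your fallback, the identity is checked over $\C$ via the classical addition formula and then specialized), your steps (3)--(4) do go through and give the theorem.
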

\noindent
We refer to~\cite[$\S$3]{MR34:4269} for a proof of this theorem.
\newline\newline
\bfseries Example $g=1$, $p=3$, $\nu=1$: \mdseries
\begin{eqnarray*}
& 0= &a_{1}a_{0}^2a_{3}-2a_{1}^2a_{2}^2+a_{2}a_{0}a_{3}^2\\
& 0= &a_{2}a_{0}^3+a_{1}a_{0}^2a_{3}-a_{2}^4-2a_{1}^2a_{2}^2-a_{1}^4+a_{1}a_{3}^3+a_{2}a_{0}a_{3}^2
\end{eqnarray*}
\bfseries Example $g=1$, $p=5$, $\nu=1$: \mdseries
\begin{eqnarray*}
& 0= &-a_{5}^2a_{2}a_{4}+a_{2}^2a_{4}^2+a_{3}^2a_{4}^2-a_{1}a_{0}^2a_{3}+a_{1}^2a_{2}^2-a_{5}^2a_{1}a_{3}+a_{1}^2a_{3}^2-a_{2}a_{0}^2a_{4}\\
& 0= &-a_{1}^2a_{0}a_{4}+a_{2}a_{3}^2a_{4}-a_{5}a_{1}a_{4}^2+a_{1}a_{2}^2a_{3}\\
& 0= &-a_{5}a_{0}a_{3}a_{4}+2a_{1}a_{2}a_{3}a_{4}-a_{5}a_{1}a_{2}a_{0}\\
& 0= &-a_{5}^2a_{2}a_{0}+2a_{1}^2a_{4}^2-a_{5}a_{0}^2a_{3}\\
& 0= &a_{2}^3a_{0}-a_{1}^3a_{3}+a_{5}a_{3}^3+a_{2}a_{0}a_{3}^2+a_{5}a_{2}^2a_{3}-a_{1}a_{3}a_{4}^2-a_{2}a_{4}^3-a_{1}^2a_{2}a_{4}\\
& 0= &-2a_{1}a_{2}a_{3}a_{4}+a_{5}^2a_{1}a_{3}-a_{1}^2a_{3}^2+a_{5}a_{1}a_{2}a_{0}+a_{2}a_{0}^2a_{4}-a_{2}^2a_{4}^2+a_{5}a_{0}a_{3}a_{4}\\
& 0= &-a_{5}^2a_{2}a_{4}+a_{3}^2a_{4}^2-a_{5}a_{0}a_{3}a_{4}+a_{1}^2a_{2}^2+2a_{1}a_{2}a_{3}a_{4}-a_{5}a_{1}a_{2}a_{0}-a_{1}a_{0}^2a_{3}\\
& 0= &a_{5}^2a_{0}a_{4}-2a_{2}^2a_{3}^2+a_{5}a_{1}a_{0}^2\\
& 0= &a_{2}a_{0}a_{3}^2+a_{5}a_{2}^2a_{3}-a_{1}a_{3}a_{4}^2-a_{1}^2a_{2}a_{4}\\
& 0= &-a_{1}^2a_{0}a_{4}+a_{2}^3a_{4}+a_{1}a_{3}^3-a_{0}a_{4}^3-a_{5}a_{1}^3-a_{5}a_{1}a_{4}^2+a_{2}a_{3}^2a_{4}+a_{1}a_{2}^2a_{3}\\
& 0= &a_{2}a_{0}^3-a_{1}^4+a_{5}a_{0}^2a_{3}+a_{5}^3a_{3}+a_{5}^2a_{2}a_{0}-2a_{1}^2a_{4}^2-a_{4}^4\\
& 0= &a_{5}^3a_{1}-a_{2}^4+a_{0}^3a_{4}+a_{5}^2a_{0}a_{4}-a_{3}^4-2a_{2}^2a_{3}^2+a_{5}a_{1}a_{0}^2
\end{eqnarray*}

\subsection{Theta null points of level $2^\nu p$}
\label{extheta2p}

Let $\F_q$ be a finite field of characteristic $p>2$. Let $A_{\F_q}$
be an ordinary abelian variety over $\F_q$. Suppose that we are given
a semi-canonical symmetric product theta structure $\Theta_{2^\nu
  p}=\Theta_{2^\nu} \times \Theta_p$ as in Section \ref{sigmasquare}.
We denote the theta null point with respect to the theta structure
$\Theta_{2^\nu }$ by $(a_u)_{u \in Z_{2^\nu }}$.  We can assume that
there exists a $v \in Z_{2^\nu}$ such that $a_v$ is a unit in $\Z_q$.
Here $Z_{2^\nu}$ is considered as a subgroup of $Z_{2^\nu p}$ via the
map $j \mapsto pj$.  Let $I$ be the ideal of the multivariate
polynomial ring $\F_q[x_u|u \in Z_{2^\nu p}]$ which is spanned by the
relations of Theorem \ref{riemannrelations}, taken modulo $p$,
together with the symmetry relations $a_u = a_{-u}$ for all $u \in
Z_{2^\nu p}$.  Let $J$ be the image of $I$ under the specialization
map
\begin{eqnarray*}
\F_q[x_u|u \in
Z_{2^\nu p}] \rightarrow \F_q[x_u|u \in
Z_{2^\nu p},2^\nu u \not=0], \quad x_u \mapsto \left\{
\begin{array}{l@{,\hsp}l}
\frac{a_u}{a_v} & \mathrm{if} \quad u \in Z_{2^\nu} \\
\frac{x_u}{a_v} & \mathrm{else}
\end{array} \right. .
\end{eqnarray*}
The following Theorem is proven in Section \ref{secfinit}.
\begin{theorem}\label{multiplicity}
If $\nu \geq 2$, then the ideal $J$ defines a $0$-dimensional affine
algebraic set. 
\end{theorem}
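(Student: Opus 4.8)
The plan is to bound the number of $\overline{\F}_q$-points of the affine algebraic set $V(J)$ defined by $J$. Since Krull dimension is preserved under the base extension from $\F_q$ to $\overline{\F}_q$, and a finitely generated algebra over a field is $0$-dimensional exactly when it has only finitely many maximal ideals after extension to the algebraic closure, it suffices to prove that $V(J)(\overline{\F}_q)$ is finite. Spelling out the definition of $J$, a point of this set is a tuple $(b_u)_{u\in Z_{2^\nu p}}$ with entries in $\overline{\F}_q$ which is symmetric, $b_u=b_{-u}$, which satisfies all the relations of Theorem \ref{riemannrelations} taken modulo $p$, and whose ``level-$2^\nu$ part'' is prescribed: $b_u=\bar a_u/\bar a_v$ for every $u$ in the subgroup $Z_{2^\nu}=pZ_{2^\nu p}\subseteq Z_{2^\nu p}$. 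In particular $b_v=1$, so $(b_u)$ is nonzero, and the division by $\bar a_v$ in the specialization map simply amounts to choosing the affine chart $\{x_v\neq 0\}$ of the projective space on which the Riemann relations, being homogeneous of degree $4$, naturally live. So the assertion is precisely that the level-$2^\nu$ theta null point of $A_{\F_q}$ admits only finitely many extensions to a solution of the Riemann equations of level $2^\nu p$.

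The first and main step is to establish a \emph{completeness} property: any tuple $(b_u)$ as above is the theta null point of an abelian variety $A_b$ over $\overline{\F}_q$ carrying a symmetric theta structure of type $Z_{2^\nu p}$ of the product form $\Theta_{2^\nu}\times\Theta_p$, where $\Theta_p$ is the canonical theta structure attached to some isomorphism $Z_p\iso A_b[p]^{\mathrm{et}}$ as in~(\ref{trivp}); note that such an $A_b$ is then automatically ordinary. That the symmetric Riemann relations (together with the pertinent auxiliary relations, themselves consequences of Theorem \ref{riemannrelations}) cut out the theta-null locus of the moduli space of abelian varieties with theta structure is classical when the level is divisible by $8$; see~\cite{MR36:2621} and~\cite[$\S$3]{MR34:4269}. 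The hypothesis $\nu\geq 2$ only guarantees that $2^\nu p$ is divisible by $4$, but it is also divisible by the odd prime $p\geq 3$, and the strategy is to exploit this extra odd factor, via the general multiplication formula of Theorem \ref{generalmult} relating levels $i$, $j$ and $i+j$, together with the fact that the level-$2^\nu$ part of the theta null point is already fixed, in order to carry through Mumford's reconstruction of the group law and the theta structure from the theta constants. As we are working in characteristic $p$, one needs the characteristic-$p$ form of these arguments, as in the proof of Theorem \ref{generalmult}. \emph{This is the step I expect to be the main obstacle}; what follows is comparatively formal.

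Granting this, the proof concludes as follows. By the construction of the product theta structure~\cite[$\S$3.2]{ckl08}, the level-$2^\nu$ part $(b_u)_{u\in Z_{2^\nu}}$ of the theta null point of $(A_b,\pol^{2^\nu p},\Theta_{2^\nu}\times\Theta_p)$ is the theta null point of $(A_b,\pol^{2^\nu},\Theta_{2^\nu})$, normalized so that the coordinate at $v$ equals $1$; hence it equals the prescribed tuple $(\bar a_u/\bar a_v)_{u\in Z_{2^\nu}}$. Since $\nu\geq 2$ we have $2^\nu\geq 4$, so by~\cite[$\S$3]{MR34:4269} a level-$2^\nu$ theta null point reconstructs, via Riemann's addition formula, the polarized abelian variety together with its $2^\nu$-theta structure up to a finite ambiguity; at level $2$ this would fail, which is a further reason the hypothesis $\nu\geq 2$ is needed. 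Consequently there are only finitely many possibilities for the triple $(A_b,\pol,\Theta_{2^\nu})$, and for each of them the only remaining datum is the isomorphism $Z_p\iso A_b[p]^{\mathrm{et}}$ determining $\Theta_p$, of which there are exactly $|\mathrm{GL}_g(\F_p)|$, each yielding a single theta null point of level $2^\nu p$. Therefore $V(J)(\overline{\F}_q)$ is finite and $V(J)$ is $0$-dimensional. (As a heuristic consistency check one may observe that prescribing the level-$2^\nu$ coordinates imposes $\#(Z_{2^\nu}/\{\pm 1\})-1=2^{\nu g-1}+2^{g-1}-1$ linear conditions, which exceeds $\dim\mathcal{A}_g=g(g+1)/2$ as soon as $\nu\geq 2$, whereas for $\nu=1$, $g=1$ the two numbers merely coincide.)
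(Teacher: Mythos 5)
Your argument hinges on the step you yourself flag as the main obstacle, and that step is a genuine gap rather than a routine verification. You need the ``completeness'' statement that every geometric solution of $J$ is the theta null point of some abelian variety equipped with a symmetric theta structure of type $Z_{2^\nu p}$ of product form. The classical results you invoke for this (\cite{MR36:2621}, \cite[$\S$3]{MR34:4269}) require the level to be prime to the characteristic (and, for the moduli description, divisible by $8$); here the level $2^\nu p$ is divisible by $p=\mathrm{char}(\F_q)$, and the Remark in Section \ref{secfinit} states explicitly that this description ``is not available under the hypothesis that we consider'' and that the scheme cut out by $J$ is singular when $\ell=p$, so one cannot even lift to characteristic zero to recover it. The general multiplication formula (Theorem \ref{generalmult}) lets one write down relations, but it does not by itself reconstruct a theta group action or an abelian variety from an abstract solution of the equations in characteristic $p$ at level divisible by $p$; no argument is offered for that, and the heuristic dimension count at the end is not a substitute.

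The paper's proof goes a different way precisely to avoid any moduli interpretation of level-$2^\nu p$ theta constants. Via the Chinese remainder decomposition $Z_{2^\nu p}\cong Z_{2^\nu}\times Z_p$, a solution $(c_v)$ is sliced into $p$ tuples indexed by $Z_{2^\nu}$; Lemma \ref{lemmaI} shows that each slice satisfies the level-$2^\nu$ Riemann equations and hence defines a closed point $P_i$ of the fixed abelian variety $A$ itself, embedded by $\Theta_{2^\nu}$ (this is where $\nu\geq 2$ is used), and Lemma \ref{lemmaII} shows by induction, using the addition formula of Theorem \ref{classicalbis} inside $A$, that $P_i=i\cdot P_1$, that $P_1$ is a $p$-torsion point, and that the entire solution is determined by $P_1$. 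Finiteness then follows from the finiteness of $A[p](\overline{\F}_q)$. In other words, the geometric solutions are evaluations of the level-$2^\nu$ theta coordinates of the given $A$ at multiples of a $p$-torsion point, not a priori theta null points of new abelian varieties with level-$2^\nu p$ structure; to salvage your outline you would have to prove your completeness claim in characteristic $p$ for level divisible by $p$, which is essentially a new (and harder) result than the theorem itself.
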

\noindent
By the primitive element theorem
there exists $f(x) \in \F_q[x]$ such that
\[
\F_q[x_u|u \in Z_{2^\nu p},2^\nu u \not=0]/\mathrm{rad}(J) \cong \F_q[x]/(f).
\]
The theta null point $(a_u)_{u \in Z_{2^\nu p}}$ induces an element $z
\in \F_q$ such that $f(z)=0$.
Generically, one can obtain the polynomial $f$ by a Groebner
basis computation.
The Theorem \ref{multiplicity} enables one to calculate the full theta
null point $(a_u)_{u \in Z_{2^\nu p}}$ over $\F_q$ from the knowledge
of its $2^\nu$-torsion part.  As a consequence, by means of the
well-known Thomae formulas and a Groebner basis computation algorithm,
one can produce arbitrary theta null points of level $2^\nu p$, which
correspond to ordinary hyperelliptic curves over $\F_q$.

We remark that in the case $\nu=1$, we have computationally verified
in many cases that the conclusion of Theorem \ref{multiplicity}
still holds.

\subsection{A generalized trace formula}
\label{gtrace}

Let $A$ be an abelian scheme over $\Z_q$. We assume that $A$ has
ordinary reduction and that it is the canonical lift of the reduction
$A_{\F_q}$.  Suppose that $\Theta_{2^\nu p} = \Theta_{2^\nu} \times
\Theta_p$ is a semi-canonical symmetric product theta structure over
$\Z_q$ of type $Z_{2^\nu p}$ for $\pol^{2^\nu p}$.  Let $(a_u)_{u \in
  Z_{2^\nu p}}$ denote the theta null point with respect to the theta
structure $\Theta_{2^\nu p}$.  \newline\indent Let $F \in
\mathrm{End}_{\F_q}(A_{\F_q})$ be the absolute Frobenius endomorphism
of $A_{\F_q}$, and let $\ell$ be a prime different from the
characteristic $p$ of $\F_q$.  We
denote the $\ell$-adic Tate module of $A_{\F_q}$ by
$T_\ell(A_{\F_q})$.  Recall that the $\ell$-adic Tate module is a free
$\Z_\ell$-module of rank $2g$, where $g$ is the dimension
of $A_{\F_q}$.  The absolute Frobenius morphism $F$
induces a $\Z_\ell$-linear map $\rho_\ell(F)$ on $T_\ell(A_{\F_q})$ which
corresponds, once a basis of $T_\ell(A_{\F_q})$ is chosen, to a $(2g
\times 2g)$-matrix $M_F$ with coefficients in $\Z_\ell$.  Because of
the ordinary reduction, we know that $M_F$ has precisely $g$ Eigenvalues
$\pi_1, \ldots , \pi_g$, which are units modulo $p$~\cite[Ch.V]{de72}.
\begin{theorem}\label{formula1}
  Suppose that $\Theta_{2^\nu}$ is defined over $\Z_q$. Then the
  product $\pi_1 \cdot \ldots \cdot \pi_g$ is an element of the ring
  $\Z_q$ and we have
\begin{equation}
  \pi_1 \cdot \ldots \cdot \pi_g = \Norm_{\Q_q / \Q_p} \left(
    \frac{\sum_{u \in Z_{2^\nu} }a_u}{\sum_{u \in Z_{2^\nu p}}a_u} \right).
\end{equation}
\end{theorem}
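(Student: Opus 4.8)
\textbf{Proof strategy for Theorem~\ref{formula1}.}
The plan is to realise $\pi_1\cdots\pi_g$ as the multiplier of the Frobenius lift acting on theta null points, and then to read off the formula from Mumford's Isogeny Theorem. Since $A/\Z_q$ is the canonical lift, endomorphisms and the connected-\'etale sequence of $A[p^\infty]$ lift from $A_{\F_q}$, so the relative $p$-Frobenius $A_{\F_q}\to A_{\F_q}^{(p)}$ lifts to an isogeny $\phi:A\to A^{(\sigma)}$ over $\Z_q$. Its kernel $K$ has order $p^g$, reduces to the connected part $A[p]^{0}=\mu_p^g$ of $A[p]$, and --- crucially --- is exactly the Lagrangian of $(A[p],e^{\pol^p})$ cut out by the canonical theta structure $\Theta_p$, because $\Theta_p$ is built from $A[p]^{\mathrm{et}}$ in~\cite{ca05a,ca05b}. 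Since $p$ is prime to $2^\nu$, the subgroup $K$ is disjoint from $A[2^\nu]$, so $\phi$ descends the product theta structure $\Theta_{2^\nu p}=\Theta_{2^\nu}\times\Theta_p$ to a level-$2^\nu$ theta structure on $A^{(\sigma)}$ with $\phi^*\pol_{A^{(\sigma)}}^{2^\nu}\cong\pol^{2^\nu p}$, a structure which only involves the $\Theta_{2^\nu}$-factor.

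This is where the hypothesis that $\Theta_{2^\nu}$ be defined over $\Z_q$ enters: dividing by the prime-to-$2^\nu$ subgroup $K$ leaves the $\Theta_{2^\nu}$-factor untouched, and $A^{(\sigma)}$ is the canonical lift of $A_{\F_q}^{(p)}$, so the level-$2^\nu$ theta structure induced on $A^{(\sigma)}$ is the $\sigma$-conjugate of $\Theta_{2^\nu}$, whence its theta null point is the $\sigma$-conjugate of that of $(A,\Theta_{2^\nu})$. In particular the sum of its coordinates equals $\bigl(\sum_{u\in Z_{2^\nu}}a_u\bigr)^{\sigma}$ (recall $Z_{2^\nu}\hookrightarrow Z_{2^\nu p}$ via $j\mapsto pj$). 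On the other hand, the Isogeny Theorem~\cite[$\S$1,Th.4]{MR34:4269}, applied to $\phi$ as in Section~\ref{genmult}, expresses the level-$2^\nu$ theta null point of $A^{(\sigma)}$, up to a single unit $\lambda\in\Z_q^{*}$, as a partial summation of $(a_u)_{u\in Z_{2^\nu p}}$; carrying out that summation over all coordinates collapses it to $\lambda\sum_{u\in Z_{2^\nu p}}a_u$. Comparing the two evaluations gives
\[
\Bigl(\sum_{u\in Z_{2^\nu}}a_u\Bigr)^{\sigma}=\lambda\sum_{u\in Z_{2^\nu p}}a_u,\qquad\text{hence}\qquad \lambda=\frac{\bigl(\sum_{u\in Z_{2^\nu}}a_u\bigr)^{\sigma}}{\sum_{u\in Z_{2^\nu p}}a_u}.
\]

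It remains to identify $\lambda$ with the relevant piece of Frobenius. Using the splitting $A[p^\infty]=A[p^\infty]^{0}\oplus A[p^\infty]^{\mathrm{et}}$ over $\Z_q$ (valid for the canonical lift) and that $\phi$ is an isomorphism on the \'etale parts, I would show $\lambda=\det\bigl(\phi\mid T_pA[p^\infty]^{\mathrm{et}}\bigr)$ --- by tracking the theta-group cocycles through the Isogeny Theorem, or, more robustly, by composing $\phi$ with the Verschiebung lift $A^{(\sigma)}\to A$, whose multiplier is accessible since $\widehat{V}\widehat{F}=[q]$ and the multiplier of $[q]$ is classical, and then dividing out. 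Iterating $\phi$ around the cycle $A\to A^{(\sigma)}\to\cdots\to A^{(\sigma^d)}=A$ (with $q=p^d$) realises the lift of the $q$-power Frobenius endomorphism $F$, whose determinant on $T_pA[p^\infty]^{\mathrm{et}}$ is $\pi_1\cdots\pi_g$; hence $\pi_1\cdots\pi_g=\prod_{i=0}^{d-1}\sigma^i(\lambda)=\Norm_{\Q_q/\Q_p}(\lambda)$, which therefore lies in $\Z_q$ --- in fact in $\Z_p$, since the unit roots of $\chi_F$ form a factor of $\chi_F$ over $\Z_p$ by the Newton polygon. Taking $\Norm_{\Q_q/\Q_p}$ of the displayed expression for $\lambda$ and using $\Norm_{\Q_q/\Q_p}(x^{\sigma})=\Norm_{\Q_q/\Q_p}(x)$ yields the formula of the theorem.

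\textbf{Main obstacle.} The delicate point is the precise identification $\lambda=\det\bigl(\phi\mid T_pA[p^\infty]^{\mathrm{et}}\bigr)$, with the correct orientation (\'etale, not connected, part; and $\lambda$ rather than $\lambda^{-1}$ or $\pm\lambda$): a priori the Isogeny Theorem yields only a unit $\lambda\in\Z_q^{*}$, and fixing the normalisation demands either an explicit theta-group cocycle computation, the reduction to the multiplier of $[q]$ via the Verschiebung lift sketched above, or a CM/degeneration test case. A secondary point, to be extracted carefully from~\cite{ca05a,ca05b}, is that $\ker\phi$ is precisely the Lagrangian selected by the canonical $\Theta_p$, so that the descent of $\Theta_{2^\nu p}$ along $\phi$ is the one used above.
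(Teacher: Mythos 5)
Your route is genuinely different from the paper's — the paper never touches the Isogeny Theorem here; it fixes an embedding $\psi:\C_p\to\C$, rewrites the $a_u$ as classical theta constants (Lemma \ref{lemma1}), collapses the two sums by the Fourier identity of Lemma \ref{lemma3} into single level-$2$ constants at periods $(2^\nu/p)\Omega$ and $2^\nu p\,\Omega$, and identifies the norm of their ratio with $\pi_1\cdots\pi_g$ via an explicit analytic lift of Frobenius and Tate's theorem (Lemma \ref{lemma2}). Measured against that, your sketch has a concrete misstep. The kernel of the Frobenius lift $\phi:A\to A^{(\sigma)}$ is the connected (multiplicative) Lagrangian, i.e.\ it sits in the $K_2$-side of the theta group; for a quotient by a subgroup of $K_2$ the Isogeny Theorem expresses the level-$2^\nu$ theta null point of $A^{(\sigma)}$ as the \emph{restriction} $(a_{pu})_{u\in Z_{2^\nu}}$ of the level-$2^\nu p$ null point, not as a partial summation. (Analytic sanity check in dimension $1$: with $a_u=\theta[\begin{smallmatrix}0\\u/(2p)\end{smallmatrix}](0,\tau/(2p))$, the quotient by the toric subgroup has level-$2$ nulls $\theta[\begin{smallmatrix}0\\v/2\end{smallmatrix}](0,\tau/(2p))=a_{pv}$, whereas fiberwise sums of the $a_u$ produce the nulls at period $p\tau$, i.e.\ the quotient in the Verschiebung direction.) So the full sum $\sum_{u\in Z_{2^\nu p}}a_u$ is attached to the quotient by the lifted \'etale subgroup, not to $\phi$, and your displayed identity $\bigl(\sum_{u\in Z_{2^\nu}}a_u\bigr)^\sigma=\lambda\sum_{u\in Z_{2^\nu p}}a_u$ does not follow from a single application of the Isogeny Theorem to $\phi$: pairing the restricted sum with the full sum forces you to play Frobenius against Verschiebung, which is exactly why the paper's algebraic relation of this kind (Theorem \ref{thetarelations}) is a $p^2$-correspondence carrying $\sigma^2$, not $\sigma$.

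The second, and decisive, gap is one you flag yourself: the identification of the Isogeny-Theorem scalar $\lambda$ with $\det\bigl(\phi\mid T_pA[p^\infty]^{\mathrm{et}}\bigr)$ is left as the ``main obstacle'', with only possible strategies indicated. But that normalization \emph{is} the theorem: everything else in your outline only produces some unit whose norm is a priori unrelated to $\pi_1\cdots\pi_g$. The paper's proof exists precisely to settle this point, and it does so analytically: after the reductions of Lemmas \ref{lemma1} and \ref{lemma3}, Lemma \ref{lemma2} exhibits the unit-eigenvalue product as the explicit factor relating $\theta_2\bigl[\begin{smallmatrix}\epsilon_1\\ \epsilon_2\end{smallmatrix}\bigr](0,2^\nu\Omega)^2$ and $\theta_2\bigl[\begin{smallmatrix}\epsilon_1\\ \epsilon_2\end{smallmatrix}\bigr](0,2^\nu p^n\Omega)^2$ along the analytic isogeny lifting the $q$-Frobenius of the auxiliary quotient $A'$, and the hypothesis that $\Theta_{2^\nu}$ is $\Z_q$-rational is what makes $A'$ an $\F_q$-isogenous variety so that Tate's theorem transfers the eigenvalues back to $A_{\F_q}$. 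If you want to carry out your algebraic alternative, you must both repair the Frobenius/Verschiebung bookkeeping above and supply this cocycle (or Verschiebung-composition) computation in full; as written, the proposal is an outline with a wrong intermediate identity and the central step missing.
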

\noindent
Here $Z_{2^\nu}$ is considered as a subgroup of $Z_{2^\nu p}$
via the map $j \mapsto pj$.

The rest of this section is devoted to the proof of Theorem
\ref{formula1}.  We first fix some additional notations. If $\pol$ is
a line bundle on an abelian variety, we denote by $K(\pol)$ the kernel
of the isogeny $A \rightarrow \mathrm{Pic}^0_A$ induced by $\pol$. Denote by
$\gtheta(\pol)$ the theta group associated to $\pol$ (see~\cite[pp.
289]{MR34:4269}).  For any positive integer $n$, we denote the
Heisenberg group of type $Z_n$ by $\mathcal{H}(Z_n)$~\cite[pp.
161]{MR2062673}.  Denote by $\hat{Z}_n$ the dual of $Z_n$, we have by
definition $\mathcal{H}(Z_n)= \xg_m \times Z_n \times \hat{Z}_n$
together with the group law defined by
$$(\alpha, x, l). (\alpha', x', l')=(\alpha.\alpha' l'(\alpha), x+x', l.l').$$ 
where $(\alpha, x, l)$ and
$(\alpha', x', l')$ are points of $\mathcal{H}(Z_n)$.

During the course of the proof, as we are working with schemes over
different base rings, to avoid ambiguity, we recall the base ring in
subscript. In particular, we let $A=A_{\Z_q}$, $\pol = \pol_{\Z_q}$
and $\Theta_{2^\nu p}= \Theta_{\Z_q, 2^\nu p}$.  We recall that
$\Theta_{2^\nu p}$ induces a decomposition $K(\pol)=K_1(\pol^{2^\nu
  p}) \times K_2(\pol^{2^\nu p})$ into isotropic subgroups
$K_1(\pol^{2^\nu p})$ and $K_2(\pol^{2^\nu p})$ for the commutator
pairing.

We fix an embedding $\psi : \C_p\rightarrow \C$ where $\C_p$ is the
completion of the algebraic closure of $\Q_p$~\cite[Ch.3]{MR1760253}.
The base extended abelian variety $A_\C = A_{\Z_q} \times_\psi
Spec(\C)$ is a complex variety with a polarization $\pol^{2^\nu p}_\C$
defined by $\pol^{2^\nu p}_{\C}=\pol^{2^\nu p}_{\Z_q} \otimes_{\psi}
\C$. We remark that $K(\pol^{2^\nu p}_\C)$ comes equipped with a
Lagrangian decomposition which is inherited from the theta structure
$\Theta_{2^\nu p,\C}=\Theta_{2^\nu p} \otimes \C$.  From the above
decomposition we deduce the period matrix $(I \Omega)$ with $I$ the
$g$ dimensional unity matrix and $\Omega$ an element of $\Si_g$ the
$g$ dimensional Siegel upper half space. In the following, for any
$\Omega \in \Si_g$, we denote by $\Lambda_\Omega$ the lattice $\Z^g +
\Omega \Z^g$.  If we let $A_{an}=\C^g / \Lambda_\Omega$, we have an
analytic isomorphism $j_{an} : A_\C \rightarrow A_{an}$. Let $\kappa :
\C^g \rightarrow \C^g / \Lambda_{\Omega}$ be the canonical projection.

We can suppose that $\Omega$ is chosen such that the $p$-torsion
points of $A_{an}$, given by $\kappa((1/p).\Z^g)$ corresponds via
$j_{an}^{-1}$ to a canonical lift of the maximal \'etale quotient of
$A_{\Z_q}[p]$, where $A_{\Z_q}$ is identified to $A_{\C}$ via $\psi$.

For $\epsilon_1, \epsilon_2, l \in \Z$, we define the theta function with
rational characteristics as
\begin{equation}\label{thetafunction}
  \theta_l\carac \epsilon_1, {\epsilon_2} (z,\Omega) = \sum_{n \in \Z^g} \exp \Big[ \pi i ^t(n
  + \frac{\epsilon_1}{l}) \Omega (n+\frac{\epsilon_1}{l}) + 2 \pi i ^t
  (n+\frac{\epsilon_1}{l}).(z+\frac{\epsilon_2}{l}) \Big].
\end{equation}
Recall that $(a_u)_{u \in Z_{2^\nu p}}$ denote the theta null point
with respect to the theta structure $\Theta_{2^\nu p}$.  We have the
\begin{lemma}\label{lemma1}
  There exists a constant factor $\lambda \in \C$, $\chi \in
  \hat{Z}_{2^\nu p}$ a character of order $2$ and $\delta \in Z_2$,
  such that for all $u\in Z_{2^\nu p}$,
\begin{equation}\label{rel2}
  (a_{u}\otimes_{\Q_q} \C) = \lambda \chi(u)
  \theta_{2^\nu p}\carac 0, {u+\delta} (0,1/(2^\nu p).\Omega),
\end{equation}
where $Z_2$ is considered as a subgroup of $Z_{2^\nu p}$ via the map $j
\mapsto j 2^{\nu-1}p$.
\end{lemma}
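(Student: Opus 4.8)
The plan is to identify the algebraic theta null point $(a_u)_{u \in Z_{2^\nu p}}$ with the classical analytic theta null values attached to the period matrix $\Omega$, by transporting everything through the analytic isomorphism $j_{an}: A_\C \to A_{an}$ and comparing the two descriptions of the theta group action. First I would recall that, by Mumford's algebraic theory \cite{MR34:2621}, the theta structure $\Theta_{2^\nu p, \C}$ on $\pol^{2^\nu p}_\C$ determines a basis of the space of global sections $H^0(A_\C, \pol^{2^\nu p}_\C)$ up to a common scalar, and that the theta null point $(a_u)$ is precisely the evaluation of this basis at the origin. On the analytic side, the Lagrangian decomposition of $K(\pol^{2^\nu p}_\C)$ read off from $\Omega$ singles out the classical basis $\big(\theta_{2^\nu p}\carac 0, {u} (z, \Omega/(2^\nu p))\big)_{u \in Z_{2^\nu p}}$ of the same space (after pullback along $\kappa$ and $j_{an}$); this is the standard dictionary between algebraic and analytic theta structures, as in \cite[Ch.~II]{MR2062673} or \cite{MR34:4269}.

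The key point is that both bases are eigenbases for the natural action of the Heisenberg group $\mathcal{H}(Z_{2^\nu p})$, with the same character attached to each vector by construction of the theta structure. Hence the change-of-basis matrix between them must be, up to a global scalar $\lambda \in \C$, a monomial matrix whose nonzero entries are roots of unity; moreover, since the two theta structures differ only by a symmetric normalization and a choice of level-$2$ data, the monomial part is forced to be a translation $u \mapsto u + \delta$ with $\delta \in Z_2$ together with multiplication by a character $\chi$ of order dividing $2$. This is exactly the content of \cite[Rem.~3]{MR34:4269} on the ambiguity in symmetric theta structures: two symmetric theta structures of the same type for the same line bundle differ by an element of the "symmetric" part of the Heisenberg group, which here is $2$-torsion. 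I would then evaluate at $z = 0$ to pass from sections to the null point and obtain (\ref{rel2}).

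Two technical hypotheses need to be checked to make the translation $\delta$ land in $Z_2$ and the character $\chi$ have order $2$ rather than something larger: first, that the product theta structure $\Theta_{2^\nu p} = \Theta_{2^\nu} \times \Theta_p$ is symmetric, which we already know since $\Theta_{2^\nu}$ is symmetric by hypothesis and $\Theta_p$ is canonical hence symmetric by \cite[Th.~5.1]{ca05a} and \cite[Lem.~3.2]{ckl08}; and second, that the chosen $\Omega$ is compatible with the canonical lift structure on the étale part of $A_{\Z_q}[p]$, which is precisely the normalization imposed in the sentence preceding the lemma and which guarantees that the $p$-part of the analytic theta structure matches the canonical $\Theta_p$ exactly, with no residual $p$-power translation or character. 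The main obstacle is bookkeeping: one must carefully track how the symmetric normalization interacts with the Lagrangian decomposition inherited from $\Theta_{2^\nu p}$, so that the discrepancy is genuinely confined to the level-$2$ subgroup $Z_2 \hookrightarrow Z_{2^\nu p}$ via $j \mapsto j 2^{\nu - 1} p$ as stated, and not spread over the full group. Once the symmetry and the canonical-lift compatibility are invoked, this localization of the ambiguity is automatic, and the lemma follows.
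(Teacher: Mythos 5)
Your proposal follows essentially the same route as the paper: base-change the theta structure to $\C$, compare it with the classical symmetric theta structure attached to the period matrix $\Omega$, and invoke the fact that two symmetric theta structures of the same type inducing the same symplectic isomorphism differ by a $2$-torsion translation, whose effect on the null point is precisely the index shift by $\delta \in Z_2$ together with the order-$2$ character $\chi$. The only loose point is your intermediate claim that the two bases are Heisenberg eigenbases with the same characters and that the classical theta functions already give a basis of $H^0(A_\C,\pol_\C^{2^\nu p})$: in fact they are sections of the translated canonical bundle $\pol_0^{2^\nu p}$, and the paper accordingly tracks two separate order-$2$ contributions, the translation by $\overline{c}$ from \cite[Lem.4.6.2]{MR2062673} and the ambiguity of symmetric theta structures from \cite[Prop.6.9.4]{MR2062673}, which is exactly what your appeal to the symmetric-structure ambiguity and the ``symmetric normalization'' is implicitly supplying.
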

\begin{proof}
  From the theta structure $\Theta_{\Z_q, 2^\nu p}$ of type $Z_{2^\nu
    p}$ we deduce immediately by tensoring with $\C$ a theta structure
  $\Theta_{\C,2^\nu p}$ of type $Z_{2^\nu p}$ for $\pol^{2^\nu p}_\C$.
  Then $(a_{u}\otimes_{\Q_q} \C)_{u \in Z_{2^\nu p}}$ is the theta
  null point defined by the theta structure $\Theta_{\C,2^\nu p}$. As
  $\pol^{2^\nu p}_\C$ is by hypothesis a symmetric line bundle, by
 ~\cite[Lem.4.6.2]{MR2062673}, there exists a $\overline{c} \in
  A_\C[2] \cap K(\pol^{2^\nu p}_\C)$ such that
  $\tau^*_{\overline{c}}(\pol^{2^\nu p}_\C) \simeq \pol^{2^\nu p}_0$,
  where $\pol^{2^\nu p}_0$ is the canonical bundle associate to the
  decomposition provided by the matrix period $\Omega$ (see
 ~\cite[Lem.3.1.1]{MR2062673}).
  
  The line bundle $\pol_0^{2^\nu p}$ comes with a symmetric theta
  structure $\Theta_0$ defined by the decomposition associated to
  $\Omega$ and the element $0 \in K(\pol^{2^\nu p}_0)$ (see
 ~\cite[Lem.6.6.5]{MR2062673}). The theta null point for the theta
  structure $\Theta_0$ is $$(\theta_{2^\nu p}\carac 0, {u} (0,1/(2^\nu
  p).\Omega))_{u \in Z_{2^\nu p}}$$ by~\cite[Prop.6.7.1]{MR2062673}.
  
  As $\overline{c} \in K(\pol^{2^\nu p}_\C)$, we have an isomorphism
  of theta groups $\zeta : \gtheta (\pol_\C^{2^\nu p}) \rightarrow
  \gtheta(\pol_0^{2^\nu p})$ defined by $\zeta ((y, \psi_y)) = (y,
  t^*_y \tau_{\overline{c}}^{*} \circ \psi_y \circ
  \tau_{\overline{c}}^{-1*})$ where $t_{y}$ denotes the
  translation by $y$. Note that this isomorphism induces the
  identity on $K(\pol_\C^{2^\nu p})=K(\pol_0^{2^\nu p})$.
  
  The isomorphism $\Theta_0 \circ \zeta: \gtheta(\pol_\C^{2^\nu p})
  \rightarrow \mathcal{H}(Z_{2^\nu p})$ is a theta structure for
  $\gtheta(\pol_\C^{2^\nu p})$. Denote by $\overline{\Theta}_0$ the
  morphism $K(\pol_\C^{2^\nu p}) \rightarrow Z_{2^\nu p} \times
  \hat{Z}_{2^\nu p}$ deduced from $\Theta_0$.  By definition of
  $\zeta$, the theta null point for the theta structure $\Theta_0
  \circ \zeta$ is deduced from the theta null point for $\Theta_0$ by
  acting upon it with $\overline{\Theta}_0(c)$ (for a definition of this
  action see \cite[pp.297]{MR34:4269}) so that it can be written as
  $(\chi_1(u) \theta\carac 0, {u+\delta_1} (z,1/(2^\nu p).\Omega))_{u
    \in Z_{2^\nu p}}$ where$\overline{\Theta}_0(c)=(\delta_1, \chi_1)
  \in Z_{2^\nu p} \times \hat{Z}_{2^\nu p}$.
 
  As $\Theta_{\C, 2^\nu p}$ and $\Theta_0 \circ \zeta$ are two
  symmetric theta structures of $\pol^{2^\nu p}_\C$ which induce the
  same symplectic isomorphism $\overline{\Theta}_0$, they are defined
  up to a translation by an element $c_0$ in $A_\C[2]$ by
 ~\cite[Prop.6.9.4]{MR2062673}.  Let
  $(\delta_2, \chi_2)=\overline{\Theta}_0(c_0)$, a theta null point for
  $\pol_\C^{2^\nu p}$ with the theta structure $\Theta_{\C, 2^\nu p}$
  is given modulo multiplication by a factor independent of $u$ by
$$(\chi_1(u) \chi_2(u) \theta_{2^\nu p}\carac 0,
{u +\delta_1+\delta_2} (z,1/(2^\nu p).\Omega))_{u \in Z_{2^\nu p}}.$$
We remark that $\chi_1, \chi_2$ and $\chi$ are charaters of order $2$
of $Z_{2^\nu p}$.  We conclude the proof by setting $\delta =
\delta_1+\delta_2$ and $\chi=\chi_1.\chi_2$.
\end{proof}

\begin{lemma}\label{lemma2}
  Let $F$ be the Frobenius morphism acting on $A_{\F_q}$ and let
  $\pi_1, \ldots , \pi_g$ be the Eigenvalues of the $\ell$-adic
  representation $\rho_\ell(F)$ which are units modulo $p$. Let
  $n=\log_p(q)$. For all $\epsilon_1, \epsilon_2 \in Z_2$,
  we have
$$
\frac{{\theta_2 \carac \epsilon_1, {\epsilon_2} (0, 2^\nu.\Omega)}^2}{{\theta_2 \carac
  \epsilon_1, {\epsilon_2} (0, 2^\nu p^n .\Omega)}^2}=\pi_1 \ldots \pi_g.$$
\end{lemma}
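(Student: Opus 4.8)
The claim to prove is Lemma~\ref{lemma2}, which says that the ratio of the squares of a level-$2$ theta constant evaluated at $2^\nu\Omega$ versus at $2^\nu p^n\Omega$ equals the product $\pi_1\cdots\pi_g$ of the unit eigenvalues of Frobenius. I want to emphasize: let me think about what's really going on here before committing to a strategy.

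Let me reconsider. The key geometric input is that $A_{\Z_q}$ is the *canonical lift* of $A_{\F_q}$, so the Serre--Tate canonical coordinates vanish, and crucially the Frobenius $F$ on $A_{\F_q}$ lifts to an honest endomorphism $\tilde F$ of $A_{\Z_q}$ (the canonical lift of Frobenius). Over $\C$, via $\psi$, the abelian variety is $\C^g/\Lambda_\Omega$, and $\tilde F$ becomes an isogeny. Because $A_{\F_q}$ is ordinary, $F$ factors as $\tilde F = \tilde V^{\vee}$-type data: on the étale part $A[p^n]^{\mathrm{et}}$ the map $\Sigma^n$ (the $q$-power Frobenius) acts, and on the connected part it is the multiplication that, combined, realizes $[q]$. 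Concretely, the unit eigenvalues $\pi_1,\dots,\pi_g$ are the eigenvalues of $\tilde F$ acting on the (rank-$g$) part of the Tate module coming from the étale quotient — equivalently, on $H^0$ of the formal group after a suitable identification — while the other $g$ eigenvalues $q/\pi_i$ act on the complementary (toric/connected-dual) part. The period lattice was chosen (as in the paragraph right before \eqref{thetafunction}) so that $\kappa((1/p)\Z^g)$ corresponds to the canonical lift of $A_{\Z_q}[p]^{\mathrm{et}}$; hence on the sublattice $\Z^g \subset \Lambda_\Omega$ the map $\tilde F$ acts through an integer matrix $N$ with $\det N = \pi_1\cdots\pi_g$, and $\tilde F$ sends $\Omega\Z^g$ into itself via the "dual" matrix with determinant $q/(\pi_1\cdots\pi_g) \cdot$(unit), all up to the isogeny being compatible with the complex uniformization.

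**Key steps, in order.** First I would make precise the action of the canonical lift of Frobenius $\tilde F$ on the complex uniformization: there is a $\C$-linear map on $\C^g$, represented (after the normalization of $\Omega$ fixed before \eqref{thetafunction}) by an integral matrix $N$ on the "real" part $\Z^g$ with $\det N = \pm(\pi_1\cdots\pi_g)$, and $\tilde F^n$ correspondingly acts by $N^n$; tracking through $n$ iterations, $\tilde F^n$ is the $q$-power map on the étale $p$-divisible group, so $N^n$ has determinant $\pm(\pi_1\cdots\pi_g)$ — wait, this needs care: it should be that a single application of $\tilde F$ (the $p$-power canonical lift) has the relevant determinant $\prod \pi_i^{1/n}$ only formally, so instead I work directly with $\tilde F_n := \tilde F^n$, the canonical lift of the $q$-power Frobenius, whose action on the étale quotient has determinant $\prod_i \pi_i$. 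Second, I would use the transformation formula for theta functions under $\Omega \mapsto {}^tN^{-1}\Omega N^{-1}$ or the appropriate substitution induced by $\tilde F_n$, together with the fact that, because $A_{\Z_q}$ is the canonical lift and the level structure was chosen compatibly, the theta constant $\theta_2\carac \epsilon_1,{\epsilon_2}(0,2^\nu\Omega)$ and $\theta_2\carac \epsilon_1,{\epsilon_2}(0,2^\nu p^n\Omega)$ are related precisely by pulling back along $\tilde F_n$ — the $p^n$ scaling of $\Omega$ encodes the action on the étale torsion, which is where $\Sigma^n$ acts. Third, the classical theta transformation formula (Igusa; see also the references in the paper) then produces an automorphy factor that is, up to an eighth root of unity and the Jacobian determinant $\det N = \prod_i\pi_i$, exactly the ratio of the two squared theta constants; the squaring kills the eighth-root ambiguity and leaves $\prod_i \pi_i$. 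Fourth, I would invoke that $\prod_i\pi_i \in \Z_q$ (via $\psi$ it lands in the image, and it is Galois-stable hence in $\Z_q$) to make sense of the identity as stated — though strictly this is folded into Theorem~\ref{formula1} rather than the lemma itself.

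**The main obstacle.** The hard part will be step two/three: pinning down *exactly* which modular substitution the canonical lift of Frobenius $\tilde F_n$ induces on $\Omega$, and verifying that under the chosen normalization of $\Omega$ (the one making $\kappa((1/p)\Z^g)$ the canonical lift of the étale $p$-torsion) this substitution is precisely $2^\nu\Omega \rightsquigarrow 2^\nu p^n\Omega$ up to a $\mathrm{Sp}_{2g}(\Z)$-transformation whose "$C$-block" vanishes and whose "$A$-block" has determinant $\prod_i\pi_i$. This requires carefully combining: the ordinary Serre--Tate theory (the canonical lift of Frobenius exists and is compatible with the canonical level structure), the fact that on the connected part $\tilde F_n$ acts as an isomorphism onto a finite-index subgroup dual to the étale action (so that $\tilde F_n \circ \tilde V_n = [q]$), and Mumford's dictionary between theta structures and Lagrangian decompositions so that the "$p^n$" really appears as a scaling and not some more complicated symplectic matrix. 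Once that matrix is identified, the conclusion is a direct application of the theta transformation formula, with the squaring of the theta constants doing the cosmetic work of removing roots of unity; keeping track of whether a single Frobenius or its $n$-th power is the natural object — and hence whether the answer is $\prod\pi_i$ or an $n$-th root thereof — is the subtle bookkeeping point that the normalization of $\Omega$ via $\psi$ is precisely designed to settle.
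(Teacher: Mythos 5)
Your plan shares the general spirit of the paper's argument (a lift of Frobenius plus the classical theta transformation formula), but the concrete mechanism you give for producing the factor $\pi_1\cdots\pi_g$ is flawed, and the step you defer is exactly the content of the lemma. You assert that the lift $\tilde F_n$ of the $q$-power Frobenius preserves the sublattice $\Z^g\subset\Lambda_\Omega$ and acts on it through an \emph{integer} matrix $N$ with $\det N=\pm(\pi_1\cdots\pi_g)$. This cannot be right: an integral matrix has integral determinant, whereas $\pi_1\cdots\pi_g$ is in general an irrational algebraic number (a $p$-adic unit in $\Z_q$, identified with a complex number only through $\psi$). The identification of the $\Z^g$-directions with the \'etale part of the $p$-divisible group is a purely $p$-adic phenomenon coming from the normalization of $\Omega$; it does not give an integral splitting of $\Lambda_\Omega$ preserved by the rational representation of $\tilde F_n$. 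So the automorphy factor cannot be read off as the determinant of an integral block acting on $\Z^g$; it must be computed $p$-adically (via $\psi$) from the action of the Frobenius lift on the \'etale quotient, after one has exhibited an honest element of $\mathrm{Sp}_{2g}(\Z)$ relating the two period matrices. You explicitly leave this identification ("which substitution, and why the factor is $\prod_i\pi_i$") as the main obstacle, so the proposal is a programme rather than a proof of the stated equality.

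The paper's proof also contains structural steps that your sketch omits and that are needed to match the statement. The transformation is not applied to $A$ and $\Lambda_\Omega$: one first passes to the quotient $A'=A/K_1(\pol^{2^\nu p})[2^\nu]$, whose analytic model is $\C^g/\Lambda_{2^\nu\Omega}$ --- this is why the arguments $2^\nu\Omega$ and $2^\nu p^n\Omega$ appear in the lemma --- one checks that $A'$, being a quotient by an \'etale (prime-to-$p$) subgroup, is again a canonical lift with the same $p$-torsion normalization, and then (citing Ritzenthaler) that the lattice inclusion $\Lambda_{p^n2^\nu\Omega}\subset\Lambda_{2^\nu\Omega}$ induces an isogeny lifting the $q$-Frobenius of $A'_{\F_q}$, the two period matrices lying in one $\Gamma_g(p)$-class, with squared automorphy factor equal to the product of the unit eigenvalues \emph{of $A'_{\F_q}$}. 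Finally, the hypothesis that $\Theta_{2^\nu}$ is defined over $\Z_q$ makes $K_1(\pol^{2^\nu p})[2^\nu]$ rational over $\F_q$, so $A_{\F_q}$ and $A'_{\F_q}$ are $\F_q$-isogenous and Tate's theorem identifies their Frobenius characteristic polynomials, transferring $\pi_1\cdots\pi_g$ back to $A$. Your proposal never uses the rationality hypothesis on $\Theta_{2^\nu}$ and never addresses why the eigenvalues of whatever auxiliary torus you compare with are those of $A_{\F_q}$; even with a corrected computation of the automorphy factor, that transfer step would still be missing.
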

\begin{proof}
  Let $A'_{\Z_q}$ be the quotient of $A_{\Z_q}$ by $K_1(\pol^{2^\nu
    p}_{\Z_q})[2^\nu]$ the maximal $2^\nu$-torsion subgroup of $K_1$.
  As $K_1(\pol^{2^\nu p}_{\Z_q})[2^\nu]$ is an isotropic subgroup of
  $K(\pol^{2^\nu p}_{\Z_q})$ for the commutator pairing, the line
  bundle $\pol^{2^\nu p}_{\Z_q}$ descends to a line bundle
  $\pol'^{p}_{\Z_q}$ on $A'_{\Z_q}$ which comes with a Lagrangian
  decomposition $K(\pol'^p_{\Z_q}) = K_1(\pol'^p_{\Z_q}) \times
  K_2(\pol'^p_{\Z_q})$ and a theta structure $\Theta'_p$ of type $Z_p$
  inherited from $\Theta_{2^\nu p}$ by~\cite[Prop.2]{MR34:4269}.

  We remark that $A'_{\Z_q}$ being the quotient of $A_{\Z_q}$ by an
  \'etale subgroup is a canonical lift of its special fiber
  $A'_{\F_q}$.  As before, we can consider $A'_\C = A'_{\Q_p}
  \otimes_{\psi} \C$ and we have an isomorphism of analytic varieties
  $j' : A'_\C \rightarrow A'_{an} = \C^g / \Lambda_{ 2^\nu \Omega}$.
  Let $\kappa' : \C^g \rightarrow A'_\C$ be the canonical projection.
  By the choice we have made on $\Omega$, the $p-$torsion points of
  $A'_{an}$ given by $\kappa'(1/p.\Z^g)$ correspond via $j'^{-1}$ to
  a canonical lift of the maximal \'etale quotient of $A'_{\Z_q}[p]$.

  We can then consider the analytic variety ${A'}_{an}^{n} = \C^g /
  \Lambda_{p^n 2^\nu \Omega}$. The inclusion of lattices $\Lambda_{p^n
    2^\nu \Omega} \subset \Lambda_{2^\nu \Omega}$ gives an isogeny
  $\iota: {A'}_{an}^n \rightarrow A'_{an}$. Using exactly the same
  proof as in~\cite[pp.78]{Ritzenthaler03}, one obtains that $\iota$ is a
  lift of the Frobenius morphism acting on $A'_k$, that
  ${A'}_{an}^n$ and $A'_{an}$ are two representatives of the same
  class element of $\Si_g / \Gamma_g(p)$. Moreover, for all $\epsilon_1,
  \epsilon_2 \in Z_2$ we have
$$\theta_2 \carac \epsilon_1, {\epsilon_2} (0, 2^\nu.\Omega)^2= (\pi_1\ldots
\pi_g) \theta_2 \carac
  \epsilon_1, {\epsilon_2} (0, 2^\nu p^n .\Omega)^2,$$
where $\pi_1, \ldots, \pi_g$ are the $g$ Eigenvalues of the $\ell$-adic
representation of the Frobenius morphism acting on $A'_{\F_q}$ which are
units modulo $p$.

The hypothesis that $\Theta_{2^\nu}$ is defined over $\Z_q$ implies
that $K_1(\pol_{\F_q}^{2^\nu p})[2^\nu]$ is defined over $\F_q$. As a
consequence, the two abelian varieties $A_{\F_q}$ and $A'_{\F_q}$ are
$\F_q$-isogeneous and, using a theorem of Tate~\cite{MR34:5829}, we
deduce immediately that they have the same characteristic polynomial
of the Frobenius morphism.
\end{proof}

\begin{lemma}\label{lemma3}
  Let $\gamma : Z_{2^\nu} \rightarrow Z_{2^\nu p}$, $j \mapsto pj$.
  For each $\chi \in \hat{Z}_{2^\nu p}$ character of order $2$, there
  exists $\epsilon \in Z_2$ such that
$$\sum_{u\in Z_{2^\nu}} \chi(\gamma(u)) \theta_{2^\nu p} \carac 0, {\gamma(u)}
(0, 1/(2^\nu p).\Omega) =
\theta_2 \carac \epsilon,0(0, (2^\nu /p). \Omega).$$
We have also:
$$\sum_{u\in Z_{2^\nu p}} \chi(u) \theta_{2^\nu p} \carac 0, {u} (0,
1/(2^\nu p).\Omega) =
\theta_2 \carac \epsilon,0(0, 2^\nu p.\Omega).$$
\end{lemma}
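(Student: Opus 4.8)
\noindent The plan is to prove both identities by a direct Fourier computation from the series $(\ref{thetafunction})$; each is a $\chi$-twisted instance of the classical isogeny formula for theta series. The one structural preliminary I would record first is a normal form for $\chi$: since $p$ is odd, every $\chi\in\hat{Z}_{2^\nu p}$ of order $2$ has the shape $\chi(x)=(-1)^{{}^t\epsilon x}$, where $x$ is read modulo $2$ and $\epsilon\in Z_2$ is uniquely determined by $\chi$; and for this same $\epsilon$ one has $\chi(\gamma(u))=(-1)^{{}^t\epsilon(pu)}=(-1)^{{}^t\epsilon u}$ for $u\in Z_{2^\nu}$, again because $p$ is odd. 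This is precisely why a single $\epsilon$ can serve both displayed equations.

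For the first identity I would expand, via $(\ref{thetafunction})$ with $l=2^\nu p$, $\epsilon_1=0$, $z=0$ and $\epsilon_2$ a lift of $\gamma(u)=pu$ (so $\epsilon_2/l=u/2^\nu$),
\[
\theta_{2^\nu p}\carac 0, {\gamma(u)}(0,\tfrac{1}{2^\nu p}\Omega)
=\sum_{n\in\Z^g}\exp\Big[\tfrac{\pi i}{2^\nu p}\,{}^t n\,\Omega\,n+\tfrac{2\pi i}{2^\nu}\,{}^t n\,u\Big].
\]
Multiplying by $(-1)^{{}^t\epsilon u}$, summing over $u\in Z_{2^\nu}$, and interchanging the finite $u$-sum with the $n$-sum (legitimate since $\mathrm{Im}\,\Omega$ is positive definite, so the $n$-series is absolutely convergent), the inner sum $\sum_{u\in Z_{2^\nu}}\exp[\tfrac{2\pi i}{2^\nu}\,{}^t(n+2^{\nu-1}\epsilon)u]$ factors over the $g$ coordinates into complete geometric sums over $\Z/2^\nu\Z$: it equals $2^{\nu g}$ when $n\equiv 2^{\nu-1}\epsilon\pmod{2^\nu}$ componentwise and $0$ otherwise. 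Writing the surviving indices as $n=2^\nu m+2^{\nu-1}\epsilon$ with $m\in\Z^g$ and using $\tfrac{1}{2^\nu p}\,{}^t n\,\Omega\,n=\tfrac{2^\nu}{p}\,{}^t(m+\tfrac{\epsilon}{2})\,\Omega\,(m+\tfrac{\epsilon}{2})$, the remaining series is recognized as $2^{\nu g}\,\theta_2\carac \epsilon, 0(0,\tfrac{2^\nu}{p}\Omega)$; this is the asserted identity up to the nonzero constant $2^{\nu g}$.

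The second identity is the same computation with $Z_{2^\nu p}$ in place of $Z_{2^\nu}$ and with $u$ itself (not $\gamma(u)$) in the lower characteristic: the $n$-th term of $\theta_{2^\nu p}\carac 0, u(0,\tfrac{1}{2^\nu p}\Omega)$ is $\exp[\tfrac{\pi i}{2^\nu p}\,{}^t n\,\Omega\,n+\tfrac{2\pi i}{2^\nu p}\,{}^t n\,u]$, the character sum over $u\in Z_{2^\nu p}$ now forces $n\equiv 2^{\nu-1}p\,\epsilon\pmod{2^\nu p}$ with weight $(2^\nu p)^g$, and with $n=2^\nu p\,m+2^{\nu-1}p\,\epsilon$ and $\tfrac{1}{2^\nu p}\,{}^t n\,\Omega\,n=2^\nu p\,{}^t(m+\tfrac{\epsilon}{2})\,\Omega\,(m+\tfrac{\epsilon}{2})$ one arrives at $(2^\nu p)^g\,\theta_2\carac \epsilon, 0(0,2^\nu p\,\Omega)$.

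I do not expect a real obstacle here: the argument is bookkeeping around an elementary character (Gauss) sum. The points that need care are (i) extracting $\epsilon$ from $\chi$ and checking it is common to the two equations, which is where $p$ odd is used; (ii) justifying the interchange of summation; and (iii) the multiplicative constants $2^{\nu g}$ and $(2^\nu p)^g$ that the computation produces. Those constants are harmless for the intended application: they can be absorbed into the already unspecified scalar $\lambda$ of Lemma $\ref{lemma1}$ and cancel in the ratio of theta null values of Theorem $\ref{formula1}$ (where moreover the $\theta_2$-values enter squared), so one only needs to fix a convention and carry it through consistently. Finally, note that both $z$-arguments are $0$, so the linear term of $(\ref{thetafunction})$ vanishes and no translate of the target $\theta_2$ appears; for a nonzero $z$ the same computation would simply produce $\theta_2\carac \epsilon, 0$ evaluated at a rescaled point.
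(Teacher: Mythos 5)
Your computation is, in substance, the paper's own proof: the paper establishes the lemma by quoting the transformation formula (\ref{rel}) from Mumford (Tata I, p.~124), namely $f_a=\sum_b \exp(-2\pi i\,ab/l)\,g_b$ with $f_a=\theta_1\carac {a/l},0(lz,l\Omega_0)$ and $g_b=\theta_1\carac 0,{b/l}(z,l^{-1}\Omega_0)$, specialized once with $l=2^\nu$ and once with $l=2^\nu p$, together with the same observation you make that an order-$2$ character of $\hat{Z}_{2^\nu p}$ is $u\mapsto\exp(-\pi i\,{}^t\epsilon u)$ and that $p$ odd lets a single $\epsilon$ serve both displayed identities. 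Your expansion of (\ref{thetafunction}) followed by the finite character sum is exactly the proof of that quoted formula, so the route is the same; you have merely reproved the citation in the two needed instances.

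One point in your closing remarks is wrong and should be fixed, because you lean on it to pass from what you prove to what the lemma asserts. You correctly find that, with the normalization (\ref{thetafunction}), the two sums equal $2^{\nu g}\,\theta_2\carac \epsilon,0(0,(2^\nu/p)\Omega)$ and $(2^\nu p)^g\,\theta_2\carac \epsilon,0(0,2^\nu p\,\Omega)$ respectively (the paper's printed (\ref{rel}) likewise suppresses a factor $l^g$ relative to its own definition, so the sloppiness is in the source as well). But these two constants are \emph{different}, so they do not cancel in the quotient appearing in Theorem \ref{formula1}: the ratio of the two sums is $p^{-g}$ times the ratio of the $\theta_2$-values, and $\Norm_{\Q_q/\Q_p}(p^{-g})=q^{-g}$ does not go away. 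Nor can the discrepancy be absorbed into the $\lambda$ of Lemma \ref{lemma1}, since that $\lambda$ is common to numerator and denominator and already cancels, and the squaring in Lemma \ref{lemma2} concerns the $\theta_2$-values themselves, not these prefactors. So if you insist on exact (non-projective) equalities you must either state the lemma with the explicit constants $2^{\nu g}$ and $(2^\nu p)^g$ and carry the resulting factor $p^{-g}$ through the norm computation, or fix a normalization in which it disappears; asserting that it cancels is not a valid way to close the gap.
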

\begin{proof}
For $l \in \N^*$, $a,b \in Z_l$ and $\Omega_0 \in \Si_g$, we put:
\begin{eqnarray*}
f_a & = & \theta_1\carac a/l,0 (lz, l.\Omega_0) \\
g_b & = & \theta_1\carac 0,{b/l} (z,l^{-1}.\Omega_0) \\
\end{eqnarray*}

Then we have the following formula (see~\cite[pp.124]{MR85h:14026}):
\begin{equation}\label{rel}
f_a = \sum_{ a \in Z_l} \exp(-2\pi i \frac{ab}{l}) g_b.
\end{equation}
Let $\chi \in \hat{Z}_{2^\nu p}$ be a character of order $2$ and let
$\epsilon \in Z_2$ be such that for all $u \in Z_{2^\nu p}$, we have
$\chi(u)=\exp(-\pi i \epsilon u)$. The lemma is obtained by applying
formula (\ref{rel}) with $l=2^\nu$, $\Omega_0=p\Omega$ and then with
$l=2^\nu p$ and $\Omega_0=\Omega$.
\end{proof}

We are ready to prove Proposition \ref{formula1}. Let $\gamma' : Z_{2}
\rightarrow Z_{2^\nu p}$, $j \mapsto 2^{\nu -1}pj$.
By applying
successively Lemma \ref{lemma1} and Lemma \ref{lemma3}, we obtain that
for an element $\delta \in Z_2$ and $\chi \in Z_{2^\nu p}$ a character
of order $2$ we have
\begin{eqnarray*}
  \psi\left(\frac{\sum_{u \in Z_{2^\nu}}a_u}{\sum_{u \in Z_{2^\nu p}}a_u}
  \right) & = & \frac{ \sum_{u \in Z_{2^\nu}} \chi(u) \theta_{2^\nu p} \carac 0, {u
      + \gamma'(\delta)} (0, 1/(2^\nu p) .\Omega)}{\sum_{u \in
      Z_{2^\nu p}} \chi(u)
    \theta_{2^\nu p} \carac 0, {u + \gamma'(\delta)}(0, 1/(2^\nu p) .\Omega) } \\
  & = & \frac{ \sum_{u \in Z_{2^\nu}} \chi'(u) \theta_{2^\nu p} \carac 0, {u} (0,
    1/(2^\nu p) .\Omega)}{\sum_{u \in Z_{2^\nu p}} \chi'(u) \theta_{2^\nu p}
    \carac 0, {u}(0, 1/(2^\nu p) .\Omega) } \\
  & = & \frac{ \theta_2 \carac \epsilon, 0 (0, (2^\nu /p). \Omega)}{\theta_2 \carac \epsilon, 0
    (0, (2^\nu p) .\Omega)},
\end{eqnarray*}
where $\chi'(u)=u+\gamma'(\delta)$ and $\epsilon$ is chosen such that
for all $u \in Z_{2^\nu p}$ we have $\chi'(u)=\exp(-\pi i \epsilon u)$.
The second equality is due to the fact that $\Delta \in A_\C \cap K_1(\pol_\C)$.

On the other side we have
\begin{eqnarray*}
  \Norm_{\Q_q / \Q_p}\left(\psi^{-1} \left( \frac{ \theta_2 \carac \epsilon,
        0 (0, (2^\nu /p). \Omega)}{\theta_2 \carac \epsilon, 0
        (0,2^\nu p .\Omega)} \right) \right) & = & \psi^{-1} \left(
    \frac{ \theta_2 \carac \epsilon, 0 (0, (2^\nu /p^n). \Omega)}{\theta_2 \carac \epsilon, 0
      (0, 2^\nu p^n. \Omega)} \right) \\
  & = & \psi^{-1} \left( \frac{ \theta_2 \carac \epsilon, 0 (0, 2^\nu \Omega)}{\theta_2 \carac \epsilon, 0
      (0, 2^\nu  p^n .\Omega)} \right)^2 \\
  & = & \pi_1 \ldots \pi_g,
\end{eqnarray*}
by Lemma \ref{lemma2}.

\section{Description of the algorithm}
\label{algo}

In this section we explain how to use the formulas given in Section
\ref{thetaidentities} in order to count points on the Jacobian of a
generic ordinary hyperelliptic curve over a finite field of odd
characteristic.  Assume that we have chosen a prime $p>2$ and an
integer $g \geq 1$.

\begin{theorem}\label{main}
  Let $C$ be an hyperelliptic curve of genus $g$ with all Weierstrass
  points rational over a finite field $\F_q$ of characteristic $p$
  such that the Jacobian $J(C)$ is ordinary and absolutely simple. Let
  $\nu$ be an integer greater or equal $3$, we suppose that the
  $2^\nu$-torsion of $J(C)$ is defined over $\F_q$. The algorithm for
  the computation of the number of $\F_q$-rational points $\# C(\F_q)$
  of the curve $C$, that we give in the following, has asymptotic time
  complexity $O(n^{2+o(1)})$ and asymptotic space complexity $O(n^2)$
  where $n = \log(\#\F_q)$.
\end{theorem}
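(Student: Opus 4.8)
The plan is to verify correctness and then account for the complexity, following the lift-and-norm structure announced in the introduction.

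\emph{The lifting step.} Fix an integer $m = O(n)$, to be pinned down below, and carry out all $p$-adic arithmetic modulo $p^m$. Since every Weierstrass point of $C$ is $\F_q$-rational, Thomae's formulas express the $2^\nu$-theta null point of $J(C)$ over $\F_q$ in terms of the Weierstrass coordinates. By Theorem \ref{multiplicity} — applicable because $\nu \geq 3 \geq 2$ — the ideal $J$ cut out over $\F_q$ by Riemann's equations of Theorem \ref{riemannrelations} is $0$-dimensional, so a Gröbner-basis computation together with root-finding of the resulting univariate polynomial recovers the full $2^\nu p$-theta null point $(\bar a_u)_{u \in Z_{2^\nu p}}$ of $J(C)$ modulo $p$. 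One then lifts $(\bar a_u)$ to the theta null point $(a_u)$ of the canonical lift $A/\Z_q$ by a multivariate Newton iteration applied to the combined system formed by Riemann's equations (Theorem \ref{riemannrelations}) and the degree-$p^2$ relations of Theorem \ref{thetarelations} describing the action of $\sigma^2$: the former define the local deformation space of the polarized abelian variety together with its $(2^\nu p)$-theta structure, while imposing the latter isolates, among ordinary deformations, the theta null point of the canonical lift. Provided this point is a \emph{simple} zero of the truncated system (see the last paragraph), Newton's method is well defined and doubles the exact precision at each step.

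\emph{The norm step and recovery of $\chi_F$.} The hypothesis that the $2^\nu$-torsion of $J(C)$ is $\F_q$-rational guarantees that $\Theta_{2^\nu}$ is defined over $\Z_q$, so Theorem \ref{formula1} applies and yields
\[
\pi_1 \cdots \pi_g \equiv \Norm_{\Q_q/\Q_p}\!\left( \frac{\sum_{u \in Z_{2^\nu}} a_u}{\sum_{u \in Z_{2^\nu p}} a_u} \right) \pmod{p^m}.
\]
Running this computation for the $O_g(1)$ abelian varieties $2^\nu p$-isogenous to $J(C)$ — equivalently, extracting the missing data from the theta transformation attached to the Frobenius isogeny that is already computed during the lift — produces all the elementary symmetric functions of $\pi_1, \ldots, \pi_g$, i.e. the slope-$0$ factor $U(T) = \prod_{i=1}^g (T - \pi_i) \in \Z_p[T]$ modulo $p^m$. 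Its constant term $U(0) = (-1)^g \pi_1 \cdots \pi_g$ is a $p$-adic unit, and $\chi_F(T) = U(T) \cdot T^g U(q/T)/U(0)$ by the functional equation, so $\chi_F \bmod p^m$ is determined; since $\chi_F \in \Z[T]$ is monic of degree $2g$ with coefficients bounded in absolute value by $\binom{2g}{g} q^g$ (Weil bounds), choosing $m$ with $p^m > 2\binom{2g}{g} q^g$, i.e. $m = O(n)$ for fixed $g$ and $p$, recovers $\chi_F$ exactly. Finally $\#C(\F_q) = q + 1 - a_1$, where $-a_1$ is the coefficient of $T^{2g-1}$ in $\chi_F$.

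\emph{Complexity.} With $g$ and $p$ fixed, an element of $\Z_q/p^m\Z_q$ occupies $O(nm) = O(n^2)$ bits, and all subroutines can be arranged to keep only $O(1)$ such elements in memory, giving space $O(n^2)$; here we use that the number of theta coordinates $\abs{Z_{2^\nu p}} = (2^\nu p)^g$ is a constant. For the running time, by the sparse-modulus hypothesis of the preliminaries a multiplication in $\Z_q/p^m\Z_q$ costs $O((nm)^{1+o(1)}) = O(n^{2+o(1)})$ bit operations with FFT arithmetic, and inversion, the Frobenius substitution $\sigma$, and the norm $\Norm_{\Q_q/\Q_p}$ (via the standard fast norm algorithm) obey the same bound. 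The mod-$p$ phase is a fixed Gröbner-basis computation plus root-finding of a fixed-degree polynomial over $\F_q$, within $O(n^{2+o(1)})$ bit operations. The Newton lift consists of $O(\log m) = O(\log n)$ iterations; the $k$-th works modulo $p^{2^k}$ and performs $O(1)$ ring operations plus the solution of a fixed-size linear system, hence costs $O((n 2^k)^{1+o(1)})$, and the geometric sum is dominated by the last term $O((nm)^{1+o(1)}) = O(n^{2+o(1)})$. Adding the three phases gives total time $O(n^{2+o(1)})$.

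\emph{The main obstacle.} The technical heart is the hypothesis invoked in the lifting step: that the theta null point of the canonical lift is an isolated \emph{nonsingular} zero of the combined Riemann/$\sigma^2$ system, so that the multivariate Newton iteration is well defined and converges quadratically. Theorem \ref{multiplicity} and the finiteness result of Section \ref{secfinit} give $0$-dimensionality of the deformation-space part, but one must still show that the $\sigma^2$-correspondence of Theorem \ref{thetarelations} meets the deformation space transversally at the canonical lift — equivalently, that the Jacobian of the full system is invertible modulo $p$ there. It is precisely for this transversality that the hypotheses $\nu \geq 3$ (ensuring, since $8 \mid 2^\nu$, that the deformation equations form a complete system in the spirit of the classical level-divisible-by-$8$ theory) and the absolute simplicity of $J(C)$ are brought to bear, the latter ruling out the degenerate loci of the moduli space where such transversality can fail.
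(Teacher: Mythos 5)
Your lift and norm phases follow the paper's route, but the reconstruction of $\chi_F$ is where your argument has a genuine gap. Theorem \ref{formula1} produces a single quantity, $\lambda=\pi_1\cdots\pi_g$, and your claim that running the norm computation for the $O_g(1)$ abelian varieties $2^\nu p$-isogenous to $J(C)$ yields the remaining elementary symmetric functions of $\pi_1,\ldots,\pi_g$ is unjustified and, as stated, false: isogenous abelian varieties have the same characteristic polynomial of Frobenius (Tate's theorem \cite{MR34:5829}, used in the paper itself in the proof of Lemma \ref{lemma2}), so each of those norms reproduces the same product $\pi_1\cdots\pi_g$ and gives no new symmetric function. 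Consequently you never obtain the unit factor $U(T)=\prod_i(T-\pi_i)$, and the functional-equation step and your precision bound $p^m>2\binom{2g}{g}q^g$ (which presupposes that all coefficients of $U$ are known modulo $p^m$) have nothing to apply to. The paper's proof bridges exactly this gap differently: from the single value $\lambda$ it forms $\eta=\lambda+q^g/\lambda$ to precision $m=O(n)$ (with a constant depending exponentially on $g$), recovers by LLL lattice reduction the integer polynomial $P_{sym}$ of degree $2^{g-1}$ whose roots are $x+q^g/x$ with $x$ ranging over products of $g$ terms taken from the pairs $\{\pi_i,q/\pi_i\}$, deduces the set $\{\pi_i^2\}$ and hence $\chi_F(\pm X)$ (irreducibility being guaranteed by absolute simplicity), and removes the sign ambiguity by multiplying a point of $J(C)$ by the candidate group orders; for $g=1,2$ simpler direct formulas are used. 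Some device of this kind is indispensable for $g\geq 3$, so your proof of correctness is incomplete without it.

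A second, smaller issue: the nonsingularity of the canonical theta null point as a zero of the combined Riemann/$\sigma^2$ system, which you explicitly leave as a hypothesis in your final paragraph, is the condition the paper discharges by selecting a suitable subset of $1/2[(2^\nu p)^g-2^{\nu g}]+2^{\nu g}-1$ equations and invoking the lifting theorem of \cite[Th.2]{LL07}; an appeal to that result (or an equivalent verification that the relevant Jacobian matrix is invertible modulo $p$) is needed, since acknowledging the obstacle does not remove it. Your complexity accounting (fixed number of theta coordinates, $O(\log n)$ Newton steps with doubling precision, quasi-linear arithmetic in $\Z_q/p^m\Z_q$, norm and LLL-free reconstruction) is otherwise consistent with the paper's Section \ref{complexity}.
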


From Theorem \ref{main}, we deduce
\begin{corollary}
  Let $C$ be an hyperelliptic curve of genus $g$ over a finite field
  $\F_q$ of characteristic $p$ such that the Jacobian $J(C)$ is
  ordinary and absolutely simple. There exists an algorithm to compute
  the number of $\F_q$-rational points $\# C(\F_q)$ of the curve $C$
  which has asymptotic time complexity $O(n^{2+o(1)})$ and asymptotic
  space complexity $O(n^2)$ where $n = \log(\#\F_q)$.
\end{corollary}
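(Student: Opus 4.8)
The plan is to reduce the statement to Theorem~\ref{main} by a base change to a finite extension of bounded degree, followed by a descent on the characteristic polynomial of Frobenius: the corollary differs from Theorem~\ref{main} only in that the hypotheses ``all Weierstrass points rational'' and ``$2^\nu$-torsion defined over $\F_q$'' are dropped, so everything is in making these true over a controlled extension. Fix $\nu=3$. First I would produce an integer $d=d(g)\geq 1$ depending only on the genus $g$ (hence a constant, as $g$ is fixed) such that over $\F_{q^d}$ the curve $C$ has all Weierstrass points rational and the $2^\nu$-torsion of $J(C)$ is $\F_{q^d}$-rational. Writing $C$ as $y^2=f(x)$ with $\deg f\leq 2g+2$ (possible since $p>2$), the Weierstrass points are defined over an extension of degree dividing $\mathrm{lcm}(1,\dots,2g+2)$; and since $p>2$ the group scheme $J(C)[2^\nu]$ is \'etale of order $2^{2g\nu}$, so Galois acts on it through $\mathrm{GL}_{2g}(\Z/2^\nu\Z)$ and its field of rationality has degree dividing $\#\mathrm{GL}_{2g}(\Z/2^\nu\Z)$. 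Taking $d$ to be the least common multiple of these two bounds works. Ordinariness of $J(C)$ is preserved under base field extension ($p$-rank is insensitive to it), absolute simplicity is a property of $J(C)\times_{\F_q}\overline{\F}_q$ and hence also preserved, and $C_{\F_{q^d}}$ remains hyperelliptic of genus $g$; thus $C_{\F_{q^d}}$ satisfies all hypotheses of Theorem~\ref{main}.

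Next I would run the algorithm of Theorem~\ref{main} on $C_{\F_{q^d}}$. Since $d$ is a constant, $n':=\log(\#\F_{q^d})=d\,n=O(n)$, so the run has time complexity $O(n^{2+o(1)})$ and space complexity $O(n^2)$. Internally, via the trace formula of Theorem~\ref{formula1} and the norm computation, this yields the characteristic polynomial $\chi_{F^d}\in\Z[T]$ of the $q^d$-power Frobenius endomorphism of $J(C)$; alternatively, running the algorithm over $\F_{q^d},\F_{q^{2d}},\dots,\F_{q^{2gd}}$ --- a constant number of runs, each of the same complexity --- recovers the power sums of the eigenvalues of $F^d$ and hence $\chi_{F^d}$. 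Write $\pi_1,\dots,\pi_{2g}$ for the eigenvalues of the $q$-power Frobenius $F$, so that $\chi_{F^d}(T)=\prod_j(T-\pi_j^d)$ and $\lvert\pi_j\rvert=\sqrt q$ for all $j$ by the Riemann hypothesis for $J(C)$.

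It remains to recover $\chi_F(T)=\prod_j(T-\pi_j)$ from $\chi_{F^d}(T)$. Since $t\mapsto t^d$ is at most $d$-to-one, each root of $\chi_F$ is one of the $d$ complex $d$-th roots of a root of $\chi_{F^d}$; imposing in addition that the candidate be a monic integer polynomial of degree $2g$ satisfying the functional equation $T^{2g}\chi(q/T)=q^g\chi(T)$ and the Weil bounds $\lvert a_i\rvert\leq\binom{2g}{i}q^{i/2}$ on its coefficients, one is left with at most $d^{2g}$ candidates, a number depending only on $g$. One then isolates the true $\chi_F$ among this constant-size list: each candidate predicts a value for $\#J(C)(\F_{q^i})$, $i=1,\dots,2g$, and one tests these predicted group orders by drawing a few random points of $J(C)(\F_{q^i})$ and checking whether the predicted order annihilates them, rejecting every incorrect candidate with constant probability per trial; since there are only constantly many candidates and each group operation in $J(C)(\F_{q^i})$ costs $O(n^{1+o(1)})$ for fixed $g$, this adds only $O(n^{1+o(1)})$ and leaves the overall complexity at $O(n^{2+o(1)})$ time and $O(n^2)$ space. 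From $\chi_F$ one reads off $\#C(\F_q)=q+1-\sum_j\pi_j$. The main point requiring care --- and the step I would expect to be the real obstacle --- is precisely this disambiguation: one must argue rigorously that a constant number of such tests separates all $d^{2g}$ candidates within the stated budget, i.e.\ that distinct candidates differ in one of the finitely many quantities being probed, which needs a short case analysis (or, if one prefers, a slightly larger but still constant number of auxiliary point-counting runs over $\F_{q^{id}}$ to determine $\chi_{F^d}$-compatible power sums outright).
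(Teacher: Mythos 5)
Your proposal is correct and follows essentially the same route as the paper: pass to a constant-degree extension $\F_{q^d}$ (bounded in terms of $g$ only) where the hypotheses of Theorem~\ref{main} hold, run that algorithm there, recover the eigenvalues of $F$ as $d$-th roots of those of $F^d$, and eliminate the resulting constant-size list of candidate polynomials by multiplying points of the Jacobian by the predicted group orders. The paper's own proof is exactly this argument (phrased with $r$ in place of $d$ and invoking Stichtenoth for $\alpha_i^r=\alpha_i'$), and it is no more detailed than yours on the final disambiguation step.
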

\begin{proof}
  Let $\nu$ be an integer greater or equal $3$. Let $\F_{q^r}$ be an
  extension of $\F_q$ and consider $C_{\F_{q^r}}$ the curve obtained
  from $C$ by doing a base field extension from $\F_q$ to $\F_{q^r}$.
  We suppose that $r$ is chosen such that the $2^{\nu}$-torsion points
  of $J(C_{\F_{q^r}})$ are defined over $\F_{q^r}$. Using a rational
  expression of the group law on $J(C)$, we see that there exists a
  bound on $r$ which is independent of the choice of $C$ when $g$ is
  fixed.

  Applying Theorem \ref{main} we obtain in time $O(n^{2+o(1)})$ the
  characteristic polynomial $\chi_{F'}$ of the $q^r$-Frobenius
  morphism $F'$. Let $\alpha'_1, \ldots , \alpha'_{2g}$ be the roots
  of $\chi_{F'}$. On the other side, let $\alpha_1, \ldots ,
  \alpha_{2g}$ be the roots of $\chi_F$ the characteristic polynomial
  of the $q$-Frobenius acting on $C$. We have by~\cite{MR1251961}
  Theorem V.1.15, ${\alpha'}_i^r=\alpha_i$.  By computing the roots
  $\alpha'_1, \ldots , \alpha'_{2g}$ and taking their $r^{th}$ root,
  we obtain a finite set of possible roots for $\chi_F$ up to
  permutation of the indices. In order to finish the proof, we just
  have to remark that all the above computations for a fixed genus
  have constant complexity with respect to $\log(q)$. Moreover, it is
  possible to check the result of the computations in quasi-quadratic
  time by taking a point $P$ of $J(C)$ and computing $\lambda.P$ where
  $\lambda$ is the supposed group order of $J(C)$.
\end{proof}

We remark that the existence of such a quasi-quadratic time algorithm
in the special case $p=2$ is proved in~\cite{LL07}.  In the following
we give an algorithm which is expected to have the desired properties.
In the case that we take $\nu=1$ in the statement of Theorem
\ref{main}, we have verified that the correctness of the algorithm
still holds by counting points on many examples of elliptic curves in
characteristic $3$ and $5$ and on some genus $2$ curves in
characteristic $3$.  Our algorithm follows the so-called lift and norm
paradigm which was introduced by Satoh in ~\cite{MR2001j:11049}. The
algorithm is as follows.

We assume that the hyperelliptic curve $C$ is given by an equation
of the form
\[
y^2 = \prod_{i=1}^{2g+2}(x-\overline{\alpha}_i)
\]
where $\overline{\alpha_i} \in \F_q$.

\paragraph{Initialization phase:}
Let $J(C)$ be the Jacobian of $C$. The aim of this first phase is to
compute the theta null point associated to a semi-canonical product
theta structure $\Theta_{2^\nu p}=\Theta_{2^\nu} \times \Theta_p$ for
$\pol^{2^\nu p}$ (compare Section \ref{sigmasquare})
where $\pol$ is a degree $1$ symmetric ample line
bundle on $J(C)$.

This can be done in the following way. First compute the theta null
point associated to a theta structure $\Theta_2$ of type $Z_2$ for
$\pol^2$. By considering any lift $\mathcal{C}$ of $C$ over $W(\F_q)$
defined by lifts $\alpha_i$ of $\overline{\alpha_i}$ over $\Z_q$
and a given embedding $\psi: \Z_q
\rightarrow \C$ one can view the Jacobian $J(\mathcal{C})$ of the
lifted curve $\mathcal{C}$ as a
complex abelian variety. One can consider a symplectic basis of
$H_1(\mathcal{C}, \Z)$ given by $A$-cycles and $B$-cycles as described
in~\cite{MR86b:14017}.  The associated period matrix $\Omega$ of
$J(\mathcal{C})$ is an element of $\Si_g$, the $g$-dimensional Siegel
upper half plane. For $\epsilon_1, \epsilon_2 \in \N$ and $l \in \N^*$, we
denote by $\theta_l \carac \epsilon_1,{\epsilon_2} (z,\Omega)$ the Riemann theta
function with rational characteristic given by (\ref{thetafunction}).

According to~\cite[pp.124]{MR85h:14026} a theta null point associated
to a well chosen theta structure of the second power of the degree $1$
canonical line bundle defined by $\Omega$ is given by $(a_u)_{u\in
  Z_2}$ with
$$a_u = \lambda \theta_2 \carac 0,{u} (0, 1/2 \Omega),$$ 
where $\lambda \in \C^*$. This theta null point, which correspond to
the case $\nu=1$, can be computed in two steps.

\paragraph{Step 1.}  
For $i=1\ldots g$, let $\tau_i$ be the vector $(\tau_{i,j})_{j \in
  \{1, \ldots , g\}}$ such that $\tau_{i,j}=0$ if $j<i$ and
$\tau_{i,j}=1$ if $i \geq i$. Using the Thomae-Fay
formulas~\cite[pp.121]{MR86b:14017}, we compute
$$
\theta_1 \carac v,u (0,\Omega)^2 = \pm \sqrt{\prod_{0\leqslant i < j \leqslant g
  } (\alpha_{2i+e_i}-\alpha_{2j+e_j})
  (\alpha_{2i+1-e_i}-\alpha_{2j+1-e_j})},
$$
where $e_0 = 0$ and the vector $(e_i)_{i=1\ldots g} \in \F_2^g$ is
given by $(e_i)=u+ \sum_{i=1}^g v_i.\tau_i$, for $i=1, \ldots , g$,
where $v=(v_i) \in \F_2^g$. Here we choose the sign of the square root
at random.

\paragraph{Step 2.} Case $\nu =1$. We proceed to a reverse duplication
step which can be done according to the Riemann duplication formulas
\cite{MR49:569} by finding $(a_u)_{u\in Z_2}$ such that
$$
\theta_2 \carac 0, {u} (0,\Omega)^2 = \frac{1}{2^g} \sum_{v \in
  Z_2} a_{v+u} a_v.
$$
This algebraic system may be solved by using the Groebner basis
algorithm and by picking up any solution. We check that we obtain a
valid theta null point by computing the associated $4$-theta null
point and verify that it satisfies the level-$4$ Riemann type
equations (compare with Section \ref{riemanntwop}). If this is not the
case, we go back to Step one and choose different signs for the square
roots.

Let $\Sp(2g, \Z)$ be the group of symplectic matrices acting on
$\mathbb{H}_g$.  Denote by $\Gamma_2$ the subgroup of $\Sp(2g, \Z)$
consisting of the elements $\gamma \in \Sp(2g, \Z)$ such that $\gamma
\equiv I_{2g} \bmod 2$ where $I_{2g}$ is the identity matrix of
dimension $2g$. The resulting theta null point $(a_u)_{u \in Z_2}$ has
the property that if we raise to the fourth power the coordinates of
its image by the Riemann duplication formula, we recover the values
deduced from the ramification points $\alpha_i$ of $\mathcal{C}$ by
the Thomae formulas. According to~\cite[pp.3.131]{MR86b:14017} this
means that $(a_u)_{u \in Z_2}$ is the theta null point associated to
the second power of a degree one symmetric ample line bundle defined
by $\Omega'$ where $\Omega' = \gamma.  \Omega$ for an element $\gamma
\in \Gamma_2$.

As all the computations described in this paragraph are algebraic,
they can be made directly in $\Z_q$ using the embedding $\psi$, and
even in $\F_q$ as $\mathcal{C}$ has good reduction modulo $p$. This
procedure gives the computation of a theta null point $(a_u)_{u\in
  Z_2}$ for a symmetric theta structure $\Theta_2$ associated to the
second power of a degree $1$ ample symmetric line bundle $\pol$ on
$J(C)$.  It should be noted that we have to assume that $\Theta_2$ is
rational over $\F_q$ in order to have that $a_u \in \F_q$, for $u\in
Z_2$.

Now, we describe a variation of Step 2 to cover the case $\nu >1$.
\paragraph{Step 2'.} Case $\nu > 1$.  From the knowledge of
$\theta_{1} \carac v,u(0, \Omega)$, we proceed to two reverse
duplication steps which can be done by finding successively for
$i=1,2$, $u,v \in Z_2$,
$\theta_{2} \carac v,u(0, (1/2^i)\Omega)$ such that
$$\theta_{2} \carac v,u(0,(1/2^{i-1})\Omega)^2=\frac{1}{2^g} \sum_{t\in
  Z_2} (-1)^{^tv t} \theta_{2} \carac 0,{u+t}(0, (1/2^{i})\Omega)
\theta_{2} \carac 0,u (0, (1/2^{i})\Omega).$$ 
This algebraic system can easily be solved by using the Groebner basis
algorithm and by picking up any suitable solution, we obtain $\theta_2
\carac u,v(0, (1/4).\Omega)$. If $v \in Z_2$, denote by $\hat{v}$ the
element of $\hat{Z}_2$ defined by $\hat{v} : Z_2 \rightarrow \{-1 , 1
\} \subset \Z$, $z=(z_i) \mapsto (-1)^{\sum_{i=1}^g z_i v_i}$.

On the other side, we have $Z_2 \simeq Z_4 / Z_2$ and let $\phi: Z_2
\simeq Z_4 / Z_2 \rightarrow Z_4$ be a section of the canonical
projection. Let $(b_u)_{u \in Z_4}$ be defined such that
$$\theta_2 \carac v,u (0, (1/4).\Omega)=\sum_{t \in Z_2} \hat{v}(t)
b_{\phi(u)+t},$$
where $Z_2$ is considered as a subgroup of $Z_4$ via $j \mapsto 2j$.
We can compute $(b_u)_{u \in Z_4}$ from the knowledge of $\theta_4 \carac
u,v(0, \Omega)$ by solving a linear system of fixed size.

We know~\cite[pp.334]{MR34:4269}, that $(b_u)_{u \in Z_4}$ is the
theta null point of $J(C)$ associated to a symmetric theta structure
of type $Z_4$. Now, plugging $(b_u)_{u \in Z_4}$ into the relations
given of the Riemann equations of level $2^\nu$ (compare with Section
\ref{riemanntwop}) together with the symmetric relations, we know
by~\cite[pp.87]{MR36:2621} that the so obtained system admits a unique
solution $(a_u)_{u \in Z_{2^\nu}}$ which may easily be computed using
a Groebner basis algorithm.

\paragraph{Step 3.} In the following we explain how to compute a level
$2^\nu p$-theta null
point from the above $2^\nu$-theta null point. We use the notation of Section
\ref{extheta2p}. Let $I$ be the ideal of the multivariate
polynomial ring $\F_q[x_u|u \in Z_{2^\nu p}]$ which is spanned by the
relations of Theorem
\ref{riemannrelations} together with the symmetry relations
$a_u = a_{-u}$ for $u \in Z_{2^\nu p}$.
We find $v \in Z_2$ such that $a_v$ is a unit.
Let $J$ be the image of $I$ under the evaluation map
\begin{eqnarray*}
\F_q[x_u|u \in
Z_{2^\nu p}] \rightarrow \F_q[x_u|u \in
Z_{2^\nu p},2^\nu u \not=0], \quad x_u \mapsto \left\{
\begin{array}{l@{,\hsp}l}
\frac{a_u}{a_v} & u \in Z_{2^\nu} \\
\frac{x_u}{a_v} & \mathrm{else}
\end{array} \right.
\end{eqnarray*}
If we chose an order on the set of the remaining variables $x_u$,
$u\in Z_{2^\nu p} \setminus Z_{2^\nu}$, it defines a well-ordered
lexicographic monomial basis on $J$. One can compute a reduced
Groebner basis for $J$ with respect to this monomial order. By
Theorem \ref{multiplicity}, the closed subscheme of
$\mspec(\F_q[x_u|u \in Z_{2^\nu p},2^\nu u \not= 0])$ defined by $J$
is of dimension $0$. The last polynomial of this reduced Groebner
basis is a univariate polynomial $f(x)\in \F_q[x]$ and by
\cite{BeMoMaTr94}, we generically have
$$\F_q[x_u | u \in Z_{2^\nu p}, 2^\nu u \neq 0 ] / J \simeq \F_q[x] /(f),$$
where the degree of $f$ is uniformly bounded by a function of $g$ and
$p$ which is constant with respect to the complexity parameter
$\log_p(q)$. According to Proposition
\ref{multiplicity}, one can pick up a solution $(a_u)_{u \in
  Z_{2^\nu p}}$ corresponding to the root of $f$ with multiplicity $p^g$.

\paragraph{Lift phase}
Let $(a_u)_{u \in Z_{2^\nu p}}$ with $a_u \in \F_q$ the null point obtained
from the initialization phase.
Let $\mathcal{R}$ be the set of polynomials in $\Z_q[x_u, y_u | u \in
Z_{2^\nu p}]$ deduced
from the relations of Theorem \ref{thetarelations} and Theorem
\ref{riemannrelations}, where in the Riemann type relations $a_u$ is
replaced by $y_u$ for all $u \in Z_{2^\nu p}$, and in the Frobenius
type relations, $a_u$ and $a_u^{\sigma^2}$ are
replaced by $x_u$ and $y_u$, respectively, for all $u\in Z_{2^\nu p}$. We put $x_0 = y_0 =1$ and use
the symmetry relations in order to
obtain a set of multivariate polynomials depending on $1/2 [
(2^\nu p)^g-2^{\nu g}]+2^{\nu g}-1$ variables $x_u$ and a subset of the same
cardinality of the coordinates
$y_u$.

Pick up any subset $1/2 [(2^\nu p)^g-2^{\nu g}]+2^{\nu g}-1/2
[g(g+1)]-1$ of Riemann type equations and $1/2 [g(g+1)]$ Frobenius
type equations to form an application $$\Phi: \Z_q^{2^{\nu
    g-1}(p^g+1)-1} \times \Z_q^{2^{\nu g-1}(p^g+1)-1} \mapsto
\Z_q^{2^{\nu g-1}(p^g+1)-1}.$$ For a suitable choice of the Riemann
and Frobenius equations, the conditions of~\cite[Th.2]{LL07} are
satisfied and one can use the lifting algorithm given ibid in order to
lift in a canonical way the theta null point $(a_u)_{u \in Z_{2^\nu
    p}}$ to obtain the canonical theta null point $(b_u)_{u \in
  Z_{2^\nu p}}$ of the canonical lift with $b_u \in \Z_q$.

\paragraph{Norm phase}
We use the notation of Section \ref{gtrace}.
By Proposition \ref{formula1}, one computes the
product of the Eigenvalues $\pi_1, \ldots , \pi_2$ of the absolute $q$-Frobenius
morphism $F$, which are units modulo $p$, as
$$
  \pi_1 \ldots \pi_g = \Norm_{\Q_q / \Q_p} \left( \frac{\sum_{u \in
        Z_2}b_u}{\sum_{u \in Z_{2^\nu p}}b_u} \right).$$

\paragraph{Reconstruction phase}
The problem here is to be able to recover $\chi_{F}$ where
from the
knowledge of $\lambda = \pi_1 \ldots \pi_g$ computed up to a certain
precision $m$. If the genus $g$ of $C$ is one, then $\chi_F$ is
immediately computed from $\pi_1$. In the case that the curve $C$ has genus $2$ one
can use the formulas described in~\cite{Ritzenthaler03}.

From now on, we suppose that $g \geq 2$.
Following~\cite{Ritzenthaler03, LL07}, one can use the LLL algorithm
in order to recover the symmetric polynomial of $C$ considered as a
curve over $\F_q$ that we denote by $P_{sym}$. By definition, the
symmetric polynomial of $C$ is the unitary degree $2^{g-1}$ polynomial
whose roots are $x + q^g /x$ where $x$ runs over all products of $g$
terms taken successively in the pairs $\{ \pi_1, q/ \pi_1 \}, \ldots ,
\{ \pi_g, q/ \pi_g \}$. It is easy to see that $P_{sym}$ is a
polynomial with coefficients in $\Z$ and that there exists a quick
algorithm, at least when $\chi_{F}$ is irreducible, to compute
$\chi_{F}(\pm X)$ from the knowledge of $P_{sym}$ (see
\cite{Ritzenthaler03}). By~\cite{MR34:5829}, $\chi_F$ is irreducible
when the Jacobian of $C$ is absolutely simple, and this last condition is
generic. A last check on the curve allows us to obtain $\chi_{F}$.
We explicitly determine bounds on the precision $m$ needed when the
genus increases.

The computation of $P_{sym}$ from $\eta = \lambda + q^g /\lambda$, can
be done by LLL reducing the lattice whose basis vectors are given by
the columns of the following matrix:
\begin{displaymath}
  \left[
    \begin{array}{ccccc}
      \Upsilon \times M_0 & \Upsilon \times M_1 & \cdots & \Upsilon \times
      M_{2^{g-1}+1}  & \Upsilon \times p^m\\
      0 & 0    & \cdots &  p^{\lfloor n\times S_{2^{g-1}+1}
      \rfloor} & 0\\
      0 & 0  & \cdots & 0 & 0\\
      \vdots & \vdots & \ddots & \vdots & \vdots\\
      0 & p^{\lfloor n\times S_2 \rfloor} & \cdots & 0 & 0\\
      p^{\lfloor n\times S_0 \rfloor} & 0    & \cdots & 0  & 0
    \end{array}
  \right],
\end{displaymath}
where
\begin{eqnarray*}
[M_i]_{i=0, \ldots, 2^{g-1}} = \left[\ p^{(2^{g-1}-1-i)n}\eta^i \bmod 2^m\ |\ i \in \{0,\ldots, 2^{g-1}-1\}\right] \\
\cup [\eta^{2^{g-1}}\bmod 2^m].
\end{eqnarray*}
and
\begin{displaymath}
  [S_i]_{i=0, \ldots, 2^{g-1}+1} = \left[\frac{(i-1)(g-2)}{2}\ |\ i \in \{0,\ldots,
    2^{g-1}-1 \}\right]\cup \left[\frac{2^{g-1}(g-2)}{2}+1\right]
\end{displaymath}
where $\Upsilon$ is some arbitrarily large constant.
The power of $p$ appearing in the $M_i$ are meant to take into account
the valuation of the coefficients of $P_{sym}$ while the $S_i$ offset
the difference between the modulus of the coefficients of $P_{sym}$.
This matrix can be used as long as $2^g<n$.

The coefficients of $P_{sym}$ are components of a vector $\Pi$ of
small norm in ${\mathcal L}$. Asymptotic estimates state that a
lattice reduction using the LLL
algorithm~\cite{LenstraLeLo82,MR99b:94027} can compute it if its
euclidean norm $||\ ||_2$ (or sup-norm $||\ ||_1$) satisfy
\begin{math}
  ||\Pi||_1 \leqslant ||\Pi||_2 \leqslant \det({\mathcal L})^{1/\dim {\mathcal L}}.
\end{math}
Since we can evaluate, on the first hand, the norm $||\ ||_1$ of $\Pi$
as a function of $n$ and $g$ using the Riemann hypothesis for curves
and, on the other hand, the determinant of ${\mathcal L}$ as a
function of $m$, $g$ and the size of $\Upsilon$ (product of the
elements on diagonal), this yields
\begin{displaymath}
  m > n\, \left[ \ln(p) \, g^2 \, 2^{3g-5} - \frac{\ln(p)}{\ln(2)} \,
    (g-2) \, 2^{2(g-1)} \right].
\end{displaymath}

From the knowledge of the roots of $P_{sym}$, it is possible to
recover the set $\{ \pi_i^2, i=1\ldots g
\}$~\cite[pp.119]{Ritzenthaler03} where the $\pi_i$ are the roots of
$\chi_{F}$ which are units modulo $p$. In the case that $\chi_{F}$ is
irreducible, we immediately deduce $\chi_{F}$ from the knowledge of
its roots. In order to remove the sign ambiguity it remains to
determine whether the order of the Jacobian is $\chi_F(1)$ or
$\chi_F(-1)$ by multiplying points with possible group orders.

\section{Complexity analysis}
\label{complexity}

In this section, we give a complexity analysis of the previously
described algorithm.

\paragraph{Initialisation phase} The dominant complexity for this
phase is the Groebner basis computation of Step $3$. Let $J$ be as in
Section \ref{extheta2p}.

Let $D$ be the degree of the ideal $J$. According to~\cite{Laz3}, the
computation of a Groebner basis with respect to a lexicographic
monomial order can be done by doing a Gaussian elimination on a matrix
of dimension given by the number of monomials of degree $D$.  The
theorem of Bezout gives a bound on $D$ which is the product of the
degrees of the polynomials generating the ideal $J$. As the number of
polynomial relations defined by Theorem \ref{riemannrelations} depends
only on $g$ and $p$ and the degree of these relations is constant, $D$
is fixed as long as $g$ and $p$ are. This means that the Groebner
basis can be computed by doing a Gaussian elimination on a matrix of
fixed dimension whose coefficients are in $\F_{q}$. This requires
$O(n^\mu)$ time operations where $n$ and $\mu$ have been defined at
the end of the introduction.

We remark that~\cite{Laz2} gives a much finner bound on the degree of
the Groebner basis if one chooses for the monomial order of $J$ a graded
reverse lexicographic order. As a consequence, one should better first
compute a Groebner basis of $J$ for a graded reverse lexicographic
order and then use the FGLM algorithm in order to perform the change
of order towards a lexicographic order.

\paragraph{Lift phase}
In the case that the base field admits a Gaussian Normal Basis one can
lift in time $O(\log(n) m^\mu n^\mu )$ using the algorithm~\cite{LL03}.
In the general case, one can use the algorithm of Harley which is in
$O(\log(m) m^\mu n^\mu)$ time complexity~\cite[pp.254]{MR2162716}.

\paragraph{Norm phase}
In the case that the base field admits a Gaussian Normal Basis of type
$t$, H. Y. Kim et al. described an algorithm of the type ``divide and
conquer'' in order to compute such a norm. This algorithm has time
complexity $O(\log (n)m^{\mu} n^{\mu})$.  For the general case, one
can use the algorithm described in \cite[pp.263]{MR2162716} in order
to compute the norm in time $O(\log(n) m^\mu n^\mu)$.

\paragraph{Reconstruction phase}
For fixed genus, the LLL step consists in applying LLL to a lattice of
fixed dimension. Its complexity is the size of the coefficients of the
matrix times the cost for one integer multiplication. This yields,
with asymptotically fast algorithms for multiplying integers, a
$O(m^{1+\mu})$ complexity in time. The cost of the second step is
determined by the computation of roots of polynomials over $\C_p$ and
requires $O(m^{\mu})$. Finally, checking that the order of the
Jacobian is $\chi_F(\pm 1)$ needs $O(m)$ applications of the group
law, that is to say a complexity in time equal to $O(mn^\mu)$ with
Cantor formulas~\cite{MR88f:11118}.

\section{A finiteness theorem}\label{secfinit}
This section is devoted to the proof of Proposition
\ref{multiplicity}. In Section \ref{riemanntwop}, we recall several
equivalent presentations of the Riemann equations which are used in
the course of the proof given in Section \ref{theproof}.

We first fix some notations. Let $A$ be an abelian variety over a
field $k$ and $\pol$ be an ample symmetric line bundle over $A$. Let
$\Theta_\ell$ be a theta structure for $\pol$ of type $Z_\ell$.
Let $(\theta^{\Theta_\ell}_i)_{i \in Z_\ell}$ be a basis of the
global sections of $\pol$ determined by the theta structure
$\Theta_\ell$ and let $x$ be a closed point of $A$. Denote by
$\mathscr{O}_{A}$ the structure sheaf of $A$ and let $\rho:
\mathscr{O}_{A,x} \rightarrow k'$ be the evaluation morphism onto the
residual field $k'$ of $x$.  We can choose an isomorphism $\xi_x:
\pol_x \simeq \mathscr{O}_{A,x}$. For all $i\in Z_\ell$ the
evaluation of the section $\theta^{\Theta_\ell}_i$ in $x$ is
\begin{equation}\label{eval}
\theta^{\Theta_\ell}_i(x,\xi_x)=\rho \circ
\xi_x(\theta^{\Theta_\ell}_i).
\end{equation}
The resulting projective point that we denote by
$(\theta^{\Theta_\ell}_i(x))_{i \in Z_\ell}$ over $\overline{k}$
does not depend on the choice of the isomorphism $\xi_x$.

\subsection{Riemann's equations revisited}
\label{riemanntwop1}
Let $A$ be a $g$ dimensional ordinary abelian variety over a finite field
$\F_q$ of characteristic $p>2$. Let $\pol$ be an ample symmetric degree $1$ line
bundle on $A$.  Let $\nu>0$ be an integer and $\ell$ be an odd prime
number which can be equal to $p$. Assume that we are given a
symmetric theta structure $\Theta_{2^\nu \ell}$ of type $Z_{2^\nu \ell}$ for
the line bundle $\pol^{2^\nu \ell}$.  The data of $\Theta_{2^\nu \ell}$
defines a basis of global sections of $\pol^{2^\nu \ell}$ that we denote
by $(\theta_u)_{u \in Z_{2^\nu \ell}}$ and as a consequence, a
projective embedding of $A$ in $\proj^{(2^\nu \ell)^g-1}$.

We denote the theta null point with respect to the theta structure
$\Theta_{2^\nu \ell}$ by $(a_u)_{u \in Z_{2^\nu \ell}}$.
The Riemann's equations for level $2^\nu \ell$ are given by the following
theorem

\begin{theorem}
\label{riemannquad}
For all $x,y,u,v \in Z_{2^{\nu+1} \ell}$ which are congruent modulo
$Z_{2^\nu \ell}$, and all $l \in \hat{Z}_2$, we have
\begin{multline}\label{pone}
\big(\sum_{t \in Z_2} l(t) \theta_{x+y+t}*\theta_{x-y+t}\big).\big(\sum_{t \in Z_2} l(t)
a_{u+v+t} a_{u-v+t}\big)= \\
=\big(\sum_{t \in Z_2} l(t) \theta_{x+u+t}*\theta_{x-u+t}\big).\big(\sum_{t \in Z_2} l(t)
a_{y+v+t} a_{y-v+t}\big). 
\end{multline}
\end{theorem}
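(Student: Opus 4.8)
The statement is the ``section-level'' incarnation of Riemann's relations: in each of the two products the first factor $\sum_{t\in Z_2}l(t)\,\theta_{\bullet+t}*\theta_{\bullet+t}$ is a genuine global section of $\pol^{2^{\nu+1}\ell}$ formed with Mumford's $*$-product (\cite[$\S$3]{MR34:4269}), whereas the second factor is the scalar obtained by evaluating the analogous block at the origin $0_A$; specializing the first factor to $0_A$ as well turns~(\ref{pone}) back into Theorem~\ref{riemannrelations}. The plan is therefore to prove~(\ref{pone}) by the very mechanism that yields Theorem~\ref{riemannrelations}, namely Mumford's four-point Riemann theta relation (\cite[$\S$3]{MR34:4269}) --- equivalently, the general multiplication formula of Theorem~\ref{generalmult} and Corollary~\ref{mummult} applied to the doubling isogeny --- the single new ingredient being that the relevant quadruple of arguments is a fixed point of the Riemann transformation, so that leaving one of the four blocks ``unspecialized'' costs nothing.

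First I would reduce to a pointwise check. Since $A$ is reduced, two global sections of $\pol^{2^{\nu+1}\ell}$ coincide as soon as they agree at every closed point, and, once the auxiliary trivialisations $\xi_z$ are fixed as in~(\ref{eval}), the $*$-product of two sections evaluates at $z$ to the product of the two evaluations up to a scalar independent of the indices, which cancels between the two sides of~(\ref{pone}). So it will suffice to establish, for all $z$,
\begin{multline*}
\Big(\sum_{t\in Z_2}l(t)\,\theta_{x+y+t}(z)\,\theta_{x-y+t}(z)\Big)\Big(\sum_{t\in Z_2}l(t)\,a_{u+v+t}\,a_{u-v+t}\Big)\\
=\Big(\sum_{t\in Z_2}l(t)\,\theta_{x+u+t}(z)\,\theta_{x-u+t}(z)\Big)\Big(\sum_{t\in Z_2}l(t)\,a_{y+v+t}\,a_{y-v+t}\Big).
\end{multline*}
I would obtain this from Mumford's Riemann relation (\cite[$\S$3]{MR34:4269}) --- equivalently from Corollary~\ref{mummult} used on both arguments of the isogeny $\xi\colon A\times A\to A\times A$, $(z_1,z_2)\mapsto(z_1+z_2,z_1-z_2)$ --- applied to the quadruple of points $(z,z,0_A,0_A)$: this quadruple is fixed by the transformation induced on $A^4$ by one half of the Riemann matrix, so the relation collapses to a comparison of the two ways of grouping the index quadruple $(x,y,u,v)$ through $\xi$, the grouping $\big((x,y),(u,v)\big)$ producing the left-hand side and $\big((x,u),(y,v)\big)$ the right-hand side, the passage between them being exactly the transposition $y\leftrightarrow u$ that appears in the equivalence relation preceding Theorem~\ref{riemannrelations}. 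The hypothesis that $x,y,u,v$ be congruent modulo $Z_{2^\nu\ell}$ is precisely what forces each of $x\pm y$, $u\pm v$, $x\pm u$, $y\pm v$ into the subgroup $Z_{2^\nu\ell}\subset Z_{2^{\nu+1}\ell}$, so that all four blocks are well-defined level-$2^\nu\ell$ quantities fitting the point tuple $(z,z,0_A,0_A)$.

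The main obstacle will be the bookkeeping of theta structures and characters: one must verify that the $*$-product and the doubling isogeny interact with the symmetric theta structure $\Theta_{2^\nu\ell}$, and with the product theta structure it induces on $A\times A$, in such a way that the \emph{same} character $l\in\hat Z_2$ occurs in all four blocks, with no stray sign or root of unity slipping in. This is the content of~\cite[$\S$3]{MR34:4269}, together with compatibility statements between $\Theta_{2^\nu\ell}$, $\Theta_{2^{\nu+1}\ell}$ and their product structures of the kind already invoked in the proof of Theorem~\ref{thetarelations}; once those are granted, nothing is needed beyond the elementary remark that $(z,z,0_A,0_A)$ is fixed by the Riemann transformation.
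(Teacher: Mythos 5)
Your plan rests on the same source as the paper (Mumford's \S 3 machinery for the doubling isogeny) and can be made to work, but the paper's actual proof is both shorter and different in execution, and the place where you say the relation ``collapses'' is exactly where the content sits. The paper never reduces to closed points, never chooses trivializations $\xi_z$, and never invokes a four-point Riemann identity at the fixed quadruple $(z,z,0_A,0_A)$: it simply quotes the addition formula of \cite[pp.~339]{MR34:4269}, namely equation (\ref{ptwo}), which factors each character block $\sum_{t \in Z_2} l(t)\,\theta_{x+y+t}*\theta_{x-y+t}$ into a product of two \emph{linear} character sums (one in theta null values, one in sections, at level $2^{\nu+1}\ell$), together with its specialization (\ref{pthree}) at the origin for the null-value blocks; after that, both sides of (\ref{pone}) are literally the same product of four linear factors, grouped differently, and the identity is pure commutativity. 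Your regrouping $\big((x,y),(u,v)\big)$ versus $\big((x,u),(y,v)\big)$ is the same phenomenon, but as stated it is not yet a proof: the four-point Riemann relation in its usual averaged form (a single quartic monomial equal to a $\frac{1}{2^g}\sum_{t \in H_2}$ character-weighted sum, as in Theorem \ref{classicalbis}, and with an unspecified constant $\lambda$ attached to the choice of trivializations) does not directly yield the fixed-character block identity (\ref{pone}), in which the \emph{same} $l \in \hat{Z}_2$ appears in all four blocks and no constant occurs. To extract (\ref{pone}) from it you would have to Fourier-invert over the $Z_2$-translated index quadruples and track the constants, or else apply the addition formula blockwise --- at which point you have reproduced the paper's three-line argument. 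So the ``bookkeeping of theta structures and characters'' you defer is not incidental: it is precisely the factorization (\ref{ptwo}), and invoking it from the start also removes the need for the pointwise reduction altogether.
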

\begin{proof}
By \cite{MR34:4269}[pp. 339], for all $x,y \in Z_{2^{\nu+1} \ell}$ such that
$x+y \in Z_{2^\nu \ell}$ and for all $l \in \hat{Z}_2$, we have
\begin{equation}\label{ptwo}
\sum_{t \in Z_2} l(t) \theta_{x+y+t}*\theta_{x-y+t}
=\big(\sum_{t \in Z_2} l(t).a_{y+t} \big). \big(
  \sum_{t \in Z_2} l(t). \theta_{x+t} \big).
\end{equation}
In particular, we have, 
\begin{equation}\label{pthree}
\sum_{t \in Z_2} l(t) a_{x+y+t}.a_{x-y+t} =\big(\sum_{t \in Z_2} l(t).a_{y+t} \big). \big(
  \sum_{t \in Z_2} l(t). a_{x+t} \big).
\end{equation}
Now, using (\ref{ptwo}) and (\ref{pthree}) the left hand side of
(\ref{pone}) can be written as
\begin{equation}\label{peq1}
\big[ \big(\sum_{t \in Z_2} l(t).a_{y+t} \big). \big(
\sum_{t \in Z_2} l(t). \theta_{x+t} \big) \big] \big[ \big(\sum_{t \in
  Z_2} l(t).a_{u+t} \big). \big( \sum_{t \in Z_2} l(t). a_{v+t} \big)
\big].
\end{equation}
In the same manner the right hand side of (\ref{pone}) can be written as
\begin{equation}\label{peq2}
\big[ \big(\sum_{t \in Z_2} l(t).a_{u+t} \big). \big(
\sum_{t \in Z_2} l(t). \theta_{x+t} \big) \big] \big[ \big(\sum_{t \in
  Z_2} l(t).a_{y+t} \big). \big( \sum_{t \in Z_2} l(t). a_{v+t} \big)
\big].
\end{equation}
Obviously, (\ref{eq1}) and (\ref{eq2}) are equal.
\end{proof}

In the following, we suppose that $k$ is the field of definition of
$(a_u)_{u \in Z_{2^\nu \ell}}$.  In this section, we denote by
$I_{\Theta_{2^\nu \ell}}$ the ideal of $k[x_u | u \in Z_{2^\nu \ell}]$
generated by the relations of Theorem \ref{riemannquad} where the
$\theta_u$ are replaced by $x_u$. It is proved in
\cite[$\S$4]{MR34:4269} that if $\nu \geq 2$ and $\ell \neq p$, $A$ is
isomorphic to the closed projective sub-variety of $\proj_k^{(2^\nu
  \ell)^g-1}$ defined by the homogeneous ideal $I_{\Theta_{2^\nu
    \ell}}$.

We recover Theorem \ref{riemannrelations} from Theorem
\ref{riemannquad}, by evaluating at the point $O$ of $A$ the sections
of $\pol^{2^\nu \ell}$.  The relations of Theorem \ref{riemannrelations} can be
reformulated, by considering the matrix
$$M=\left( \begin{matrix}
1 & 1 & 1 & 1 \\
1 & 1 & -1 & -1 \\
1 & -1 & 1 & -1 \\
1 & -1 & -1 & 1 \\
\end{matrix}
\right),
$$
and $(x_1,y_1,u_1,v_1),(x_2,y_2,u_2,v_2) \in (Z_{2^\nu \ell})^4$ such that
$$2 \left( \begin{matrix} x_2 \\ y_2 \\ u_2 \\ v_2 \end{matrix} \right) = M 
\left( \begin{matrix} x_1 \\ y_1 \\ u_1 \\ v_1 \end{matrix} \right).$$
If we suppose moreover that $x_2 + y_2 \in 2Z_{2^\nu \ell}$ and $u_2+
v_2 \in 2Z_{2^\nu \ell}$, we have
\begin{eqnarray}\label{eqri1}
\big(\sum_{t \in Z_2} l(t) a_{x_1+t} a_{y_1+t}\big).\big(\sum_{t \in Z_2} l(t)
a_{u_1+t} a_{v_1+t}\big)= \\
=\big(\sum_{t \in Z_2} l(t) a_{x_2+t} a_{y_2+t}\big).\big(\sum_{t \in Z_2} l(t)
a_{u_2+t} a_{v_2+t}\big), 
\end{eqnarray}
$l \in \hat{Z}_2$.

By developing and summing up over all the characters of $\hat{Z}_2$
the Equation (\ref{eqri1}), we obtain
\begin{eqnarray}\label{eqri}
\sum_{t \in Z_2} a_{x+t} a_{y+t} a_{u+t} a_{v+t} = \sum_{t \in Z_2}
a_{x- \tau + t} a_{y + \tau +t} a_{u +\tau +t} a_{v + \tau +t},
\end{eqnarray}
for all $x,y,u, v \in Z_{2^{\nu} \ell}$ and $\tau \in Z_{2^\nu \ell}$ such
that $2 \tau = x-y-u-v$.

These relations can also be presented in their classical form. For
this, we keep the notations of the previous paragraph and suppose from
here that $\nu \geq 2$. Recall that $(a_u)_{u \in Z_{2^\nu
    \ell}}$ denote the theta null point defined by the theta structure
$\Theta_{2^{\nu} \ell}$.  Let $H_{2^{\nu} \ell}=Z_{{2^\nu} \ell} \times
\hat{Z}_{2^{\nu-1}}$ and for all $x=(x',x'') \in H_{{2^\nu} \ell}$, let $b_x= \sum_{t
  \in Z_{2^{\nu-1}}} x''(t) a_{x'+t}$.  Let $H_2=\frac{1}{2} Z_2 \times
(\hat{Z_{2^\nu \ell}})_2= \{ x\in H_{{2^\nu}\ell} | x\,\text{is}\, 2- \text{torsion
  modulo} \, Z_2 \times \{ 0 \} \}$.

\begin{theorem}\label{classical}
  Let $(x,y,u,v) \in H_{{2^\nu} \ell}$ and $\tau=(\tau', \tau'') \in H_{{2^\nu}
    \ell}$ such that $2 \tau = x-y-u-v$ then
\begin{eqnarray*}
  b_{x}b_{y} b_{u}b_{v} =
  \frac{1}{2^g} \sum_{t \in H_2} A(2t')
  b_{x-\tau +t} b_{y+\tau +t} b_{u+\tau+t} b_{v+\tau+t},
\end{eqnarray*}
where $t=(t',t'')$ and $A=\tau''+t''$.
\end{theorem}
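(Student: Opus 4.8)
The plan is to deduce Theorem \ref{classical} from the Riemann relations of Section \ref{riemanntwop1} (the character form (\ref{eqri1}), equivalently Theorem \ref{riemannrelations}) by a finite Fourier transform along the subgroup $Z_{2^{\nu-1}}\subset Z_{2^\nu\ell}$. The coordinates $b_x$, $x=(x',x'')$, are by construction the partial Fourier transform of the theta null coordinates $(a_u)$ for the translation action of $Z_{2^{\nu-1}}$, with $x''\in\hat{Z}_{2^{\nu-1}}$ the transform parameter, $b_{(x',x'')}=\sum_{s\in Z_{2^{\nu-1}}}x''(s)\,a_{x'+s}$; so the statement is exactly the classical form (the form ``with theta characteristics'') of the symmetric relations already established. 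I would first record the only combinatorial ingredients needed: Fourier inversion over $Z_{2^{\nu-1}}$, and the character orthogonality relations for $Z_{2^{\nu-1}}$ and for $Z_2$.

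I would then group the left-hand side as $(b_xb_y)(b_ub_v)$ and unfold each factor: writing $b_xb_y=\sum_{s_1,s_2\in Z_{2^{\nu-1}}}x''(s_1)y''(s_2)\,a_{x'+s_1}a_{y'+s_2}$, fixing the difference $d=s_1-s_2$ and writing $s_2=c+t$ with $c$ running over coset representatives of $Z_2$ in $Z_{2^{\nu-1}}$ and $t\in Z_2$, one obtains $b_xb_y=\sum_{d,c}x''(c+d)\,y''(c)\sum_{t\in Z_2}l(t)\,a_{(x'+c+d)+t}a_{(y'+c)+t}$, where $l$ is the restriction of the character $x''+y''$ to $Z_2$; treating $b_ub_v$ the same way produces an inner bilinear form with character the restriction of $u''+v''$. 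The key observation is that the hypothesis $2\tau=x-y-u-v$, read on the $\hat{Z}_{2^{\nu-1}}$-component, forces $x''+y''+u''+v''$ to vanish on $Z_2$ (since $2\tau''$ does), so the two inner characters coincide and these products of bilinear forms are precisely of the shape appearing in (\ref{eqri1}); one may thus apply (\ref{eqri1}) termwise, replacing the inner quadruple by an equivalent one via the matrix $M$ of Section \ref{riemanntwop1}. Re-summing over $d,c$ and their primed analogues reassembles the $b$-coordinates $b_{x-\tau+t},b_{y+\tau+t},\ldots$ on the right-hand side; pushing the residual character factors $x''(c+d)\,y''(c)\cdots$ through the orthogonality relations then produces the summation over $t\in H_2$, the normalizing constant $2^{-g}$, and the weight $A(2t')=(\tau''+t'')(2t')$.

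The genuine content — and the step I expect to be the main obstacle — is the bookkeeping: tracking the groups $H_{2^\nu\ell}$, $H_2$, $Z_{2^{\nu-1}}$, $Z_2$ and their duals coherently through the several reindexings, checking that the divisibility-by-$2$ conditions needed to invoke (\ref{eqri1}) hold for \emph{every} term that actually occurs (not merely that the inner characters match), and verifying that the iterated orthogonality sums collapse to exactly $2^{-g}$ and to the character $A(2t')$ rather than to some other power of $2$ or a spurious twist. Since Theorem \ref{classical} is nothing but Mumford's classical form of Riemann's theta relations transcribed into the present notation, the alternative I would in fact adopt in the write-up is to identify the $b_x$ with Mumford's theta constants with characteristics by a short notational dictionary and then quote the corresponding computation of \cite[$\S$3]{MR34:4269}.
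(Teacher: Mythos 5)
Your proposal is correct and takes essentially the same route as the paper: the paper's proof of Theorem \ref{classical} is a one-line reduction to the already-established relations (\ref{eqri}) (equivalently the character form (\ref{eqri1})) ``following exactly the same computations as \cite[pp.~334]{MR34:4269}'', which is precisely the partial-Fourier-transform / notational-dictionary passage to Mumford's classical form that you sketch and ultimately propose to quote. The explicit unfolding and orthogonality bookkeeping you outline is simply that computation written out, so there is no substantive difference in approach.
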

\begin{proof}
  It is possible to deduce these relations from (\ref{eqri}) following
  exactly the same computations as~\cite[pp. 334]{MR34:4269}.
\end{proof}

Let $(\theta_u)_{u \in Z_{{2^\nu}}}$ denote the basis of global
sections of $\pol^{{2^\nu}}$ defined by the theta structure
$\Theta_{{2^\nu}}$.  Let $H_{2^\nu} = Z_{2^\nu} \times
\hat{Z}_{2^{\nu-1}}$ and $H_2 = \frac{1}{2} Z_2 \times
(\hat{Z_{2^\nu}})_2$.  For all $x=(x',x'') \in H_{2^\nu}$, set
$\vartheta_x= \sum_{t \in Z_{2^{\nu-1}}} x''(t) \theta_{x'+t}$.  We have the
\begin{theorem}\label{classicalbis}
  Let $P,Q$ be two closed points of $A$.  Denote by $\mathscr{O}_{A}$
  the structure sheaf of $A$.  For $X \in \{ P, Q, P+Q,P-Q, 0 \}$, we
  choose isomorphisms $\xi_X : \pol^{{2^\nu}}_X \simeq
  \mathscr{O}_{A,X}$.  Let $(x,y,u,v) \in H_{2^\nu}$ and $\tau=(\tau',
  \tau'') \in H_{2^\nu}$ such that $2 \tau = x-y-u-v$, we have
\begin{eqnarray*}
  \vartheta_{x}(P+Q,\xi_{P+Q})\vartheta_{y}(P-Q,\xi_{P-Q}) \vartheta_{u}(0,\xi_0)
  \vartheta_{v}(0,\xi_0) =\\
  =  \lambda \frac{1}{2^g} \sum_{t \in H_2} A(2t')
  \vartheta_{x-\tau+t}(P,\xi_P)\vartheta_{y+\tau+t}(P,\xi_P) \vartheta_{u+\tau+t}(Q,\xi_Q)
  \vartheta_{v+\tau+t}(Q,\xi_Q),
\end{eqnarray*}
where $t=(t',t'')$, $A=\tau''+t''$ and $\lambda \in \overline{k}$ is independent of
the choice of $(x,y,u,v) \in H_{2^\nu}$.
\end{theorem}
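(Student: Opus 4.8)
The statement is the general-point form of the Riemann relations, of which Theorem~\ref{classical} is the special value $P=Q=0$. Concretely, if one runs the Riemann addition formula underlying Theorem~\ref{classical} but takes the four evaluation points to be $(P+Q,\,P-Q,\,0,\,0)$, the Hadamard reshuffling governed by the matrix $M$ of Section~\ref{riemanntwop1} turns the companion four points into $(P,\,P,\,Q,\,Q)$ --- exactly the configuration of the theorem. So the plan is to reproduce the derivation of Theorem~\ref{classical}, i.e.\ the computation of~\cite[pp.~334]{MR34:4269}, with the theta sections evaluated at $0,P,Q,P+Q,P-Q$ rather than only at the origin; for $P=Q=0$ one must then recover Theorem~\ref{classical} with $\lambda=1$.

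The only new ingredient is an addition formula for the products $\vartheta_a(P+Q)\,\vartheta_b(P-Q)$. This is provided by the isogeny $\delta\colon A^2\to A^2$, $(u,v)\mapsto(u+v,u-v)$, which is the map $\xi$ of Section~\ref{genmult} for $m=1$, $n=-1$. Because $\pol$ is symmetric, the biextension contributions to $\delta^*(\pol^{2^\nu}\boxtimes\pol^{2^\nu})$ coming from the sum map and from the difference map enter with opposite signs and cancel; equivalently, applying Lemma~\ref{isso} with $i=j=2^\nu$ gives $\delta^*(\pol^{2^\nu}\boxtimes\pol^{2^\nu})\cong\pol^{2^{\nu+1}}\boxtimes\pol^{2^{\nu+1}}$. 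After fixing a symmetric theta structure of type $Z_{2^{\nu+1}}$ compatible with $\Theta_{2^\nu}$, the product theta structures on the two sides become $\delta$-compatible, so Theorem~\ref{generalmult} (and Corollary~\ref{mummult}) applies and yields, for one scalar $\lambda_\delta$, an expansion
\[
\vartheta_a(P+Q,\xi_{P+Q})\,\vartheta_b(P-Q,\xi_{P-Q})=\lambda_\delta\sum_{c,d}\gamma_{ab}^{cd}\,\vartheta_c(P,\xi_P)\,\vartheta_d(Q,\xi_Q)
\]
whose structure constants $\gamma_{ab}^{cd}$ are theta null values; this is the section analogue of the null-point identities~\eqref{ptwo} and~\eqref{pthree} used in the proof of Theorem~\ref{riemannquad}.

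With the addition formula in hand one argues exactly as in the two quoted computations. Substituting it into the section form of the Riemann relations and summing over $\chi\in\hat{Z}_2$, as in the passage from~\eqref{eqri1} to~\eqref{eqri}, gives the raw four-section identity in the basis $(\theta_u)_{u\in Z_{2^\nu}}$: for $x,y,u,v\in Z_{2^\nu}$ and $\tau$ with $2\tau=x-y-u-v$,
\[
\sum_{t\in Z_2}\theta_{x+t}(P{+}Q)\,\theta_{y+t}(P{-}Q)\,\theta_{u+t}(0)\,\theta_{v+t}(0)=\lambda\sum_{t\in Z_2}\theta_{x-\tau+t}(P)\,\theta_{y+\tau+t}(P)\,\theta_{u+\tau+t}(Q)\,\theta_{v+\tau+t}(Q),
\]
where $\lambda\in\overline{k}$ absorbs $\lambda_\delta$ together with the chosen trivialisations $\xi_0,\xi_P,\xi_Q,\xi_{P\pm Q}$; at the origin the rigidification of $\pol$ allows $\lambda=1$, which is why~\eqref{eqri} carries no constant, and $\lambda$ is independent of $(x,y,u,v)$ because it depends only on $\delta$, on the fixed theta structures and on the $\xi_X$. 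It then remains to run the two-step finite Fourier transform over $Z_{2^{\nu-1}}$ and then over $Z_2$ of~\cite[pp.~334]{MR34:4269}, identical to the one in the proof of Theorem~\ref{classical}: inserting $\vartheta_x=\sum_{t\in Z_{2^{\nu-1}}}x''(t)\theta_{x'+t}$ on both sides and collecting, character orthogonality produces the factor $1/2^g$ and the weight $A(2t')=(\tau''+t'')(2t')$ of the statement; the same linear operation acts on both sides, so $\lambda$ survives unchanged.

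The ingredients --- the isogeny $\delta$ with its bundle identity, the multiplication formula of Section~\ref{genmult}, the character Fourier transform --- are standard and largely already in place, so I expect the work to be essentially bookkeeping: fixing the compatible system of theta structures through $\delta$ and through the diagonal $A\hookrightarrow A^2$ so that each $\gamma_{ab}^{cd}$ really is a theta null value, carrying the index shift $x\mapsto x-\tau+t$ and the character weight $A(2t')$ correctly through both Fourier steps, and confirming that $\lambda$ is insensitive to the choice of $(x,y,u,v)$ --- in particular to the choice of $\tau$ solving $2\tau=x-y-u-v$, where a stray sign could otherwise enter.
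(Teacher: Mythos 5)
Your proposal is correct and follows essentially the route the paper itself relies on: the paper offers no argument for Theorem \ref{classicalbis} beyond observing that over $\C$ it is Igusa's classical formula and that the null-point analogue (Theorem \ref{classical}) follows from ``the same computations as Mumford, pp.~334'', and your sketch --- the isogeny $(u,v)\mapsto(u+v,u-v)$ with Lemma \ref{isso} for $i=j=2^\nu$, $m=-n=1$, the multiplication formula of Section \ref{genmult}, then the two character Fourier transforms over $Z_{2^{\nu-1}}$ and $Z_2$ --- is precisely that standard derivation carried out algebraically. The residual bookkeeping you flag (fixing the compatible system of theta structures through $\delta$, tracking the shift by $\tau$ and the weight $A(2t')$, and checking that $\lambda$ depends only on the $\xi_X$ and the theta structures) is exactly what the cited computations supply.
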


If $k=\C$, this last formula is exactly~\cite[pp.141]{MR48:3972}.

\subsection{Proof of Theorem \ref{multiplicity}}\label{theproof}
In this section, we denote by $\F_q$ a finite field of
characteristic $p>2$ with $q$ elements. Let $A$ be an ordinary
abelian variety over $\F_q$ and let $\pol$ be an ample symmetric line
bundle of degree $1$ on $A$. Let $\ell$ be an odd prime number
and suppose that we are given a theta structure $\Theta_{2^\nu}$ for
$\pol^{{2^\nu}}$. We denote the theta null point with respect to the theta
structure $\Theta_{{2^\nu}}$ by $(a_u)_{u \in Z_{{2^\nu}}}$. We suppose that 
$(a_u)_{u \in Z_{{2^\nu}}}$ is defined over $\F_q$.

In the following $Z_{2^\nu}$ is considered as a subgroup of $Z_{{2^\nu}\ell}$ via
the map $j \mapsto \ell j$.  Let $I$ be the ideal of the multivariate
polynomial ring $\F_q[x_u|u \in Z_{{2^\nu}\ell}]$ which is spanned by the
relations of Theorem \ref{riemannrelations}, taken modulo $p$,
together with the symmetry relations $x_u = x_{-u}$ for all $u \in
Z_{{2^\nu}\ell}$.  Let $J$ be the image of $I$ under the specialization map
\begin{eqnarray*}
\F_q[x_u|u \in
Z_{{2^\nu}\ell}] \rightarrow \F_q[x_u|u \in
Z_{{2^\nu}\ell},{2^\nu}u \not=0], \quad x_u \mapsto \left\{
\begin{array}{l@{,\hsp}l}
a_u & u \in Z_{2^\nu} \\
x_u & \mathrm{else}
\end{array} \right. .
\end{eqnarray*}

We want to prove that if $\nu \geq 2$, the ideal $J$ defines a
$0$-dimensional affine algebraic set.

\begin{remark}
  In the case that $\nu \geq 3$ and $\ell$ is prime to $p$, the
  preceding theorem can be proved using the general description of the
  moduli space of abelian varieties with a theta marking given in
 ~\cite{MR36:2621}. But this general description is not available
  under the hypothesis that we consider.  It should also be remarked
  that the variety defined by $J$ when $\ell$ is equal to the
  characteristic of $\F_q$ is singular so that it is not possible to
  lift to the $p$-adics to recover the situation where $\ell$ is
  different from $p$. In the following we present a proof which is
  valid both in the situation where $\ell$ is equal to or different
  from the characteristic of $\F_q$.
\end{remark}

Denote by $J'$ the ideal of $\F_q[x_u | u \in Z_{{2^\nu}\ell}]$ generated by
$J$ and elements $x_u -a_u$ for all $u \in Z_{2^\nu}$.  Denote by
$V_{J'}$ the closed sub-variety of the affine space of dimension
$(2^\nu \ell)^g$, $\xa^{({2^\nu}\ell)^g}$ defined by
$J'$. We want to show that $V_{J'}$ is a $0$-dimensional variety.

We recall that the data of $\Theta_{2^\nu}$ gives a basis $(\theta_u)_{u \in
  Z_{2^\nu}}$ of the global sections of $\pol^{2^\nu}$ on $A$ and as a
consequence an embedding of $A$ in $\proj_{\F_q}^{{2^\nu}^g-1}$.  If
we denote by $V_{I_{\Theta_{2^\nu}}}$ the projective variety defined by the
homogeneous ideal $I_{\Theta_{2^\nu}}$, $A$ is isomorphic to
$V_{I_{\Theta_{2^\nu}}}$ as an abelian variety~\cite[$\S$4]{MR34:4269}.

The idea of the proof of the Theorem \ref{multiplicity} is to
interpret the solutions of $J'$ as closed points in the variety
$A=V_{I_{\Theta_{2^\nu}}}$ and then to show that these points are
$\ell$-torsion points of $A$.  This is exactly the content of Lemma
\ref{lemmaI} and Lemma \ref{lemmaII}.

Let $\pi : Z_{2^\nu} \times Z_\ell \rightarrow Z_{{2^\nu}\ell}$ and
$\pi' : Z_{2^{\nu+1}} \times Z_\ell \rightarrow Z_{{2^{\nu+1}}\ell}$
be the isomorphisms deduced from the Chinese reminder theorem.
\begin{lemma}\label{lemmaI}
  Suppose that $(c_v)_{v \in Z_{{2^\nu}\ell}}$ is a closed point of
  $V_{J'}$.  For any $i \in Z_\ell$ let $P_i$ be the closed point of
  $\proj_{\F_q}^{{2^\nu}^g-1}$ with homogeneous coordinates
  $(c_{\pi(k,i)})_{k \in Z_{2^\nu}}$. For all $i \in Z_\ell$, $P_i$ is a
  closed point of $V_{I_{\Theta_{2^\nu}}}$.
\end{lemma}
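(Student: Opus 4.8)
The plan is to verify that, for each $i \in Z_\ell$, the coordinate vector $(c_{\pi(k,i)})_{k \in Z_{2^\nu}}$ of $P_i$ satisfies every homogeneous generator of $I_{\Theta_{2^\nu}}$, i.e.\ every Riemann quadric of Theorem~\ref{riemannquad} specialized to level $2^\nu$ (with the sections $\theta_u$ replaced by indeterminates $x_u$). The case $i = 0$ is immediate: $(c_{\pi(k,0)})_{k} = (a_k)_{k \in Z_{2^\nu}}$ is the level-$2^\nu$ theta null point, hence the point $O$ of $A \cong V_{I_{\Theta_{2^\nu}}}$. So from now on fix $i \neq 0$.

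The mechanism is a Chinese remainder splitting of the level-$2^\nu\ell$ Riemann equations underlying $J$. Fix a generator of $I_{\Theta_{2^\nu}}$; it is given by a character $l \in \hat{Z}_2$ and by $x,y,u,v \in Z_{2^{\nu+1}}$ that are mutually congruent modulo $Z_{2^\nu}$, with $x \pm y$, $x \pm u$ the ``theta'' arguments and $u \pm v$, $y \pm v$ the ``null'' arguments. I would lift this datum via $\pi'$ to level $2^{\nu+1}\ell$ by putting $\tilde{x} = \pi'(x,i)$ and $\tilde{y} = \pi'(y,0)$, $\tilde{u} = \pi'(u,0)$, $\tilde{v} = \pi'(v,0)$, keeping the same $l$. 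Since $\ell$ is odd, multiplication by $2$ is invertible on the $Z_\ell$-factor, so the subgroup $Z_{2^\nu\ell} \subseteq Z_{2^{\nu+1}\ell}$ is $2Z_{2^{\nu+1}} \times Z_\ell$ in Chinese remainder coordinates; hence $\tilde{x},\tilde{y},\tilde{u},\tilde{v}$ are mutually congruent modulo $Z_{2^\nu\ell}$ precisely because $x,y,u,v$ are mutually congruent modulo $Z_{2^\nu}$. Thus $(\tilde{x},\tilde{y},\tilde{u},\tilde{v},l)$ is an admissible datum for Theorem~\ref{riemannquad} at level $2^\nu\ell$; the corresponding Riemann equation, evaluated at $O$, is in the form~(\ref{eqri1}) one of the equivalent presentations of the relations spanning $I$ (Section~\ref{riemanntwop1}), and is therefore satisfied by the closed point $(c_v)_{v \in Z_{2^\nu\ell}}$ of $V_{J'}$.

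It then remains to recognize that equation. By construction every ``theta'' argument $\tilde{x} \pm \tilde{y} + t$, $\tilde{x} \pm \tilde{u} + t$ (with $t \in Z_2$) has $Z_\ell$-component $i \neq 0$, hence does not lie in $Z_{2^\nu}$, so the specialization map defining $J$ leaves the indeterminate in place and its value at $(c_v)$ is exactly the coordinate of $P_i$ indexed by the $Z_{2^\nu}$-part of that argument; every ``null'' argument $\tilde{u} \pm \tilde{v} + t$, $\tilde{y} \pm \tilde{v} + t$ has $Z_\ell$-component $0$, hence lies in $Z_{2^\nu}$, so the specialization map replaces it by the corresponding entry of the fixed level-$2^\nu$ null point $(a_k)$. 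Since the $Z_{2^\nu}$-parts of these arguments are exactly the arguments of the original level-$2^\nu$ generator, the relation satisfied by $(c_v)$ is precisely that generator with the indeterminates $x_k$ set to the coordinates of $P_i$. As the level-$2^{\nu+1}$ datum was arbitrary, $P_i$ satisfies all of $I_{\Theta_{2^\nu}}$, i.e.\ $P_i$ is a closed point of $V_{I_{\Theta_{2^\nu}}}$.

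The main difficulty I anticipate is purely clerical: keeping straight the three levels $2^\nu$, $2^{\nu+1}$, $2^\nu\ell$ and the several fixed embeddings among $Z_2$, $Z_{2^\nu}$, $Z_{2^{\nu+1}}$, $Z_{2^\nu\ell}$, $Z_{2^{\nu+1}\ell}$, and checking that the divisibility side conditions of~(\ref{eqri1}) survive the lift --- all of which go through because $\ell$ is odd, which is exactly what makes the Chinese remainder factorization compatible with the $\{v+w,\,v-w\}$ structure of Riemann's equations and lets an arbitrary level-$2^{\nu+1}$ datum be lifted. One should also note, where $P_i$ is taken to be a genuine point of $\proj_{\F_q}^{2^{\nu g}-1}$, that the vector $(c_{\pi(k,i)})_{k}$ is nonzero; this is automatic for $i = 0$, and for the points of $V_{J'}$ relevant to Theorem~\ref{multiplicity} the point $P_i$ turns out to be the $\ell$-torsion point of $A$ singled out by the \'etale trivialization.
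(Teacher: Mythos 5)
Your proposal is correct and follows essentially the same route as the paper's proof: for $i\neq 0$ you lift each level-$2^\nu$ Riemann datum $x,y,u,v\in Z_{2^{\nu+1}}$ to level $2^{\nu+1}\ell$ via the Chinese remainder map with $Z_\ell$-components $(i,0,0,0)$, use that the closed point $(c_v)$ of $V_{J'}$ satisfies the resulting level-$2^\nu\ell$ relation, and observe that the specialization sends the component-$0$ arguments to the known null values $a_k$ and leaves the component-$i$ arguments as coordinates of $P_i$, which is exactly the argument in the paper. Your additional remarks (oddness of $\ell$ making the CRT splitting compatible, and nonvanishing of $(c_{\pi(k,i)})_k$ so that $P_i$ is a genuine projective point) are sound refinements of details the paper leaves implicit.
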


\begin{proof}
  It is enough to verify that for all $i \in Z_\ell$,
  $(c_{\pi(k,i)})_{k \in Z_{2^\nu}}$ satisfy the equations provided by the
  elements of $I_{\Theta_{2^\nu}}$.  For $i=0$ this is an immediate
  consequence of the hypothesis that $(a_k)_{k\in Z_{2^\nu}}$ is the theta
  null point associated to $\Theta_{2^\nu}$ and that by definition of $J'$,
  $a_k = c_{\pi(k,0)}$ for all $k \in Z_{2^\nu}$.

  Let $x,y,u,v \in Z_{2^{\nu+1}}$ which are congruent modulo
  $Z_{2^\nu}$. For any $i \in Z_\ell -\{ 0 \}$, we remark that $\pi(x,i),
  \pi(y,0), \pi(u,0), \pi(v,0) \in Z_{2^{\nu+1} \ell}$ are congruent modulo
  $Z_{{2^\nu} \ell}$. By definition of $I$ and the relations of
  Theorem \ref{riemannrelations}, $(c_{\pi(k,i)})_{k \in Z_{2^\nu}}$
  satisfy the relation
\begin{eqnarray*}
\big(\sum_{t \in Z_2} l(t) c_{\pi(x+y,i)+t} c_{\pi(x-y,i)+t}\big).\big(\sum_{t \in Z_2} l(t)
c_{\pi(u+v,0)+t} c_{\pi(u-v,0)+t}\big)= \\
=\big(\sum_{t \in Z_2} l(t) c_{\pi(x+u,i)+t} c_{\pi(x-u,i)+t}\big).\big(\sum_{t \in Z_2} l(t)
c_{\pi(y+v,0)+t} c_{\pi(y-v,0)+t}\big), 
\end{eqnarray*}
for all $l \in \hat{Z}_2$.

Taking care of the fact that $c_{\pi(k,0)}=a_k$ for all $k \in
Z_{2^\nu}$, we deduce that the point with homogeneous coordinates
$(c_{\pi(k,i)})_{k \in Z_{2^\nu}}$ satisfy all the relations of Theorem
\ref{riemannquad} and as a consequence is a closed point of
$V_{I_{\Theta_{2^\nu}}}$.
\end{proof}
Let $(c_v)_{v \in Z_{{2^\nu}\ell}}$ be a closed point of $V_{J'}$. Applying
Lemma \ref{lemmaI}, we denote by $P_i$ the closed point of
$V_{I_{\Theta_{2^\nu}}}$ with homogeneous coordinates $(c_{\pi(k,i)})_{k\in
  Z_{2^\nu}}$.

\begin{lemma}\label{lemmaII}
  The closed point $P_1$ is a $\ell$-torsion point of $A$.
  Moreover the application $\phi$ from the set of geometric points of
  $V_{J'}$ to $(\overline{\F}_q)^\ell$ defined by $\phi :
  \overline{\F}_q^{{2^\nu}\ell} \rightarrow \overline{\F}_q^{\ell}$,
  $(c_{j})_{j\in Z_{{2^\nu}\ell}} \mapsto (c_{\pi(k,1)})_{k \in Z_{2^\nu}}$ is
  injective.
\end{lemma}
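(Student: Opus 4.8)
The plan is to show that the $\ell$ points $P_0,\ldots,P_{\ell-1}\in A$ attached to a geometric point $(c_v)_{v\in Z_{2^\nu\ell}}$ of $V_{J'}$ by Lemma \ref{lemmaI} form the cyclic subgroup generated by $P_1$, and that the whole tuple $(c_v)_{v\in Z_{2^\nu\ell}}$ is reconstructed from $P_1$ alone. I would begin by identifying $P_0$: by construction of $J'$ one has $c_{\pi(k,0)}=a_k$ for all $k\in Z_{2^\nu}$, and $\pi(\cdot,0)$ is the inclusion $Z_{2^\nu}\hookrightarrow Z_{2^\nu\ell}$, so $P_0$ has homogeneous coordinates $(a_k)_{k\in Z_{2^\nu}}$; that is, $P_0$ is the image of $0_A$ under the projective embedding $A\cong V_{I_{\Theta_{2^\nu}}}$ attached to $\Theta_{2^\nu}$. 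Hence $P_0=0_A$.

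The core step is to read the group law off the Riemann relations generating $I$, hence $J'$. Under the Chinese remainder isomorphism $\pi\colon Z_{2^\nu}\times Z_\ell\iso Z_{2^\nu\ell}$, an equivalence of quadruples in $Z_{2^\nu\ell}^4$ amounts to an equivalence of the $Z_{2^\nu}$-parts together with an equivalence of the $Z_\ell$-parts; choosing the $Z_\ell$-parts so as to realize the addition $i+j$ in $Z_\ell$ turns the corresponding relations of Theorem \ref{riemannrelations} into exactly the classical Riemann addition formulas, in the form of Theorem \ref{riemannquad} and its reformulations Theorems \ref{classical} and \ref{classicalbis}, which express the level-$2^\nu$ coordinates of the sum $P_i\star P_j$ in $A$ in terms of those of $P_i$, of $P_j$, of $P_{i-j}$, and of the theta null point $(a_k)$ (the coordinates of $P_0=0_A$). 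Since for $\nu\geq 2$ the level-$2^\nu$ Riemann relations cut out $A$ in $\proj_{\F_q}^{2^{\nu g}-1}$ \cite[$\S$4]{MR34:4269}, this data determines $P_i\star P_j$ uniquely, so the assignment $i\mapsto P_i$ is additive. Combined with $P_0=0_A$ an induction gives $P_i=[i]P_1$ for all $i\in Z_\ell$; applying this with the index running up to $\ell$, where $\pi(k,\ell)=\pi(k,0)$ and therefore $P_\ell=P_0=0_A$, yields $[\ell]P_1=0_A$, i.e. $P_1\in A[\ell]$.

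The same relations yield the injectivity of $\phi$. Fix $P_0=0_A$ with its prescribed coordinates $(a_k)$ and assume the slice $(c_{\pi(k,1)})_{k\in Z_{2^\nu}}$ (the datum $\phi((c_v)_v)$) is known. In the relation of Theorem \ref{classicalbis} applied with $P=P_i$, $Q=P_j$, the slice of index $i+j$ enters to the first power while the other factors come from the slices of index $i$, $j$, $i-j$ and $0$; taking successively $(i,j)=(1,1),(2,1),(3,1),\ldots$ one recovers $(c_{\pi(k,2)})_k$, then $(c_{\pi(k,3)})_k$, and inductively every slice $(c_{\pi(k,i)})_k$ by solving a linear system, with no scaling freedom since the slices of index $0$ and $1$ are already pinned down (the wrap-around $\pi(k,\ell)=\pi(k,0)$ contributes only the consistency condition that re-derives $[\ell]P_1=0$). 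Hence $(c_v)_{v\in Z_{2^\nu\ell}}$ is determined by $\phi((c_v)_v)$, so $\phi$ is injective.

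I expect the main obstacle to be the core step: tracking the $Z_\ell$-components through Mumford's Riemann relations carefully enough to see that the relations occurring in $J'$ are genuinely the addition formulas for $A$ with matching normalizations, and then invoking the completeness of the level-$2^\nu$ equations — valid precisely because $\nu\geq 2$ — to conclude that they pin down $P_i\star P_j$ on the nose rather than only up to a scalar.
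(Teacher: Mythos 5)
Your proposal is correct and takes essentially the same route as the paper's proof: there too one identifies $P_0$ with $0_A$, reads the relations of $J'$ through the Chinese remainder splitting as the Riemann addition formulas (Theorems \ref{classical} and \ref{classicalbis}) linking the slice of index $i$ to the slices of index $i-1$, $i-2$, $1$ and $0$, and inducts on $i$ to get $P_i=i.P_1$, whence $\ell P_1=P_\ell=P_0=0_A$. The same induction, with a nonvanishing theta constant $\vartheta_{u_0}(0,\xi_0)\neq 0$ chosen so that the linear step is solvable and with the normalization pinned by the slices of index $0$ and $1$, shows each slice is uniquely determined by $(c_{\pi(k,1)})_k$, which is exactly your injectivity argument.
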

\begin{proof}
  We are going to prove inductively on $i\in {1, \ldots \ell}$ that on
  the abelian variety $V_{I_{\Theta_{2^\nu}}}$ the point $i.P_1$ is
  equal to the point $P_i$. Applying this result for $i=\ell$, we
  obtain that $\ell P_1 = P_\ell = P_0$ and $P_0$ is the $0$ point of
  $A$ which means that $P_1$ is a $\ell$-torsion point of $A$.

  The induction hypothesis is clear for $i=1$. Let $(\theta_u)_{u\in
    Z_{2^\nu}}$ be the basis of global section of $\pol^{2^\nu}$ defined by
  $\Theta_{2^\nu}$. We suppose that for all $1 \leq j \leq i-1$, there
  exists an isomorphism $\xi_{j.P_1}: \pol^{2^\nu}_{j.P_1} \simeq
  \mathscr{O}_{A,j.P_1}$ such that $(\theta_k(j.P_1,
  \xi_{j.P_1}))_{k \in Z_{2^\nu}}=(c_{\pi(k,j)})_{k \in Z_{2^\nu}}$.  We have to
  prove that there exists an isomorphism $\xi_{i.P_1}: \pol^{2^\nu}_{i.P_1} \simeq
  \mathscr{O}_{A,i.P_1}$ such that $(\theta_k(i.P_1,
  \xi_{i.P_1}))_{k \in Z_{2^\nu}}=(c_{\pi(k,i)})_{k \in Z_{2^\nu}}$.

  Let $H_{2^\nu}=Z_{{2^\nu}} \times \hat{Z}_{2^{\nu-1}}$.  For all
  $x=(x',x'') \in H_{2^\nu}$, we let $$\vartheta_x = \sum_{t \in Z_{2^{\nu-1}}}
  x''(t) \theta_{x'+t}$$. Let $(x,y,u,v) \in H_{2^\nu}$ and
  $\tau=(\tau', \tau'') \in H_{2^\nu}$ such that $2\tau =x-y-u-v$. By
  the induction hypothesis, for $X\in \{(i-1)P, P, (i-2)P, 0 \}$, we
  have already a well defined isomorphisms $\pol_X^{{2^\nu}} \simeq
  \mathscr{O}_{A,X}$. We choose any isomorphism $\xi_{i.P_1}:
  \pol^{2^\nu}_{i.P_1} \simeq \mathscr{O}_{A,i.P_1}$.

  By applying Theorem \ref{classicalbis}, we deduce a relation
\begin{equation}\label{eq1}
\begin{split}
  \vartheta_{x}(i.P,\xi_{i.P})\vartheta_{y}((i-2).P,\xi_{(i-2).P}) \vartheta_{u}(0,\xi_0)
  \vartheta_{v}(0,\xi_0) =\\
=  \lambda \frac{1}{2^g} \sum_{t \in H_2} A(2t')
  \vartheta_{x-\tau+t}((i-1).P,
  \xi_{(i-1).P}) \vartheta_{y+\tau+t}((i-1).P,\xi_{(i-1).P}) \\
  \vartheta_{u+\tau+t}(P, \xi_P)
  \vartheta_{v+\tau+t}(P, \xi_P),
\end{split}
\end{equation}
where $t=(t',t'')$, $A=\tau''+t''$ and 
$\lambda \in \overline{\F}^*_q$ does not depend on the choice of
$(x,y,u,v) \in H_{2^\nu}$.

On the other side, denote by $H_{{2^\nu} \ell}=Z_{{2^\nu} \ell} \times
\hat{Z}_{2^{\nu-1}}$.  In the following we identify $H_{{2^\nu} \ell}$
with the Cartesian product $H_{2^\nu} \times Z_{\ell}$. For all
$x=(x',x'') \in H_{{2^\nu}\ell}$, we let $d_x = \sum_{t \in Z_2}
x''(t) c_{x'+t}$. Set $x_1=(x,i), y_1=(y,i-2), u_1=(u,0), v_1=(v,0)$.
Let $\tau \in H_{{2^\nu}}$ be such that $2\tau = x -y - u - v$ and
$\tau_1 = \tau \times \{ 1 \} \in H_{{2^\nu} \ell}$. We remark that $2
\tau_1 = x_1 -y_1 -u_1-v_1$ and by applying Theorem \ref{classical},
we get a relation deduced from the definition of $I$
\begin{eqnarray}\label{eq2}
  d_{x_1}d_{y_1} d_{u_1}d_{v_1} =
  \frac{1}{2^g} \sum_{t \in H_2} (\tau'' + t'')(2t')
  d_{x_1 - \tau_1 + t} d_{y_1 + \tau_1 +t} d_{u_1+ \tau_1+t} d_{v_1+
    \tau_1 +t}.
\end{eqnarray}
where $t=(t',t'') \in H_2$.

By the recurrence hypothesis and by the construction of the quadruples
$(x_1, y_1, u_1, v_1)$, we have for all $t\in H_2$, $d_{x_1 -\tau_1+t}
= \vartheta_{x-\tau+t}((i-1).P,\xi_{(i-1).P})$, $d_{y_1+\tau+t} =
\vartheta_{y+\tau+t}((i-1).P,\xi_{(i-1).P})$, $d_{u_1+\tau_1+t} =
\vartheta_{u+\tau+t}(P,\xi_P)$, $d_{v_1+\tau_1+t}
=\vartheta_{v+\tau+t}(P,\xi_P)$. In the same way, on the left hand
side of (\ref{eq2}), we have $d_{y_1} =
\vartheta_{y}((i-2).P,\xi_{(i-2).P})$, $d_{u_1} =
\vartheta_{u}(0,\xi_0)$ and $d_{v_1}=\vartheta_{v}(0,\xi_0)$.

There exists $u_0 \in H_{2^{\nu}}$ such that $\vartheta_{u_0}(0,
\xi_0)\neq 0$. We can take $u=v=u_0$ in Equations (\ref{eq1}) and
(\ref{eq2}) and we deduce immediately that
$d_{x_1}=\vartheta_{x}(i.P,\xi_{i.P})$. By taking all possible values
of $x_1''$ in $x_1=(x_1',x_1'')$, we obtain that
\begin{itemize}
\item for all $k \in Z_{2^\nu}$, $c_{\pi(k,i)}$ is uniquely determined from the knowledge of
  $c_{\pi(k,j)}$ for $j \leq i-1$;
\item 
for all $k \in Z_{2^\nu}$,
$c_{\pi(k,i)}=\theta_k(i.P,\xi_{i.P})$ modulo multiplication by a constant factor
independent of $k$ that we normalise to $1$ by choosing a certain $\xi_{i.P}$.
\end{itemize}
\end{proof}

\begin{proof}
  By the preceding lemma, $(c_{\pi(k,1)})$, being the homogeneous
  coordinate of a $\ell$-torsion point of $A$, can only assume
  a finite number of value up to a multiplication by a constant factor
  and the data of $(c_{\pi(k,1)})$ defines a unique solution of the
  system associated to $J'$. In order to finish the proof, we only
  have to show that $J'$ is not a homogeneous ideal but this is
  something clear from the definition.
\end{proof}

\section{Practical implementation and examples}
\label{implementation}

The proved version of the algorithm involve the resolution of
algebraic systems which makes it not suitable for practical
applications.  We have implemented the heuristic version of the
algorithm for the case of genus $1$ and genus $2$~\cite{algo}. For the
genus $2$ implementation, using a special purpose Groebner basis
algorithm it is possible to solve easily the algebraic system of the
initialisation phase.

\paragraph{A genus 1 characteristic 5 example.}
Let $\F_{5^8}$ be represented by the quotient $\F_5[X]/(P)$ where
$P(X)= X^8 + X^4 + 3X^2 + 4X + 2$ and let $u$ be the image of $X$ in
$\F_{5^8}$ via the above isomorphism.
Let $E$ be the ordinary elliptic curve given by the Weierstrass
equation $$y^2 = x^3 + x^2 + 3x.$$
After the initialisation phase we obtain the following six theta
constants
{\tiny
$$[ 1, 4, u^{32552}, u^{309244}, u^{211588}, u^{32552} ].$$}
We consider $\Z_{5^8}$ given as the unramified extension of the
$5$-adic integers $\Z_5$ defined by the integer polynomial $X^8 + X^4 + 3X^2 + 4X + 2$ and denote
by $z$ the image of $X$ in $\Z_{5^8}$. 
After the lift phase we get the following lifted theta constants to
precision $5$
{\tiny
\begin{eqnarray*}
[1, -1460z^7 - 10z^6 - 785z^5 + 715z^4 - 555z^3 + 420z^2
    - 1035z - 1116, \\
-1449z^7 - 819z^6 + 396z^5 + 746z^4 + 1108z^3 +
    648z^2 + 546z - 1189, \\
1438z^7 - 1497z^6 + 1548z^5 - 777z^4 + 354z^3 -
    876z^2 + 998z + 1029, \\
1449z^7 + 819z^6 - 396z^5 - 746z^4 - 1108z^3 -
    648z^2 - 546z - 868, \\
-1504z^7 + 101z^6 + 741z^5 + 591z^4 - 957z^3 -
    492z^2 - 1109z - 834] \\
\end{eqnarray*}}
where $z$ is a generator 

After the norm phase we obtain $1054$ as the number of rational points
on $E$.

\paragraph{A genus $2$ characteristic $3$ example}
Let $\F_{3^{28}}$ be represented by the quotient $\F_3[X]/(P)$ where
{\tiny $$P(X)= X^{28} + 2X^{14} + X^{13} + X^{12} + 2X^{11} + X^{10} + X^9 + X^8 +
    2X^6 + 2X^4 + X^3 + 2$$}
and let $w$ be the image of $X$ in
$\F_{5^8}$ via this isomorphism.  
Let $H$ be the ordinary hyperelliptic curve given by the affine equation
{\tiny
\begin{eqnarray*}
y^2 &=  & x^6 + (w^{18} + w^{17} + w^{16} + w^{11} + w^{10} +
    w^9 + w^8 + w^7 + 2w^5 + 2w^2 + w)x^5 \\
&& + (w^{19} + 2w^{17} + 2w^{16} + w^{13} +
    w^{11} + w^{10} + 2w^8 + 2w^7 + w^6 + 2w^4 + w + 2)x^4 \\
&& + (2w^{19} + 2w^{18} +
    2w^{17} + 2w^{15} + 2w^{14} + 2w^{12} + 2w^{11} + 2w^{10} + 2w^9 +
    w^7 + 2w^6 \\
&& \hspace{0.5cm} + w^5 + 2w^4 + w^3 + w + 1)x^3 \\
& & + (w^{19} + 2w^{18} + 2w^{16} + 2w^{13} + w^{12} +
    w^{10} + 2w^9 + w^8 + w^6 + 2w^2 + 1)x^2  \\
& &+ (w^{19} + 2w^{18} + w^{17} + 2w^{15} +
    2w^{14} + w^{13} + w^{12} + w^{11} + 2w^9 + w^8 + w^6 \\
&& \hspace{0.5cm} + 2w^5 + 2w^4 + w^3 +
    2w^2 + 2)x \\
& & + w^{19} + 2w^{16} + w^{15} + w^{14} + w^{12} + 2w^8 + w^7 + w^6 + w^4
    + 2w^3 + w^2 + w + 1
\end{eqnarray*}}
First, we compute the following level $2$ theta constants
{\tiny
\begin{eqnarray*}
  x_{00} &= &  w^{19} + w^{18} + 2*w^{15} + w^{14} + w^{12} + 2w^{10} + w^7 + 2w^6 + 2w^5 +
        2w^4 + w^3 + w + 2 \\
  x_{03} & = &  w^{19} + 2w^{17} + 2w^{16} + 2w^{15} + 2w^{14} + 2w^{13} + w^{12} + 2w^{11} +
        2w^{10} + 2w^9 + 2w^8 + w^7 + \\ & & w^6 + 2w^5 + w^4 + w^3 + 2w^2
        + 2w + 2 \\
  x_{30} & = &  w^{19} + 2w^{18} + w^{17} + w^{16} + 2w^{15} + 2w^{14} + 2w^{13} + w^{12} +
        2w^{11} + 2w^{10} + 2w^9 \\ && + 2w^7 + 2w^3 + w^2 + 2 \\
  x_{33} & = &  2w^{19} + 2w^{18} + w^{17} + 2w^{15} + 2w^{13} + 2w^{12} + w^{10} + 2w^9 +
        w^8 + w^6 + 2w^4 + 2w^3 + w^2 + 2w + 1.
\end{eqnarray*}
}
After the Groebner basis step, we obtain the following list of theta
constants
{\tiny
\begin{eqnarray*}
  0 & = & x_{01}+w^{18}+w^{16}+w^{15}+2w^9+w^8+w^7+w^6+2w^5+w^4+2 \\
  0 & = &
  x_{02}+w^{19}+2w^{17}+2w^{16}+2w^{15}+w^{14}+w^{13}+w^{12}+w^{11}+w^{10}+w^9+\\ & & w^7+2w^5+w^4+w^3+w^2+w+2\\
  0 & = & x_{10} +
  2w^{19}+2w^{18}+w^{17}+2w^{14}+2w^{13}+2w^{12}+w^{11}+w^{10}+w^9+2w^8\\ && +2w^6+2w^4+w^3+2w^2+2 \\
  0 & = &x_{11} +
  2w^{19}+w^{16}+w^{15}+2w^{14}+2w^{12}+2w^{11}+2w^{10}+2w^9+w^7+w^5
  \\ && +w^4+w^3+2w^2+2w+2 \\
  0 & = &
  x_{12}+w^{19}+2w^{18}+2w^{17}+w^{16}+2w^{15}+w^{14}+w^{13}+w^{12}+2w^{10}+2w^9+w^8+\\ & & 2w^7+2w^6+w^4+2w^3+2w^2+2w+1 \\
  0 & = & x_{13} +w^{18}+w^{17}+2w^{14}+2w^{13}+w^9+2w^6+2w^5+1 \\
  0 & = &
  x_{20}+w^{19}+w^{18}+2w^{16}+w^{15}+w^{14}+w^{13}+w^{12}+w^{11}+2w^{10}+w^9+2w^7 \\ && +2w^6+w^4+w^3+w+2 \\ 
  0 & = & x_{21}+w^{19}+w^{17}+w^{16}+w^{15}+2w^{14}+2w^{12}+w^{10}+w^5+w^3+w^2+w+2\\
  0 & = &
  x_{22}+2w^{19}+w^{17}+2w^{16}+2w^{15}+w^{13}+w^{12}+2w^{11}+2w^{10}
  \\ && +2w^9+w^8+2w^7+2w^5+w^4+w^2+w+1 \\
  0 & = &  x_{23}+w^{18}+2w^{14}+w^{12}+2w^{11}+2w^{10}+w^8+w^6+w^5+w^2+w+1 \\
  0 & = & x_{31}+w^{18}+w^{17}+w^{16}+2w^{15}+2w^{13}+2w^{11}+w^9+w^8+w^7+2w^4+2w^3+2w^2+2\\
  0 & = & x_{32}+2w^{19}+2w^{18}+2w^{17}+2w^{16}+w^{15}+2w^{14}+w^{13}+w^{12}+w^{11}+w^9+w^7+w^6+2w^2+w\\
  0 & = & x_{41} +w^{18}+2w^{16}+2w^{15}+2w^{13}+w^{12}+w^{11}+w^{10}+2w^9+w^8+w^7+2w^6+w^4+2w\\
  0 & = & x_{42}+2w^{16}+2w^{14}+2w^{12}+2w^{10}+w^9+2w^8+2w^6+2w^5+2w^4+w^3+w^2+w+2\\
  0 & = &
  x_{51}+2w^{18}+w^{17}+2w^{16}+2w^{15}+2w^{13}+w^{11}+w^{10}+w^9+2w^8+w^7+2w^6+\\ & & 2w^5+2w^4+2w^3+w^2+2w\\
  0 & = & x_{52}+2w^{17}+2w^{16}+2w^{15}+w^{14}+2w^{12}+w^{10}+2w^9+2w^7+w^6+w^5+2w^4+w^3+2w^2+w\\
\end{eqnarray*}}
After the norm phase, we obtain as a product of the Eigenvalues of the
Frobenius morphism which are units modulo $3$ the number
{\tiny
$$202395421016914130938488532$$}
to precision $56$.
From here, we can recover the polynomial $\chi_F$ which is
{\scriptsize
\begin{eqnarray*}
\chi_F(X)= X^4 + 19612X^3 - 4108934426X^2 + 68382815672412X +
    12157665459056928801.
\end{eqnarray*}}

\section*{Conclusion}
We have given an algorithm with quasi-quadratic time and quadratic
space complexity with respect to the size of the base field to compute
the number of points of a hyperelliptic curve whose Jacobian is
ordinary and absolutely simple.

In fact, we have given two versions of our algorithm, one with proved
complexity bound and a bad practical behaviour and a heuristic one
which behaves very well in practice.

\newcommand{\etalchar}[1]{$^{#1}$}

\end{document}